\documentclass[a4paper,11pt]{amsart}

\usepackage{a4wide}

\usepackage{amssymb}
\usepackage{mathrsfs}

\renewcommand{\Re}{\operatorname{Re}}

\newcommand{\eq}{:=}


\newcommand{\grad}{\boldsymbol \nabla}
\renewcommand{\div}{\grad \cdot}
\newcommand{\curl}{\grad \times}

\newcommand{\ccurl}{\boldsymbol{\operatorname{curl}}}
\newcommand{\ddiv}{\operatorname{div}}
\newcommand{\ggrad}{\boldsymbol{\operatorname{grad}}}

\newcommand{\jmp}[1]{\,[\![#1]\!]}


\newcommand{\BE}{\boldsymbol E}

\newcommand{\BH}{\boldsymbol H}
\newcommand{\BI}{\boldsymbol I}
\newcommand{\BJ}{\boldsymbol J}
\newcommand{\BK}{\boldsymbol K}
\newcommand{\BL}{\boldsymbol L}

\newcommand{\BW}{\boldsymbol W}
\newcommand{\BX}{\boldsymbol X}

\newcommand{\ba}{\boldsymbol a}

\newcommand{\bd}{\boldsymbol d}
\newcommand{\be}{\boldsymbol e}

\newcommand{\bh}{\boldsymbol h}

\newcommand{\bj}{\boldsymbol j}

\newcommand{\bn}{\boldsymbol n}
\newcommand{\bo}{\boldsymbol o}
\newcommand{\bp}{\boldsymbol p}

\newcommand{\br}{\boldsymbol r}

\newcommand{\bu}{\boldsymbol u}
\newcommand{\bv}{\boldsymbol v}
\newcommand{\bw}{\boldsymbol w}
\newcommand{\bx}{\boldsymbol x}
\newcommand{\by}{\boldsymbol y}


\newcommand{\CF}{\mathcal F}

\newcommand{\CN}{\mathcal N}

\newcommand{\CP}{\mathcal P}
\newcommand{\CQ}{\mathcal Q}
\newcommand{\CR}{\mathcal R}

\newcommand{\CT}{\mathcal T}

\newcommand{\CV}{\mathcal V}


\newcommand{\LC}{\mathscr C}

\newcommand{\LL}{\mathscr L}

\newcommand{\LP}{\mathscr P}


\newcommand{\TI}{\textup I}


\newcommand{\BCN}{\boldsymbol{\CN}}

\newcommand{\BCP}{\boldsymbol{\CP}}

\newcommand{\BCR}{\boldsymbol{\CR}}

\newcommand{\BCT}{\boldsymbol{\CT}}



\newcommand{\BTI}{\pmb{\TI}}

\newcommand{\SlopeTriangle}[6]
{

    \pgfplotsextra
    {
        \pgfkeysgetvalue{/pgfplots/xmin}{\xmin}
        \pgfkeysgetvalue{/pgfplots/xmax}{\xmax}
        \pgfkeysgetvalue{/pgfplots/ymin}{\ymin}
        \pgfkeysgetvalue{/pgfplots/ymax}{\ymax}

        \pgfmathsetmacro{\xArel}{#1}
        \pgfmathsetmacro{\yArel}{#3}
        \pgfmathsetmacro{\xBrel}{#1-#2}
        \pgfmathsetmacro{\yBrel}{\yArel}
        \pgfmathsetmacro{\xCrel}{\xArel}

        \pgfmathsetmacro{\lnxB}{\xmin*(1-(#1-#2))+\xmax*(#1-#2)} 
        \pgfmathsetmacro{\lnxA}{\xmin*(1-#1)+\xmax*#1} 
        \pgfmathsetmacro{\lnyA}{\ymin*(1-#3)+\ymax*#3} 
        \pgfmathsetmacro{\lnyC}{\lnyA+#4*(\lnxA-\lnxB)}
        \pgfmathsetmacro{\yCrel}{\lnyC-\ymin)/(\ymax-\ymin)} 

        \coordinate (A) at (rel axis cs:\xArel,\yArel);
        \coordinate (B) at (rel axis cs:\xBrel,\yBrel);
        \coordinate (C) at (rel axis cs:\xCrel,\yCrel);

        \draw[#6]   (A)-- node[anchor=north] {#5}
                    (B)--
                    (C)--
                    cycle;
    }
}

\usepackage[foot]{amsaddr}

\usepackage{xcolor}

\usepackage{cancel}

\usepackage{todonotes}

\usepackage{hyperref}

\usepackage{subcaption}

\usepackage{tikz}
\usetikzlibrary{patterns}
\usepackage{pgfplots}

\usepackage{algorithm}
\usepackage[noend]{algpseudocode}

\algnewcommand\algorithmicforeach{\textbf{for each}}
\algdef{S}[FOR]{ForEach}[1]{\algorithmicforeach\ #1\ \algorithmicdo}

\newcommand{\bet}{\boldsymbol \eta}

\newcommand{\eps}{\varepsilon}

\newcommand{\ee}{\boldsymbol \eps}

\newcommand{\cc}{\boldsymbol \chi}

\newcommand{\al}{\boldsymbol \alpha}

\newcommand{\divh}{\grad_h \cdot}
\newcommand{\curlh}{\grad_h \times}

\newcommand{\BJe}{\BJ_{\rm e}}
\newcommand{\BKe}{\BK_{\rm e}}

\newcommand{\enorm}[1]{\left |\!\left |\!\left | #1 \right |\!\right |\!\right |}

\newcommand{\OO}{\Omega}
\newcommand{\OOm}{{\OO_{\rm m}}}
\newcommand{\OOo}{{\OO_0}}
\newcommand{\OOp}{{\OO_{\rm p}}}

\newcommand{\cis}{\LC_{\rm i/s}}
\newcommand{\cqi}{\LC_{\rm qi}}
\newcommand{\crd}{\LC_{\rm rd}}
\newcommand{\cb}{\LC_{\rm b}}
\newcommand{\ci}{\LC_{\rm i}}
\newcommand{\ce}{\LC_{\rm e}}

\newtheorem{theorem}{Theorem}
\newtheorem{lemma}[theorem]{Lemma}
\newtheorem{assumption}[theorem]{Assumption}

\numberwithin{theorem}{section}
\numberwithin{equation}{section}

\newcommand{\bthe}{\boldsymbol \theta}
\newcommand{\bphi}{\boldsymbol \phi}

\newcommand{\tbv}{\widetilde{\bv}}
\newcommand{\tbw}{\widetilde{\bw}}

\newcommand{\tK}{\widetilde K}
\newcommand{\tF}{\widetilde F}

\newcommand{\essinf}[1]{\underset{\substack{#1}}{\operatorname{ess} \operatorname{inf}}\;}
\newcommand{\esssup}[1]{\underset{\substack{#1}}{\operatorname{ess} \operatorname{sup}}\;}

\newcommand{\osc}{\operatorname{osc}}

\newcommand{\omegaP}{\omega_{\rm P}}

\newcommand{\kE}{k_{\rm E}}
\newcommand{\kJ}{k_{\rm J}}
\newcommand{\kP}{k_{\rm P}}
\newcommand{\cE}{c_{\rm E}}
\newcommand{\cJ}{c_{\rm J}}
\newcommand{\cP}{c_{\rm P}}

\newcommand{\kEK}{k_{{\rm E},K}}
\newcommand{\kJK}{k_{{\rm J},K}}
\newcommand{\kPK}{k_{{\rm P},K}}
\newcommand{\cEK}{c_{{\rm E},K}}
\newcommand{\cJK}{c_{{\rm J},K}}

\newcommand{\scurl}{\operatorname{curl}}
\newcommand{\vcurl}{\boldsymbol{\operatorname{curl}}}

\title[A posteriori error estimates for non-local Maxwell-Drude equations]%
{A posteriori error estimates for finite element discretizations of time-harmonic
Maxwell's equations coupled with a non-local hydrodynamic Drude model}

\author{T. Chaumont-Frelet$^{\dagger,\ddagger}$}
\author{S. Lanteri$^{\dagger,\ddagger}$}
\author{P. Vega$^\dagger$}

\address{\vspace{-.5cm}}
\address{\noindent \tiny \textup{$^\dagger$Inria, 2004 Route des Lucioles, 06902 Valbonne, France}}
\address{\noindent \tiny \textup{$^\ddagger$Laboratoire J.A. Dieudonn\'e, Parc Valrose, 28 Avenue Valrose, 06108 Nice Cedex 02, 06000 Nice, France}}

\begin{document}	
	
\maketitle	

\begin{abstract}
We consider finite element discretizations of Maxwell's equations coupled
with a non-local hydrodynamic Drude model that accurately accounts for
electron motions in metallic nanostructures. Specifically, we focus on
{\it a posteriori} error estimation and mesh adaptivity, which is of particular
interest since the electromagnetic field usually exhibits strongly localized features
near the interface between metals and their surrounding media. We propose a novel
residual-based error estimator that is shown to be reliable and efficient. We also
present a set of numerical examples where the estimator drives a mesh adaptive process.
These examples highlight the quality of the proposed estimator, and the potential
computational savings offered by mesh adaptivity.

\vspace{.5cm}
\noindent
{\sc Key words.}
A posteriori error estimates;
Finite element methods;
Maxwell's equations;
Non-local hydrodynamic Drude model;
Plasmonics
\end{abstract}

\section{Introduction}

The interaction of light with metallic nanostructures gives rise to so-called
plasmonic waves that are due to collective oscillations of conduction
band electrons in the metal, and typically concentrate at the interface between
the nanostructure and the surrounding medium. These unusual properties allow an
extraordinary level of light manipulation at the nanoscale \cite{maier_2007a},
with applications in waveguiding \cite{oulton_sorger_genov_pile_zhang_2008},
lasing \cite{smalley_vallini_gu_fainman_2016}, near-field scanning microscopy
\cite{novotny_vanhulst_2011}, ultrasensitive sensing
\cite{stewart_anderton_thompson_maria_gray_rogers_nuzzo_2008} and
energy harvesting \cite{brongersma_2016}.

Electromagnetic fields penetrate in noble metals up to 25~nm whatever
the wavelength. Small metallic nanostructures actually contains a plasma, whose
electromagnetic response is in opposition to the incoming field, generating plasmonic
waves. While this effect is negligible when considering large structures, metals cannot
be considered to be perfectly conducting at the nanoscale, and valence electrons have to
be modeled as a gas \cite{maier_2007a}, leading to dispersive material laws.

In this work, we focus on the time-harmonic setting where the electromagnetic field
oscillates in time at a prescribed frequency $\omega > 0$. In this context, the
Drude model \cite{drude_1900} is a fairly simple yet efficient oscillator
model for free electrons in metals. Standard Maxwell's equations are employed
in the metal to describe the propagation of the electric field $\BE$,
\begin{equation}
\label{eq_maxwell_drude_local}
-\omega^2 \varepsilon_{\rm d} \BE + \curl \left (\mu_0^{-1} \curl \BE\right )
=
\bo
\end{equation}
but the permittivity
\begin{equation*}
\varepsilon_{\rm d}
\eq
\left (1 - \frac{\omegaP^2}{i\gamma\omega+\omega^2}\right )
\varepsilon_0
\end{equation*}
becomes a complex-valued function of the frequency, with a negative real part at optical
frequencies. Above, $\varepsilon_0$ and $\mu_0$ respectively denote the vacuum electric
permittivity and magnetic permeability. $\omegaP$ and $\gamma$ are the so-called
``plasma'' and ``collision'' frequencies of the metal under consideration.
Although the Drude model performs well in most cases, it becomes inaccurate when the
size of the considered nanostructure decreases beyond approximately 10~nm.
Then, a possible extension is the so-called non-local hydrodynamic Drude (NHD) model
\cite{raza_bozhevolnyi_wubs_mortensen_2015}, where the electron gas is treated as a fluid.
Compared to the ``local'' Drude model for free electrons, this hydrodynamic approach
accounts for the Fermi velocity $\vartheta_{\rm F}$ of the electrons via an additional
parameter $\beta^2 \eq (3/5)\vartheta_{\rm F}^2$, namely
\begin{equation}
\label{eq_maxwell_drude}
\left \{
\begin{array}{rcl}
-\omega^2 \varepsilon_0 \BE
+
\curl \left (\mu_0^{-1} \curl \BE\right )
+
i\omega \BJ
&=&
\bo,
\\
-\omega^2 \BJ - i\omega\gamma\BJ - \grad \left (\beta^2 \div \BJ\right )
-i\omega \omega_{\rm P}^2 \varepsilon_0 \BE
&=&
\bo,
\end{array}
\right .
\end{equation}
where the motion of the electrons is now explicitly modeled through
the velocity field $\BJ$. Notice that setting $\beta \eq 0$,
\eqref{eq_maxwell_drude} reduces to \eqref{eq_maxwell_drude_local}.
%
%
Thanks to its relatively
simple form and the successful interpretation of observable non-local
effects \cite{duan_fernandezdominguez_bosman_maier_yang_2012}, the NHD model
has become a quite popular approach in the study of optical properties
of metallic nanostructures.

The above considerations have naturally led to an increasing interest
for efficient numerical discretizations of Maxwell's equations coupled
with NHD model \eqref{eq_maxwell_drude} in metallic nanostructures.
Several approaches have been considered, including boundary integral
equations \cite{zheng_kupresak_mittra_vandenbosch_2018}, discontinuous
Galerkin schemes
\cite{li_lanteri_mortensen_wubs_2017a,vidalcodina_nguyen_oh_peraire_2018a}
and finite element methods
\cite{hiremath_zschiedrich_schmidt_2012a,ma_zhang_zou_2019,toscano_raza_jauho_mortensen_wubs_2012}.
Here,  we focus  on  finite element  discretizations,  which have  the
advantage to easily handle heterogeneous media as compared to integral
equations,  while   being  simpler  to  implement   and  analyze  than
discontinuous Galerkin  schemes. The  ability to work  on unstructured
meshes not only permits to deal with arbitrary geometries, but it also
allows for  local mesh  refinements.  Such local  refinements increase
the  accuracy in  those areas  where  the solution  exhibit a  complex
behavior at  a reduced cost,  and seem  of particular interest  in the
context  of nanoplasmonics,  since  plasmons are  in general  strongly
localized. Here,  we thus focus on  the design and analysis  of {\it a
  posteriori}  error  estimators,  and  their ability  to  drive  mesh
adaptive algorithms \cite{ainsworth_oden_2000a,verfurth_1994}.

Our main contributions are threefold. First, we propose a novel {\it a posteriori} error
estimator for finite element discretizations of the Maxwell-NHD system in general
three-dimensional configurations.  Our estimator is of ``residual'' type, and builds upon
previous constructions for standard Maxwell's equations
\cite{beck_hiptmair_hoppe_wohlmuth_2000a,chaumontfrelet_vega_2020a,nicaise_creuse_2003a}
with suitable modifications to handle the NHD model. Our second key contribution is a detailed
mathematical analysis of the estimator, where we show that it is both reliable and efficient
in a suitable energy norm. Finally, we numerically evaluate the ability of the estimator to
drive adaptive processes, and quantify the computational savings as compared to uniform
meshes. To this end, we consider three two-dimensional examples that are representative of
typical nanoplasmonic applications. In each case, the use of adaptivity leads to a drastic
reduction of the number of required degrees of freedom to achieve any given accuracy. These
preliminary results are very promising in view of more realistic three-dimensional applications.

To the best of our knowledge, most existing studies on the NHD model focus on the
development of numerical methods or the analysis of physical effects. In comparison,
the rigorous mathematical analysis is relatively recent, and {\it a priori}
error estimates have been only recently established \cite{ma_zhang_zou_2019}.
As a result, the proposed analysis appears to be entirely original.
	
The remainder of this manuscript is organized as follows.
In Section \ref{section_settings}, we present our model problem,
notations and preliminary results. Section \ref{section_estimator} introduces the
\textit{a posteriori} error estimator and establishes our main theoretical results.
We provide numerical examples in Section \ref{section_numerics} and draw our conclusions
in Section \ref{section_conclusions}.

\section{Settings}
\label{section_settings}

\subsection{Maxwell-NHD equations}

In this work, $\OO \subset \mathbb R^3$ is a polyhedral Lipschitz domain,
and $\OOm \subset \subset \OO$ is a Lipschitz polyhedral subset. $\OOm$ can be
multi-connected, but the boundaries of $\OO$ and $\OOm$ are not allowed
to touch each other. Whenever convenient, we will implicitly extend scalar-valued
(resp. vector-valued) functions defined in $\OOm$ by $0$ (resp. $\bo$) in $\OO \setminus \OOm$.

For the sake of simplicity, we consider a generalization of \eqref{eq_maxwell_drude}
with more general coefficients that we now describe. We assume that $\OO$ is subdivided into
a polyhedral partition $\LP$ that is conforming with $\OOm$ in the sense that any subset
$P \in \LP$ either entirely belongs to $\overline{\OO}$ or $\overline{\OOm}$.
Then $\ee,\cc: \OO \to \LL(\mathbb C^3)$, $\al: \OOm \to \LL(\mathbb C^3)$ and
$\zeta: \OOm \to \mathbb C$ are assumed to be piecewise constant onto $\LP$.

We do not require the tensor-valued functions neither to be symmetric,
nor to be positive-definite. Also, $\zeta$ is allowed to change sign.
The only assumption we require is that the problem is inf-sup stable
(see Assumption \ref{assumption_inf_sup} below) which implicitly
constrains the coefficients.

For $\bx \in \OO$, we denote by
\begin{equation*}
\varepsilon^\star(\bx)
\eq
\max_{\substack{\bu \in \mathbb C^3 \\ |\bu| = 1}}
\max_{\substack{\bv \in \mathbb C^3 \\ |\bv| = 1}}
\Re \left (\ee(\bx) \bu \cdot \overline{\bv}\right ),
\qquad
\chi^\star(\bx)
\eq
\max_{\substack{\bu \in \mathbb C^3 \\ |\bu| = 1}}
\max_{\substack{\bv \in \mathbb C^3 \\ |\bv| = 1}}
\Re \left (\cc(\bx) \bu \cdot \overline{\bv}\right )
\end{equation*}
and similarly, for $\bx \in \OOm$, we write
\begin{equation*}
\alpha^\star(\bx)
\eq
\max_{\substack{\bu \in \mathbb C^3 \\ |\bu| = 1}}
\max_{\substack{\bv \in \mathbb C^3 \\ |\bv| = 1}}
\Re \left (\al(\bx) \bu \cdot \overline{\bv}\right ),
\qquad
\zeta^\star(\bx)
\eq
|\zeta(\bx)|.
\end{equation*}
For the sake of simplicity, we implicitly extend $\alpha^\star$
and $\zeta^\star$ by zero in $\OO \setminus \overline{\OOm}$.
If $\phi\in\{\varepsilon,\chi,\alpha,\zeta\}$, we introduce the notations
$\phi_K^\star \eq \phi^\star|_K \in \mathbb R$ as well as
\begin{equation*}
\phi_{D,\min} \eq \essinf{\bx \in D} \phi^\star(\bx),
\qquad
\phi_{D,\max} \eq \esssup{\bx \in D} \phi^\star(\bx)
\end{equation*}
for any open set $D \subset \OO$, and we assume that
$\varepsilon_{\OO,\min},\mu_{\OO,\min},\alpha_{\OOm,\min},\zeta_{\OOm,\min} > 0$.

We are now ready to state our model problem. Namely, given $\BJe: \Omega \to \mathbb C^3$
and $\BKe: \OOm \to \mathbb C^3$, we seek $\BE: \Omega \to \mathbb C^3$ and
$\BJ: \OOm \to \mathbb C^3$ such that
\begin{equation}
\label{eq_maxwell_drude_strong}
\left \{
\begin{array}{rcll}
-\omega^2 \ee \BE
+
\curl \left (\cc \curl \BE\right )
+
i\omega \BJ
&=&
i\omega \BJ_{\rm e}
&
\text{ in } \OO,
\\
-\omega^2 \al \BJ  - \grad \left (\zeta \div \BJ\right )
-i\omega \BE
&=&
i\omega\BKe
&
\text{ in } \OOm.
\end{array}
\right .
\end{equation}
On the one hand, we recover \eqref{eq_maxwell_drude} when
\begin{equation*}
\ee \eq \varepsilon_0 \BI, \quad
\cc \eq \mu_0^{-1} \BI, \quad
\al \eq \frac{1}{\omega_{\rm P}^2 \eps_0}
\left(1 - \frac{1}{i\omega} \gamma\right)\BTI,
\quad
\zeta \eq \frac{\beta^2}{\omega_{\rm P}^2 \eps_0}.
\end{equation*}
On the other hand, the proposed reformulation permits to treat more general
cases in a uniform manner, without adding any mathematical complexity.
In particular, the permittivity is allowed to change sign, which enables
to take into account the local Drude model. Besides, our analysis naturally
handles anisotropic materials, and in particular, perfectly matched layers
can be employed to model unbounded propagation media \cite{monk_2003a}.

For later use, we notice that as usual in the analysis of Maxwell's equations,
there are two ``hidden'' equations in \eqref{eq_maxwell_drude_strong}, namely
\begin{equation}
\label{eq_hidden_div}
\div \left (i\omega \ee \BE + \BJ\right ) = \div \BJ_{\rm e} \text{ in } \OO
\end{equation}
and
\begin{equation}
\label{eq_hidden_curl}
\curl \left (i\omega \al \BJ - \BE \right ) = \curl \BK_{\rm e} \text{ in } \OOm.
\end{equation}

We also notice that there are three different (space-dependent) wavenumbers appearing
in the above model.
\begin{subequations}
\label{eq_wavenumbers}
As usual, the electromagnetic wavenumber is defined in $\OO$ by
\begin{equation}
\kE \eq \frac{\omega}{\cE},
\quad
\cE \eq \sqrt{\frac{\chi^\star}{\eps^\star}},
\end{equation}
and in addition, we introduce
\begin{equation}
\kJ \eq \frac{\omega}{\cJ},
\quad
\kP \eq \frac{\omegaP}{\cP},
\quad
\cJ \eq \sqrt{\frac{\zeta^\star}{\alpha^\star}},
\quad
\cP \eq \omegaP \sqrt{\zeta^\star\eps^\star},
\end{equation}
in $\OOm$. We add, for all the notations in \eqref{eq_wavenumbers},
a second subscript $K$ for the (constant) restrictions to $K \in \CT_h$.
\end{subequations}


\subsection{Functional spaces}

If $D \subset \Omega$ is an open set, $L^2(D)$
denotes the space of complex-valued square-integrable functions
defined on $D$, and $\BL^2(D) \eq \left (L^2(D)\right )^3$.
The notations $\|\cdot\|_D$ and $(\cdot,\cdot)_D$ stand for
the usual norm and inner-product of $L^2(D)$ and $\BL^2(D)$.
For $\phi \in \{\varepsilon,\chi,\alpha,\zeta\}$,
we introduce the (equivalent) norms on $L^2(D)$ and $\BL^2(D)$ defined by
\begin{equation*}
\|w\|_{\phi,D}^2\eq\int_D\phi^\star|w|^2,
\qquad
\|\bw\|_{\phi,D}^2\eq\int_D\phi^\star|\bw|^2,
\end{equation*}
$w \in L^2(D)$ and $\bw \in \BL^2(D)$.

$H^1(D)$ is the usual first-order Sobolev space of functions
$v \in L^2(D)$ such that $\grad v \in \BL^2(D)$. If $\Gamma \subset \partial \Omega$
is a relatively open set, $H^1_\Gamma(D)$ stands for the space of functions $v \in H^1(D)$
such that $v|_\Gamma = 0$. For vector-valued functions, we also introduce
$\BH^1(D) \eq \left (H^1(D)\right )^3$ and $\BH^1_\Gamma(D) \eq \left (H^1_\Gamma(D)\right )^3$.

We will also need the vector Sobolev spaces
\begin{align*}
\BH(\ccurl,D)
&\eq
\left \{
\bv \in \BL^2(D) \; | \; \curl \bv \in \BL^2(D)
\right \},
\\
\BH(\ddiv,D)
&\eq
\left \{
\bv \in \BL^2(D) \; | \; \div \bv \in L^2(D)
\right \},
\end{align*}
and there subspaces $\BH_0(\ccurl,D)$ and $\BH_0(\ddiv,D)$ that are defined,
as usual, as the closure of smooth compactly supported functions.

The aforementioned functional spaces are widely documented in the literature,
and we refer the reader to \cite{adams_fournier_2003a,girault_raviart_1986a}
for a precise description.

We finally introduce the ``energy'' space
\begin{equation*}
\mathbb V \eq \BH_0(\ccurl,\OO) \times \BH_0(\ddiv,\OOm),
\end{equation*}
that we equip with the norm
\begin{equation*}
\enorm{(\be,\bj)}^2
\eq
\omega^2 \|\be\|_{\varepsilon,\OO}^2 + \|\curl \be\|_{\chi,\OO}^2
+
\omega^2 \|\bj\|_{\alpha,\OOm}^2 + \|\div \bj\|_{\zeta,\OOm}^2,
\quad
(\be,\bj) \in \mathbb V.
\end{equation*}
If $D \subset \OO$ is an open set, we will also use the local version
\begin{equation*}
\enorm{(\be,\bj)}_D^2
\eq
\omega^2 \|\be\|_{\varepsilon,D}^2 + \|\curl \be\|_{\chi,D}^2
+
\omega^2 \|\bj\|_{\alpha,D \cap \OOm}^2 + \|\div \bj\|_{\zeta,D \cap \OOm}^2,
\quad
(\be,\bj) \in \mathbb V.
\end{equation*}

\subsection{Well-posedness}

We denote by $b: \mathbb V \times \mathbb V \to \mathbb C$
the sesquilinear form naturally associated with \eqref{eq_maxwell_drude_strong}
after integration by parts. It is defined by
\begin{align*}
b((\be,\bj),(\bv,\bw))
\eq&
-\omega^2 (\ee\be,\bv)_\OO + (\cc\curl \be,\curl \bv)_\OO
+ i\omega (\bj,\bv)_\OOm
\\
&-\omega^2 (\al\bj,\bw)_\OOm
+
(\zeta \div \bj,\div \bw)_{\OOm} - i\omega (\be,\bw)_{\OOm}
\end{align*}
for all $(\be,\bj),(\bv,\bw) \in \mathbb V$. Then, a weak
formulation of \eqref{eq_maxwell_drude_strong} consists in
finding $(\BE,\BJ) \in \mathbb V$ such that
\begin{equation}
\label{eq_maxwell_drude_weak}
b((\BE,\BJ),(\bv,\bw)) = i\omega (\BJe,\bv) +i\omega (\BKe,\bw)
\qquad
\forall (\bv,\bw) \in \mathbb V.
\end{equation}


In the remaining of this work, we require that the sesquilinear form $b$ is inf-sup stable,
which implies well-posedness of \eqref{eq_maxwell_drude_weak}. Specifically, we make the
following assumption.

\begin{assumption}[Well-posedness]
\label{assumption_inf_sup}
There exists a constant $\cis$ such that
\begin{equation}
\label{eq_inf_sup}
\inf_{(\be,\bj) \in \mathbb V \setminus \{0\}}
\sup_{(\bv,\bw) \in \mathbb V \setminus \{0\}}
\frac{\Re b((\be,\bj),(\bv,\bw))}{\enorm{(\be,\bj)}\enorm{(\bv,\bw)}}
\geq
\cis
>
0.
\end{equation}
\end{assumption}

\subsection{Mesh}

We consider a mesh $\CT_h$ of $\OO$ made of tetrahedral elements $K$.
$\CT_h$ is conforming in the sense of \cite{ciarlet_2002a}, which means that the
intersection $\overline{K}_- \cap \overline{K_+}$ of two distinct
elements $K_\pm \in \CT_h$ is either empty, or a single vertex, edge
or face of both elements. We further assume that $\CT_h$ is conforming with $\LP$,
in the sense for each $K \in \CT_h$, there exists $P \in \LP$ such that
$\overline{K} \subset \overline{P}$. We denote by $\CT_{{\rm m},h}$ the restriction
of $\CT_h$ to $\OOm$, i.e., the set of those $K \in \CT_h$ such that
$\overline{K} \subset \overline{\OOm}$.

Following \cite{ciarlet_2002a}, we employ the notations
\begin{equation*}
h_K \eq \max_{\bx,\by \in \overline{K}} |\bx-\by|,
\qquad
\rho_K \eq
\max
\left \{
r > 0 \; | \; \exists \bx \in \overline{K}; \; B(\bx,r) \subset \overline{K}
\right \}
\end{equation*}
for the diameter and inscribed sphere radius of the element $K \in \CT_h$.
$\kappa_K \eq h_K/\rho_K$ is then called the shape-regularity parameter of $K$,
and $\kappa \eq \max_{K \in \CT_h} \kappa_K$ is the shape-regularity parameter
of $\CT_h$.

We introduce, for $K \in \CT_h$ and $F \in \CF_h$, the sets
\begin{align*}
\CT_{K,h} \eq \{K' \in \CT_h \; | \; \overline{K} \cap \overline{K}' \neq \emptyset\},
\qquad
\CT_{F,h} \eq \{K'\in\CT_h \; | \;  F \subset \partial K'\},
\end{align*}
and the associated open domains
\begin{align*}
\tK \eq \operatorname{Int} \left(\bigcup_{K'\in\CT_{K,h}} \overline{K'} \right),
\qquad
\tF \eq \operatorname{Int} \left(\bigcup_{K'\in\CT_{F,h}} \overline{K'} \right).
\end{align*}
When $K \in \CT_{{\rm m},h}$ and $F \in \CF_{{\rm m},h}$,
we will also use the submeshes
$\CT_{{\rm m},K,h} \eq \CT_{K,h} \cap \CT_{{\rm m},h}$ and
$\CT_{{\rm m},F,h} \eq \CT_{F,h} \cap \CT_{{\rm m},h}$,
and the associated open domains $\tK_{\rm m}$ and $\tF_{\rm m}$.

For $\CT \subset \CT_h$ and $\CF \subset \CF_h$, we write
\begin{align*}
(\cdot,\cdot)_{\CT}
\eq
\sum_{K\in\CT}(\cdot,\cdot)_K,
\qquad
\langle\cdot,\cdot\rangle_{\partial\CT}
\eq
\sum_{K\in\CT}\langle\cdot,\cdot\rangle_{\partial K},
\qquad
\langle\cdot,\cdot\rangle_{\CF}
\eq
\sum_{F\in\CF}\langle\cdot,\cdot\rangle_F
\end{align*}
and
\begin{equation*}
\BH^1(\CT)
\eq
\left \{
\bv \in \BL^2(U) \; | \; \bv|_K \in \BH^1(K) \quad \forall K \in \CT
\right \},
\end{equation*}
with $U \eq \operatorname{Int} (\cup_{K \in \CT} \overline{K})$.


\subsection{Finite element spaces}

The usual Lagrange and N\'ed\'elec spaces on $\CT_h$ read
\begin{equation*}
V_h \eq \CP_{p+1}(\CT_h) \cap H^1_0(\Omega),
\qquad
\BW_h \eq \BCN_p(\CT_h) \cap \BH_0(\ccurl,\Omega),
\end{equation*}
and we have $\grad V_h \subset \BW_h$. We shall also need the N\'ed\'elec and Raviart-Thomas
finite element spaces in the metallic part of the domain, namely
\begin{equation*}
\BW_{{\rm m},h} \eq \BCN_p(\CT_{{\rm m},h}) \cap \BH_0(\ccurl,\OOm),
\qquad
\BX_{{\rm m},h} \eq {\BCR\BCT}_p(\CT_{{\rm m},h}) \cap \BH_0(\ddiv,\OOm).
\end{equation*}
We have $\curl \BW_{{\rm m},h} \subset \BW_{{\rm m},h}$. In addition, if extension
by zero is implicitly assume, then $\BW_{{\rm m},h} \subset \BW_h$. We refer the
reader to \cite{monk_2003a} for a detailed description of these finite element
spaces.

\subsection{Quasi-interpolation operators}

\begin{subequations}
Classically, our analysis will rely on ``quasi-interpolation'' operators
\cite{ern_guermond_2017a}. Specifically, there exist four operators
$\CP_h: H^1_0(\OO) \to V_h$, $\CQ_h: \BH_0(\ccurl,\OO) \to \BW_h$,
$\CQ_{{\rm m},h}: \BH_0(\ccurl,\OOm) \to \BW_{{\rm m},h}$ and
$\CR_{{\rm m},h}: \BH_0(\ddiv,\OOm) \to \BX_{{\rm m},h}$
and a constant $\cqi$ that only depends on the shape-regularity
parameter $\kappa$ such that
\begin{align}
\label{eq_qi_H1}
h_K^{-1} \|q-\CP_hq \|_K
+
h_K^{-1/2} \|q-\CP_h q\|_{\partial K}
&\leq
\cqi
\|\grad q\|_{\tK},
\\
\label{eq_qi_Hc}
h_K^{-1} \|\bv-\CQ_h\bv \|_K
+
h_K^{-1/2} \|(\bv-\CQ_h \bv) \times \bn\|_{\partial K}
&\leq
\cqi
\|\grad_h \bv\|_{\tK}
\end{align}
for all $q \in H^1_0(\Omega)$, $\bv \in \BH^1(\CT_h) \cap \BH_0(\ccurl,\OO)$
and $K \in \CT_h$, as well as
\begin{align}
\label{eq_qi_Hcm}
h_K^{-1} \|\bv-\CQ_{{\rm m},h}\bv \|_K
+
h_K^{-1/2} \|(\bv-\CQ_{{\rm m},h} \bv) \times \bn\|_{\partial K}
&\leq
\cqi
\|\grad_h \bv\|_{\tK_{\rm m}},
\\
\label{eq_qi_Hdm}
h_K^{-1} \|\bw-\CR_{{\rm m},h}\bw \|_K
+
h_K^{-1/2} \|(\bw-\CR_{{\rm m},h} \bw) \cdot \bn\|_{\partial K}
&\leq
\cqi
\|\grad_h \bw\|_{\tK_{\rm m}}
\end{align}
for all $\bv \in \BH^1(\CT_{{\rm m},h}) \cap \BH_0(\ccurl,\OOm)$,
$\bw \in \BH^1(\CT_{{\rm m},h}) \cap \BH_0(\ddiv,\OOm)$,
and $K \in \CT_{{\rm m},h}$.
\end{subequations}

\subsection{Bubble functions}
\label{bubbles}
	
\begin{subequations}
Classically, we will use ``bubble'' functions to localize our error analysis.
We refer the reader to \cite{verfurth_1994} for a detailed presentation and only
state the essential result we need. Given an element $K \in \CT_h$ and a face $F \in \CF_h$,
we denote by $b_K \in H_0^1(K)$ and $b_F \in H_0^1(\tF)$ the element and face bubble functions
supported in $K$ and $\tF$, respectively. The estimates
\begin{equation}
\label{eq_norm_bubble}
\|w\|_K \leq \cb\|b_K^{1/2} w\|_K, \qquad \|v\|_F \leq \cb \|b_F^{1/2} v\|_F
\end{equation}
hold for all $w \in \CP_{p+1}(K)$ and $v \in \CP_{p+1}(F)$, where $\cb>0$
is a constant depending on the polynomial degree $p$ and the
shape regularity parameter $\kappa$. Standard inverse inequalities let us conclude that
\begin{equation}
\label{eq_inv_bubble}
\|\grad(w b_K)\|_K \leq \ci h_K^{-1}\|w\|_K
\qquad
\forall w \in \CP_{k+1}(K),
\end{equation}
where again, $\ci$ only depends on $\kappa$ and $p$.
We further consider an extension operator $\LL_F: \CP_{k+1}(F) \to \CP_{p+1}(\tF)$
such that $\LL_F(v)|_F=v$ and
\begin{equation}
\label{eq_ext_bubble}
\|\LL_F(v)\|_{\tF} + h_F \|\grad(\LL_F(v))\|_{\tF}
\leq
\ce h_F^{1/2}\|v\|_F, \qquad v \in \CP_{k+1}(F),
\end{equation}
where $\ce$ depends on $\beta$ and $p$.
\end{subequations}
The same results hold true for vector-valued function,
as can be seen by applying the scalar estimates componentwise.
	
\subsection{Data oscillation}
\label{section_oscillation}

Our efficiency estimates include a data-oscillation term that we define
in this section. We first define a ``projected source term'' $\pi_h \BJe \in \BCP_{p+1}(\CT_h)$,
that is defined for each $K \in \CT_h$ as the unique element in $\BCP_{p+1}(K)$ such that
\begin{equation*}
\frac{\omega^2 h_K^2}{\cEK^2}(\pi_h\BJe,\bv_h)_K
+
h_K^2(\div(\pi_h\BJe),\div\bv_h)_K
=
\frac{\omega^{2} h_K^{2}}{\cEK^{2}}(\BJe,\bv_h)_K
+
h_K^2(\div\BJe,\div\bv_h)_K
\end{equation*}
for all $\bv_h \in \BCP_{k+1}(K)$. Analogously, we define $\varrho_h \BKe\in \BCP_{p+1}(\CT_h)$,
for each $ K\in\CT_h $, as the unique element in $ \BCP_{p+1}(K) $ such that
\begin{equation*}
\frac{\omega^2 h_K^2}{\cJK^2}(\varrho_h\BKe,\bw_h)_K
+
h_K^2(\curl(\varrho_h\BKe),\curl\bw_h)_K
=
\frac{\omega^{2} h_K^{2}}{\cJK^{2}}(\BKe,\bw_h)_K
+
h_K^2(\curl\BKe,\curl\bw_h)_K
\end{equation*}
for all $\bw_h \in \BCP_{k+1}(K)$.
Then, we may introduce the data oscillation term
\begin{align*}
\osc_K^2
&\eq
\frac{1}{\eps_K^\star}
\left (
\frac{\omega^2 h_K^2}{\cEK^2}\|\BJe-\pi_h\BJe\|_K^2 + h_K^2\|\div(\BJe-\pi_h\BJe)\|_K^2
\right )
\\
&\;+
\frac{1}{\alpha_K^\star}
\left (
\frac{\omega^2 h_K^2}{\cJK^2}\|\BKe-\varrho_h\BKe\|_K^2 + h_K^2\|\curl(\BKe-\varrho_h\BKe)\|_K^2
\right )
\end{align*}
for all $K \in \CT_h$, and 
\begin{equation*}
\osc_{\tK}^2 \eq \sum_{K \in \CT_h^K} \osc_K^2.
\end{equation*}

Notice that whenever the right-hand side is smooth, namely
$\BJe \in \BH^{p+1}(\CT_h)$, $\div \BJe \in H^{p+1}(\CT_h)$,
$\BKe \in \BH^{p+1}(\CT_h)$ and $\curl \BKe \in \BH^{p+1}(\CT_h)$,
we have $\osc_K = O(h^{p+2})$ for all $K \in \CT_h$.

\subsection{Regular decomposition}

For all $(\bv,\bw) \in \mathbb V$, there exist
$q \in H^1_0(\OO)$,
$\bthe \in \BH^1(\CT_{{\rm m},h}) \cap \BH_0(\ccurl,\OOm)$,
$\tbv \in \BH^1(\CT_h) \cap \BH_0(\ccurl,\OO)$
and
$\tbw \in \BH^1(\CT_{{\rm m},h}) \cap \BH_0(\ddiv,\OOm)$
such that
\begin{subequations}
\label{eq_regular_decomposition}
\begin{equation}
\label{eq_regular_decomposition_identity}
(\bv,\bw) = (\grad p + \tbv,\curl \bthe + \tbw)
\end{equation}
and
\begin{equation}
\label{eq_regular_decomposition_estimate}
\omega \|\grad q\|_{\varepsilon,\OO} + \omega \|\grad_h \bthe\|_{\alpha,\OOm}
+
\|\grad_h \tbv\|_{\chi,\OO} + \|\grad_h \tbw\|_{\zeta,\OOm}
\leq
\crd
\enorm{(\bv,\bw)},
\end{equation}
where $\crd$ is a constant that only depends on $\LP$, the ratio between
the minimum and maximum value of the coefficients. We refer the reader to
Theorems 2 and 6 of \cite{hiptmair_pechstein_2019a}, as well as
\cite{chaumontfrelet_vega_2020a}, Appendix B.
\end{subequations}

\subsection{Inequalities with hidden constants}

To simplify the remaining of the exposition, if $A,B \geq 0$
are real numbers, we employ the notation $A \lesssim B$ if
there exists a constant $C$ that only depends on
$\cis$, $\cqi$, $\cb$, $\ce$, $\ci$, $\crd$ and the material
contrasts such that $A \leq CB$. In particular, $C$ may depend
on the geometry of the domain and the material coefficients,
the mesh shape-regularity $\kappa$ and the polynomial degree $p$,
but \emph{not} on the mesh size $h$.

\section{A posteriori error estimates}
\label{section_estimator}

\subsection{Numerical solution}

We are interested in finite element approximations to \eqref{eq_maxwell_drude_weak}.
Specifically, we introduce the (conforming) discretization space
$\mathbb V_h \eq \BW_h \times \BX_{{\rm m},h}$ and consider an element
$(\BE_h,\BJ_h) \in \mathbb V_h$ such that
\begin{equation}
\label{eq_galerkin_orthogonality}
b((\BE_h,\BJ_h),(\bv_h,\bw_h)) = i\omega (\BJe,\bv_h)+i\omega (\BKe,\bw_h)
\qquad
\forall (\bv_h,\bw_h) \in \mathbb V_h.
\end{equation}

\subsection{A posteriori error estimator}

We devise a residual-based error estimator. It is based on four terms.
The first two are motivated by the two equations of
\eqref{eq_maxwell_drude_strong} and read
\begin{multline*}
\eta_{\ccurl,\ccurl,K}
\eq
\frac{h_K}{\sqrt{\chi_K^\star}}
\|
-\omega^2 \ee \BE_h + \curl \left (\cc \curl \BE_h\right ) + i\omega \BJ_h - i\omega \BJ_{\rm e}
\|_K
\\
+
\frac{h_K^{1/2}}{\sqrt{\chi_K^\star}}
\|\jmp{\cc \curl \BE_h} \times \bn\|_{\partial K \setminus \partial \OO}
\end{multline*}
for all $K \in \CT_h$ and
\begin{multline*}
\eta_{\ggrad,\ddiv,K}
\eq
\frac{h_K}{\sqrt{\zeta_K^\star}}
\|
-\omega^2 \al \BJ_h
-\grad \left (\zeta \div \BJ_h \right )
-i\omega \BE_h- i\omega \BK_{\rm e}
\|_K
\\
+
\frac{h_K^{1/2}}{\sqrt{\zeta_K^\star}}
\|\jmp{\zeta \div \BJ_h}\|_{\partial K \setminus \partial \OOm}
\end{multline*}
for $K \in \CT_{{\rm m},h}$. As usual in the context of Maxwell's equations
\cite{beck_hiptmair_hoppe_wohlmuth_2000a,nicaise_creuse_2003a},
these two terms are insufficient, and we also need to consider the residual terms
associated with ``hidden'' equations \eqref{eq_hidden_div} and \eqref{eq_hidden_curl}.
Hence, we introduce
\begin{equation*}
\eta_{\ddiv,K}
\eq
\frac{h_K}{\sqrt{\varepsilon_K^\star}}
\|\div(i\omega\ee\BE_h  + \BJ_h-\BJ_{\rm e})\|_K
+
\frac{\omega h_K^{1/2}}{\sqrt{\varepsilon_K^\star}}
\|\jmp{\ee \BE_h} \cdot \bn\|_{\partial K \setminus \partial \OO}
\end{equation*}
if $K \in \CT_h$ and
\begin{equation*}
\eta_{\ccurl,K}
\eq
\frac{h_K}{\sqrt{\alpha_K^\star}}
\|\curl(i\omega\al\BJ_h - \BE_h-\BK_{\rm e})\|_K
+
\frac{\omega h_K^{1/2}}{\sqrt{\alpha_K^\star}}
\|\jmp{\al \BJ_h} \times \bn\|_{\partial K \setminus \partial \OOm}
\end{equation*}
for all $K \in \CT_{{\rm m},h}$. We then set
\begin{equation*}
\eta_K
\eq
\eta_{\ccurl,\ccurl,K} + \eta_{\ggrad,\ddiv,K} + \eta_{\ddiv,K} + \eta_{\ccurl,K},
\end{equation*}
for all $K \in \CT_h$ with the implicit convention that
$\eta_{\ggrad,\ddiv,K} = \eta_{\ccurl,K} = 0$ when $K \notin \CT_{{\rm m},h}$.
Finally,
\begin{equation*}
\eta \eq \left (\sum_{K \in \CT_h} \eta_K^2 \right )^{1/2}
\end{equation*}
gathers the elementwise contributions, and we define $\eta_{\ccurl,\ccurl}$,
$\eta_{\ggrad,\ddiv}$, $\eta_{\ddiv}$ and $\eta_{\ccurl}$ in a similar way.

\subsection{Reliability}

We first establish that the proposed estimator is reliable.
The key ingredient of the proof is to estimate, for an arbitrary
element $(\bv,\bw) \in \mathbb V$, the quantity
\begin{equation*}
|b((\BE-\BE_h,\BJ-\BJ_h),(\bv,\bw))|
\end{equation*}
using the estimator $\eta$. This is done in four major steps,
that are presented in Lemmas \ref{lemma_rel_grad_o}, \ref{lemma_rel_o_curl},
\ref{lemma_rel_H1_o} and \ref{lemma_rel_o_H1} below.

\begin{lemma}
\label{lemma_rel_grad_o}
We have
\begin{equation*}
|b((\BE-\BE_h,\BJ-\BJ_h),(\grad q,\bo))|
\lesssim
\eta_{\ddiv}
\omega\|\grad q\|_{\varepsilon,\Omega}
\end{equation*}
for all $q \in H^1_0(\OO)$.
\end{lemma}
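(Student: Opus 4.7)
The plan is to use the fact that testing $b$ against a gradient kills the $\curl$--$\curl$ term, then combine the Galerkin orthogonality with the weak form of the hidden divergence equation \eqref{eq_hidden_div} to extract a purely residual expression on the discrete solution, and finally apply integration by parts together with the quasi-interpolation operator $\CP_h$ of \eqref{eq_qi_H1}.

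First I would observe that since $\curl \grad q = 0$ and $\BJ - \BJ_h$ is (implicitly) extended by zero outside $\OOm$, the definition of $b$ yields
\begin{equation*}
b((\BE-\BE_h,\BJ-\BJ_h),(\grad q,\bo))
=
i\omega\bigl(i\omega\ee(\BE-\BE_h) + (\BJ-\BJ_h),\grad q\bigr)_\OO.
\end{equation*}
Next, because $\grad V_h \subset \BW_h$, one has $(\grad\CP_h q,\bo)\in\mathbb V_h$, so the Galerkin orthogonality \eqref{eq_galerkin_orthogonality} allows replacing $q$ by $\xi \eq q-\CP_h q$ on the right. At this point the weak form of \eqref{eq_hidden_div}—which follows from plugging $(\grad\xi,\bo)$ into \eqref{eq_maxwell_drude_weak}—gives $(i\omega\ee\BE+\BJ-\BJe,\grad\xi)_\OO = 0$, eliminating the exact solution entirely and leaving
\begin{equation*}
b((\BE-\BE_h,\BJ-\BJ_h),(\grad q,\bo))
=
-i\omega\bigl(i\omega\ee\BE_h + \BJ_h - \BJe,\grad\xi\bigr)_\OO.
\end{equation*}

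The third step is elementwise integration by parts. This generates volume residuals $(\div(i\omega\ee\BE_h+\BJ_h-\BJe),\xi)_K$ and interface contributions $\langle\jmp{(i\omega\ee\BE_h+\BJ_h-\BJe)\cdot\bn},\xi\rangle_F$. Here the key structural point is that the normal jumps of $\BJ_h$ vanish: across interior faces of $\CT_{{\rm m},h}$ because $\BX_{{\rm m},h}\subset\BH(\ddiv,\OOm)$, across $\partial\OOm$ because $\BJ_h$ has vanishing normal trace there and is extended by zero, and $\BJe$ is continuous. Consequently the face residual reduces to $\omega\jmp{\ee\BE_h}\cdot\bn$, which is exactly what appears in $\eta_{\ddiv,K}$.

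Finally, Cauchy–Schwarz together with the interpolation bounds $\|\xi\|_K\le\cqi h_K\|\grad q\|_{\tK}$ and $\|\xi\|_{\partial K}\le\cqi h_K^{1/2}\|\grad q\|_{\tK}$ turns the volume and face contributions into $\eta_{\ddiv,K}\cdot \omega\sqrt{\eps_K^\star}\|\grad q\|_{\tK}$; one then sums in $K$ using a final Cauchy–Schwarz and invokes the finite-overlap property of the patches $\tK$ (absorbing material-contrast factors into $\lesssim$) to arrive at $\eta_{\ddiv}\,\omega\|\grad q\|_{\varepsilon,\OO}$. The only subtle point I would expect to need care with is making the jump bookkeeping across $\partial\OOm$ rigorous—once that is settled, the remainder is a textbook residual estimation.
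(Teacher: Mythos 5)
Your proposal is correct and follows essentially the same route as the paper: kill the $\ccurl$--$\ccurl$ term by testing against a gradient, eliminate the exact solution via the hidden divergence relation \eqref{eq_hidden_div}, integrate by parts elementwise so that only the volume residual $\div(i\omega\ee\BE_h+\BJ_h-\BJe)$ and the normal jump of $\ee\BE_h$ survive, and conclude with Galerkin orthogonality, the quasi-interpolation bound \eqref{eq_qi_H1}, and Cauchy--Schwarz over the patches. The only cosmetic difference is ordering -- you invoke Galerkin orthogonality before integrating by parts and use the weak form of the hidden equation rather than its strong form -- which changes nothing of substance.
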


\begin{proof}
We first observe that
\begin{align*}
b((\BE-\BE_h,\BJ-\BJ_h),(\grad p,\bo))
&=
-\omega^2(\ee(\BE-\BE_h),\grad p)_{\CT_h} + i\omega(\BJ-\BJ_h,\grad p)_{\CT_h}
\\
&=
i\omega
\left (
(i\omega\ee\BE+\BJ),\grad p)_{\CT_h} - (i\omega \BE_h -\BJ_h,\grad p)_{\CT_h}
\right ).
\end{align*}
Then, since $\ee \BE,\BJ,\BJ_h \in \BH_0(\ddiv,\Omega)$ and $p \in H^1_0(\Omega)$,
elementwise integration by parts reveals that
\begin{align*}
b((\BE-\BE_h,\BJ-\BJ_h),(\grad p,\bo))
&=
i\omega
\left (
-(\div(i\omega\ee\BE+\BJ),p)_{\CT_h}
+
(\div (i\omega \ee\BE_h + \BJ_h),p)_{\CT_h}
\right )
\\
&\quad-
\omega^2 \langle \ee \BE_h \cdot \bn,p \rangle_{\partial \CT_h}
\\
&=
i\omega (\div (i\omega \ee\BE_h  + \BJ_h-\BJ_e),p)_{\CT_h}
-
\omega^2 \langle\jmp{\ee\BE_h} \cdot \bn,p\rangle_{\CF_h^{\rm i}}.
\end{align*}
Upon rearranging the face sum, it follows that
\begin{equation*}
|b((\BE-\BE_h,\BJ-\BJ_h),(\grad p,\bo))|
\lesssim
\sum_{K \in \CT_h}
\eta_{\ddiv,K}
\omega\sqrt{\varepsilon_K^\star}
\left (
h_K^{-1} \|p\|_K
+
h^{-1/2}_K \|p\|_{\partial K}
\right ).
\end{equation*}
Now, let $q \in H^1_0(\Omega)$. Using Galerkin orthogonality
\eqref{eq_galerkin_orthogonality}, the inclusion $\grad V_h \subset \BW_h$,
and quasi-interpolation estimate \eqref{eq_qi_H1}, we have
\begin{align*}
|b((\BE-\BE_h,\BJ-\BJ_h),(\grad q,\bo))|
&=
|b((\BE-\BE_h,\BJ-\BJ_h),(\grad (q-\CP_h q),\bo))|
\\
&\lesssim
\sum_{K \in \CT_h}
\eta_{\ddiv,K}
\omega\sqrt{\varepsilon_K^\star}
\left (
h_K^{-1} \|q-\CP_h q\|_K
+
h^{-1/2}_K \|q-\CP_h q\|_{\partial K}
\right )
\\
&\lesssim
\sum_{K \in \CT_h} \eta_{\ddiv,K}
\omega\sqrt{\varepsilon_K^\star} \|\grad q\|_{\CT_h^K}
\lesssim
\eta_{\ddiv} \omega \|\grad q\|_{\varepsilon,\Omega},
\end{align*}
and the result follows.
\end{proof}

\begin{lemma}
\label{lemma_rel_o_curl}
We have
\begin{equation*}
|b((\BE-\BE_h,\BJ-\BJ_h),(\bo,\curl \bthe))|
\lesssim
\eta_{\ccurl}
\omega\|\grad_h \bthe\|_{\alpha,\OOm}
\end{equation*}
for all $\bthe \in \BH^1(\CT_{{\rm m},h}) \cap \BH_0(\ccurl,\OOm)$.
\end{lemma}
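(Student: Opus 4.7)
The plan is to mirror the structure of the proof of Lemma~\ref{lemma_rel_grad_o}, replacing the divergence-based hidden equation \eqref{eq_hidden_div} with its curl-based counterpart \eqref{eq_hidden_curl}. The first step is to substitute $(\bv,\bw) = (\bo,\curl\bthe)$ into the definition of $b$. The three terms pairing against $\bv=\bo$ vanish immediately, and the term $(\zeta\div(\BJ-\BJ_h),\div\curl\bthe)_\OOm$ vanishes because $\div\curl=0$. Factoring the two surviving terms gives
\begin{equation*}
b((\BE-\BE_h,\BJ-\BJ_h),(\bo,\curl\bthe))
=
i\omega\bigl(i\omega\al(\BJ-\BJ_h)-(\BE-\BE_h),\;\curl\bthe\bigr)_{\OOm}.
\end{equation*}

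Next, I would integrate by parts elementwise on $\CT_{{\rm m},h}$ to move the curl from $\bthe$ onto the residual $\bR \eq i\omega\al(\BJ-\BJ_h)-(\BE-\BE_h)$. For the volume piece, the hidden equation \eqref{eq_hidden_curl} gives $\curl\bR|_K = -\curl(i\omega\al\BJ_h-\BE_h-\BKe)|_K$, which is precisely the volume residual entering $\eta_{\ccurl,K}$. For the boundary pieces, faces on $\partial\OOm$ drop out because $\bthe\times\bn=0$ there; on interior faces the tangential continuity of $\BE$, of $\BE_h\in\BW_h$, and (via the hidden equation) of $i\omega\al\BJ-\BE-\BKe$ reduces the jumps to $i\omega\jmp{\al\BJ_h}\times\bn$, matching the face residual in $\eta_{\ccurl,K}$.

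I would then invoke Galerkin orthogonality: since $\CQ_{{\rm m},h}\bthe\in\BW_{{\rm m},h}$ and $\curl\BW_{{\rm m},h}\subset\BX_{{\rm m},h}$, the pair $(\bo,\curl\CQ_{{\rm m},h}\bthe)$ lies in $\mathbb V_h$, so \eqref{eq_galerkin_orthogonality} allows me to substitute $\bthe-\CQ_{{\rm m},h}\bthe$ for $\bthe$ at no cost. Elementwise Cauchy--Schwarz together with the quasi-interpolation estimate \eqref{eq_qi_Hcm} then distributes the weights so that $h_K/\sqrt{\alpha_K^\star}$ (respectively $\omega h_K^{1/2}/\sqrt{\alpha_K^\star}$) attaches to the volume (respectively face) residual, reconstructing $\eta_{\ccurl,K}$, while the complementary factor is $\omega\sqrt{\alpha_K^\star}\|\grad_h\bthe\|_{\tK_{\rm m}}$. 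A discrete Cauchy--Schwarz, combined with the finite overlap of the patches $\{\tK_{\rm m}\}$ guaranteed by the shape regularity $\kappa$, yields the announced bound $\eta_{\ccurl}\,\omega\|\grad_h\bthe\|_{\alpha,\OOm}$.

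The main obstacle I anticipate is the interface bookkeeping in Step~2: one must justify the tangential continuity of $i\omega\al\BJ-\BE-\BKe$ across interior faces (requiring at minimum $\BKe\in\BH(\ccurl,\OOm)$, since $\al\BJ$ has no such continuity on its own), and check that the discrete jump reduces exactly to $\jmp{\al\BJ_h}\times\bn$ without a leftover $\jmp{\BKe}\times\bn$ contribution that would have to be absorbed into the data-oscillation term of Section~\ref{section_oscillation}. A secondary technicality is verifying that the factors $\omega$, $h_K$ and $\sqrt{\alpha_K^\star}$ recombine precisely into the normalization of $\eta_{\ccurl,K}$ rather than producing spurious weights in the final discrete Cauchy--Schwarz.
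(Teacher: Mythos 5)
Your proposal follows the paper's proof essentially step for step: the same reduction of $b((\BE-\BE_h,\BJ-\BJ_h),(\bo,\curl\bthe))$ to a single curl pairing, the same elementwise integration by parts combined with the hidden equation \eqref{eq_hidden_curl} to produce exactly the volume and face residuals of $\eta_{\ccurl,K}$, and the same use of Galerkin orthogonality with $\curl\CQ_{{\rm m},h}\bthe\in\BX_{{\rm m},h}$ followed by \eqref{eq_qi_Hcm} and a discrete Cauchy--Schwarz over the patches. The tangential-continuity bookkeeping you flag as a potential obstacle is resolved in the paper exactly as you anticipate: since $\curl(i\omega\al\BJ-\BE)=\curl\BKe\in\BL^2(\OOm)$ by \eqref{eq_hidden_curl}, the exact residual lies in $\BH(\ccurl,\OOm)$ and contributes no face terms, so only $\jmp{\al\BJ_h}\times\bn$ survives and no $\jmp{\BKe}$ or oscillation contribution appears.
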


\begin{proof}
Let $\bphi \in \BH^1(\CT_{{\rm m},h}) \cap \BH_0(\ccurl,\OOm)$.
The first step of the proof consists in writing that
\begin{align*}
b((\BE-\BE_h,\BJ-\BJ_h),(\bo,\curl \bphi))
&=
-\omega^2 (\al(\BJ-\BJ_h),\curl \bphi)_{\OOm}
-i\omega (\BE-\BE_h,\curl \bphi)_{\OOm}
\\
&=
i\omega \left (
(i\omega \al \BJ-\BE,\curl \bphi)_{\OOm}
-
(i\omega \al \BJ_h-\BE_h,\curl \bphi)_{\OOm}
\right ).
\end{align*}
Recalling \eqref{eq_hidden_curl}, integrating by parts over each $K \in \CT_{{\rm m},h}$
reveals that
\begin{align*}
b((\BE-\BE_h,\BJ-\BJ_h),(\bo,\curl \bphi))
&=
i\omega \left ((\curl(i\omega \al \BJ- \BE),\bphi)_{\CT_{{\rm m},h}}
- (\curl(i\omega \al \BJ_h- \BE_h),\bphi)_{\CT_{{\rm m},h}}\right)\\
&
+\;\omega^2 \langle\al \BJ_h \times \bn,\bphi \rangle_{\partial \CT_{{\rm m},h}}
\\
&=
-i\omega(\curl(i\omega \al \BJ_h- \BE_h-\BKe),\bphi)_{\CT_{{\rm m},h}}
+\omega^2 \langle\al \BJ_h \times \bn,\bphi \rangle_{\partial \CT_{{\rm m},h}}
\\
&=
-i\omega
(\curlh(i\omega \al \BJ_h-\BE_h-\BKe),\bphi)_{\OOm}
+ \omega^2 \langle\jmp{\al \BJ_h} \times \bn,\bphi\rangle_{\CF_{{\rm m},h}},
\end{align*}
and
\begin{align*}
|b((\BE-\BE_h,\BJ-\BJ_h),(\bo,\curl \bphi))|
&\lesssim
\sum_{K \in \CT_{{\rm m},h}} \eta_{\ccurl,K}
\omega\sqrt{\alpha_K^\star}
\left (
h_K^{-1} \|\bphi\|_K
+
h_K^{-1/2} \|\bphi\|_{\partial K}
\right ).
\end{align*}
Let $\bthe \in \BH^1(\CT_{{\rm m},h}) \cap \BH_0(\ccurl,\OOm)$.
Then, recalling Galerkin orthogonality \eqref{eq_galerkin_orthogonality},
that $\curl \BW_{{\rm m},h} \subset \BX_{{\rm m},h}$ and \eqref{eq_qi_Hcm}, we have
\begin{align*}
|b((\BE-\BE_h,\BJ\ -\ &\BJ_h),(\bo,\curl \bthe))|=
|b((\BE-\BE_h,\BJ-\BJ_h),(\bo,\curl (\bthe-\CQ_{{\rm m},h} \bthe))|
\\
&\lesssim
\sum_{K \in \CT_{{\rm m},h}} \eta_{\ccurl,K}
\omega\sqrt{\alpha_K^\star}
\left (
h_K^{-1} \|\bthe-\CQ_{{\rm m},h} \bthe\|_K
+
h_K^{-1/2} \|(\bthe-\CQ_{{\rm m},h}\bthe)\times\bn \|_{\partial K}
\right )
\\
&\lesssim
\sum_{K \in \CT_{{\rm m},h}} \eta_{\ccurl,K} \omega
\sqrt{\alpha_K^\star} \|\grad \bthe\|_{\CT_{{\rm m},h}^K}
\lesssim
\eta_{\ccurl} \omega\|\grad \bthe\|_{\alpha,\CT_{{\rm m},h}}.
\end{align*}
\end{proof}

\begin{lemma}
\label{lemma_rel_H1_o}
We have
\begin{equation*}
|b((\BE-\BE_h,\BJ-\BJ_h),(\tbv,\bo))|
\lesssim
\eta_{\ccurl,\ccurl}\|\grad_h \tbv\|_{\chi,\Omega}
\end{equation*}
for all $\tbv \in \BH^1(\CT_h) \cap \BH_0(\ccurl,\OO)$.
\end{lemma}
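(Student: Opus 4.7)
The proof will follow the same three-stage template used in Lemmas \ref{lemma_rel_grad_o} and \ref{lemma_rel_o_curl}: (i) write out $b((\BE-\BE_h,\BJ-\BJ_h),(\tbv,\bo))$ by combining the strong form for the exact solution with an elementwise integration by parts on the discrete part; (ii) exploit Galerkin orthogonality by replacing $\tbv$ with $\tbv-\CQ_h\tbv$; (iii) apply the quasi-interpolation estimate \eqref{eq_qi_Hc}.

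For step (i), since $(\BE,\BJ)$ solves \eqref{eq_maxwell_drude_weak}, testing against $(\tbv,\bo)\in\mathbb V$ gives
$b((\BE,\BJ),(\tbv,\bo)) = i\omega(\BJe,\tbv)_\OO$, and by the definition of $b$,
\begin{equation*}
b((\BE-\BE_h,\BJ-\BJ_h),(\tbv,\bo))
=
i\omega(\BJe,\tbv)_\OO + \omega^2(\ee\BE_h,\tbv)_\OO - (\cc\curl\BE_h,\curl\tbv)_\OO - i\omega(\BJ_h,\tbv)_\OOm.
\end{equation*}
Because $\tbv\in\BH^1(\CT_h)$, elementwise integration by parts of the curl term yields
$(\cc\curl\BE_h,\curl\tbv)_{\CT_h}=(\curl(\cc\curl\BE_h),\tbv)_{\CT_h}+\langle(\cc\curl\BE_h)\times\bn,\tbv\rangle_{\partial\CT_h}$, and rearranging the boundary contributions into jumps over internal faces (using that $\tbv$ has vanishing tangential trace on $\partial\OO$) produces
\begin{equation*}
b((\BE-\BE_h,\BJ-\BJ_h),(\tbv,\bo))
=
-(\BR_h,\tbv)_{\CT_h}-\langle\jmp{\cc\curl\BE_h}\times\bn,\tbv\rangle_{\CF_h^{\rm i}},
\end{equation*}
where $\BR_h\eq -\omega^2\ee\BE_h+\curl(\cc\curl\BE_h)+i\omega\BJ_h-i\omega\BJe$. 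A straightforward Cauchy--Schwarz, together with the identity $|\langle \ba\times\bn,\bb\rangle_F| = |\langle \ba\times\bn,\bb_t\rangle_F|\le \|\ba\times\bn\|_F\|\bb\times\bn\|_F$ (valid since $\ba\times\bn$ is tangential) then gives
\begin{equation*}
|b((\BE-\BE_h,\BJ-\BJ_h),(\tbv,\bo))|
\lesssim
\sum_{K\in\CT_h}\eta_{\ccurl,\ccurl,K}\sqrt{\chi_K^\star}
\left(h_K^{-1}\|\tbv\|_K+h_K^{-1/2}\|\tbv\times\bn\|_{\partial K}\right).
\end{equation*}

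For step (ii), we use Galerkin orthogonality \eqref{eq_galerkin_orthogonality}: since $\CQ_h\tbv\in\BW_h$ and $(\CQ_h\tbv,\bo)\in\mathbb V_h$, the residual vanishes against it and we may replace $\tbv$ by $\tbv-\CQ_h\tbv$ in the bound above. Step (iii) is then the quasi-interpolation estimate \eqref{eq_qi_Hc}, which controls the two factors $h_K^{-1}\|\tbv-\CQ_h\tbv\|_K+h_K^{-1/2}\|(\tbv-\CQ_h\tbv)\times\bn\|_{\partial K}$ by $\cqi\|\grad_h\tbv\|_{\tK}$. A discrete Cauchy--Schwarz over $K\in\CT_h$, absorbing the finite overlap of the patches $\tK$ into the hidden constant, yields
\begin{equation*}
|b((\BE-\BE_h,\BJ-\BJ_h),(\tbv,\bo))|
\lesssim
\eta_{\ccurl,\ccurl}\|\grad_h\tbv\|_{\chi,\OO},
\end{equation*}
which is the desired bound.

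No step here should present a genuine obstacle — the structure mirrors Lemma \ref{lemma_rel_grad_o}/\ref{lemma_rel_o_curl} almost verbatim, with the roles of $\div$/$\curl$ and the corresponding quasi-interpolant swapped. The only mild bookkeeping point is the tangential-pairing identity used to match the jump $\jmp{\cc\curl\BE_h}\times\bn$ with the bound $\|(\tbv-\CQ_h\tbv)\times\bn\|_{\partial K}$ furnished by \eqref{eq_qi_Hc}; once this is noted the argument is mechanical.
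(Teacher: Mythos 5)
Your proposal is correct and follows essentially the same route as the paper's proof: expand the residual using the weak form, integrate by parts elementwise to expose the element residual $-\omega^2\ee\BE_h+\curl(\cc\curl\BE_h)+i\omega\BJ_h-i\omega\BJe$ and the tangential jumps $\jmp{\cc\curl\BE_h}\times\bn$, then invoke Galerkin orthogonality with $\CQ_h\tbv$ and the quasi-interpolation estimate \eqref{eq_qi_Hc}. The only cosmetic difference is that you spell out the tangential-pairing bookkeeping, which the paper leaves implicit.
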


\begin{proof}
Let $\bphi \in \BH^1(\CT_h) \cap \BH_0(\ccurl,\Omega)$. We have
\begin{equation*}
b((\BE-\BE_h,\BJ-\BJ_h),(\bphi,\bo))
=
i\omega (\BJ_e,\bphi) - b((\BE_h,\BJ_h),(\bphi,\bo)),
\end{equation*}
and
\begin{align*}
b((\BE_h,\BJ_h),(\bphi,\bo))
&=
-\omega^2(\ee\BE_h,\bphi)_{\CT_h}
+
(\cc \curl \BE_h,\curl \bphi)_{\CT_h}
+
i\omega(\BJ,\bphi)_{\CT_h}
\\
&=
(-\omega^2\ee\BE_h + \curl \left (\cc \curl \BE_h\right ) + i\omega \BJ_h,\bphi)_{\CT_h}
+
\langle \left (\cc \curl \BE_h\right ) \times \bn,\bphi \rangle_{\partial \CT_h}.
\end{align*}
It follows that
\begin{align*}
&b((\BE-\BE_h,\BJ-\BJ_h),(\bphi,\bo))
\\
&=
-(-\omega^2\ee\BE_h + \curlh \left (\cc \curl \BE_h\right ) + i\omega \BJ_h - i\omega \BJ_{\rm e},\bphi)_{\OO}
+
\langle \jmp{\cc \curl \BE_h} \times \bn,\bphi \rangle_{\CF_h},
\end{align*}
and
\begin{align*}
|b((\BE-\BE_h,\BJ-\BJ_h),(\bphi,\bo))|
\lesssim
\sum_{K \in \CT_h} \eta_{\ccurl,\ccurl,K} \sqrt{\chi_K^\star} \left (
h_K^{-1} \|\bphi\|_K + h_K^{-1/2} \|\bphi \times \bn\|_{\partial K}
\right ).
\end{align*}
Then, we consider $\tbv \in \BH^1(\CT_h) \cap \BH_0(\ccurl,\OO)$ and conclude the proof
thanks to Galerkin orthogonality \eqref{eq_galerkin_orthogonality} and estimate \eqref{eq_qi_Hc},
since
\begin{align*}
|b((\BE-\BE_h,\BJ\ -\ &\BJ_h),(\tbv,\bo))|
=|b((\BE-\BE_h,\BJ-\BJ_h),(\tbv-\CQ_h\tbv,\bo))|
\\
&\lesssim
\sum_{K \in \CT_h} \eta_{\ccurl,\ccurl,K}\sqrt{\chi^\star_K}
\left (
h_K^{-1} \|\tbv-\CQ_h\tbv\|_K + h_K^{-1/2} \|(\tbv-\CQ_h\tbv) \times \bn\|_{\partial K}
\right )
\\
&\lesssim
\sum_{K \in \CT_h} \eta_{\ccurl,\ccurl,K}\sqrt{\chi_K^\star} \|\grad_h \tbv\|_{\tK}
\lesssim
\eta_{\ccurl,\ccurl}\|\grad_h \tbv\|_{\chi,\OO}.
\end{align*}
\end{proof}

\begin{lemma}
\label{lemma_rel_o_H1}
We have
\begin{equation*}
|b((\BE-\BE_h,\BJ-\BJ_h),(\bo,\tbw))|
\lesssim
\eta_{\ggrad,\ddiv} \|\grad \tbw\|_{\zeta,\CT_{{\rm m},h}}
\end{equation*}
for all $\tbw \in \BH^1(\CT_{{\rm m},h}) \cap \BH_0(\ddiv,\OOm)$.
\end{lemma}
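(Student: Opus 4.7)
The plan is to mirror the approach of Lemma \ref{lemma_rel_H1_o}, but working with the second equation of the system and using the Raviart--Thomas-based quasi-interpolation $\CR_{{\rm m},h}$. I first take an arbitrary test function $\bphi \in \BH^1(\CT_{{\rm m},h}) \cap \BH_0(\ddiv,\OOm)$. Since only the $\bw$-terms of $b$ survive when the first component of the test pair is $\bo$, and since $(\BE,\BJ)$ satisfies the weak formulation, I can write
\begin{equation*}
b((\BE-\BE_h,\BJ-\BJ_h),(\bo,\bphi))
=
i\omega(\BKe,\bphi)_{\OOm}
+ \omega^2(\al\BJ_h,\bphi)_{\OOm}
- (\zeta\div\BJ_h,\div\bphi)_{\OOm}
+ i\omega(\BE_h,\bphi)_{\OOm}.
\end{equation*}

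The second step is an elementwise integration by parts of the $(\zeta\div\BJ_h,\div\bphi)_K$ term. Because $\bphi \cdot \bn$ is continuous across interior faces of $\CT_{{\rm m},h}$ and vanishes on $\partial\OOm$ (thanks to $\bphi \in \BH_0(\ddiv,\OOm)$), the boundary contributions collapse to a sum of interior face jumps of $\zeta \div \BJ_h$. Collecting the volume term then reconstructs the element residual $-\omega^2\al\BJ_h - \grad(\zeta\div\BJ_h) - i\omega\BE_h - i\omega\BKe$ defining $\eta_{\ggrad,\ddiv,K}$, yielding
\begin{equation*}
|b((\BE-\BE_h,\BJ-\BJ_h),(\bo,\bphi))|
\lesssim
\sum_{K \in \CT_{{\rm m},h}}
\eta_{\ggrad,\ddiv,K}
\sqrt{\zeta_K^\star}
\left(
h_K^{-1}\|\bphi\|_K + h_K^{-1/2}\|\bphi\cdot\bn\|_{\partial K}
\right).
\end{equation*}

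Finally, given $\tbw \in \BH^1(\CT_{{\rm m},h}) \cap \BH_0(\ddiv,\OOm)$, I invoke Galerkin orthogonality \eqref{eq_galerkin_orthogonality} with the pair $(\bo, \CR_{{\rm m},h}\tbw) \in \BW_h \times \BX_{{\rm m},h} = \mathbb V_h$ to replace $\tbw$ by $\tbw - \CR_{{\rm m},h}\tbw$ in the above estimate, and then apply the quasi-interpolation bound \eqref{eq_qi_Hdm} to absorb the weighted $L^2$ and trace norms of $\tbw - \CR_{{\rm m},h}\tbw$ into $\|\grad_h \tbw\|_{\tK_{\rm m}}$. Summing over $K$ via Cauchy--Schwarz and the finite overlap of the patches $\tK_{\rm m}$ gives the claim. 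The only ``delicate'' point is routine bookkeeping of the face contributions: one must track that the jumps live on $\CF_{{\rm m},h}^{\rm i} \subset \partial\OOm^{\rm c}$ (the zero-trace condition on $\bphi$ kills the external faces), which is precisely what makes the Raviart--Thomas space and $\CR_{{\rm m},h}$ the correct objects to use here, whereas Lemma \ref{lemma_rel_H1_o} used $\CQ_h$ and the Nédélec space in the full domain $\OO$.
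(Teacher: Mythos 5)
Your proposal is correct and follows essentially the same route as the paper's own proof: the same initial identity from the weak formulation, the same elementwise integration by parts producing the volume residual and the interior face jumps of $\zeta\div\BJ_h$, and the same use of Galerkin orthogonality with $(\bo,\CR_{{\rm m},h}\tbw)$ together with \eqref{eq_qi_Hdm}. The only (harmless) difference is that you record the trace term as $\|\bphi\cdot\bn\|_{\partial K}$, which matches \eqref{eq_qi_Hdm} slightly more precisely than the paper's $\|\bphi\|_{\partial K}$.
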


\begin{proof}
Let $\bphi \in \BH^1(\CT_{{\rm m},h}) \cap \BH_0(\ddiv,\OOm)$.
Then, we have
\begin{align*}
b((\BE-\BE_h,\BJ-\BJ_h),(\bo,\bphi))
&=
i\omega(\BKe,\bphi)_{\CT_{{\rm m},h}}+\omega^2 (\al\BJ_h,\bphi)_{\CT_{{\rm m},h}}
-
(\zeta \div \BJ_h,\div \bphi)_{\CT_{{\rm m},h}} + i\omega (\BE_h,\bphi)_{\CT_{{\rm m},h}}
\\
&=
-\left (
-\omega^2 \al \BJ_h - \grad \left (\zeta \div \BJ_h \right )
-i\omega \BE_h-i\omega\BKe,\bphi
\right )_{\CT_{{\rm m},h}}
-
\langle \zeta \div \BJ_h,\bphi \cdot \bn \rangle_{\partial \CT_{{\rm m},h}}
\\
&=
-\left (
-\omega^2 \al \BJ_h - \grad \left (\zeta \div \BJ_h \right )
-i\omega \BE_h-i\omega\BKe,\bphi
\right )_{\CT_{{\rm m},h}}
-
(\jmp{\zeta \div \BJ_h},\bphi \cdot \bn )_{\CF_{{\rm m},h}^{\rm i}}
\end{align*}
and
\begin{align*}
|b((\BE-\BE_h,\BJ-\BJ_h),(\bo,\bphi))|
\lesssim
\sum_{K\in\CT_{{\rm m},h}}\eta_{\ggrad,\ddiv,K}\sqrt{\zeta_K^\star}
\left(h_K^{-1}\|\bphi\|_K+h_K^{-1/2}\|\bphi\|_{\partial K}\right).
\end{align*}
Now, let $\tbw \in\BH^1(\CT_{{\rm m},h}) \cap \BH_0(\ddiv,\OOm)$.
By Galerkin orthogonality and the estimate \eqref{eq_qi_Hdm}, we conclude
\begin{align*}
|b((\BE\ -\ &\BE_h,\BJ-\BJ_h),(\bo,\tbw))|
=
|b((\BE-\BE_h,\BJ-\BJ_h),(\bo,\tbw-\CR_{{\rm m},h}\tbw)|
\\
&\lesssim
\sum_{K\in\CT_{{\rm m},h}}
\eta_{\ggrad,\ddiv,K}\sqrt{\zeta_K^\star}\left(
h_K^{-1}\|\tbw-\CR_{{\rm m},h}\tbw\|_K
+
h_K^{-1/2}\|(\tbw-\CR_{{\rm m},h}\tbw)\cdot\bn\|_{\partial K}
\right)
\\
&\lesssim
\sum_{K\in\CT_{{\rm m},h}}
\eta_{\ggrad,\ddiv,K}\sqrt{\zeta_K^\star}\|\grad\tbw\|_{\CT_{{\rm m},h}^K}
\lesssim
\eta_{\ggrad,\ddiv}\|\grad\tbw\|_{\zeta,\CT_{{\rm m},h}}.
\end{align*}
\end{proof}

We now establish that the proposed estimator is reliable in
Theorem \ref{theorem_rel}. The proof builds upon Lemmas \ref{lemma_rel_grad_o},
\ref{lemma_rel_o_curl}, \ref{lemma_rel_H1_o} and \ref{lemma_rel_o_H1} combined
with inf-sup condition \eqref{eq_inf_sup} and regular decomposition
\eqref{eq_regular_decomposition}.

\begin{theorem}
\label{theorem_rel}
We have
\begin{equation}
\label{eq_rel}
\enorm{(\BE-\BE_h,\BJ-\BJ_h)} \lesssim \eta.
\end{equation}
\end{theorem}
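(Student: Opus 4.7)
The plan is to combine the inf-sup condition of Assumption \ref{assumption_inf_sup} with the regular decomposition \eqref{eq_regular_decomposition} to reduce the control of the energy error to the four estimates provided by Lemmas \ref{lemma_rel_grad_o}--\ref{lemma_rel_o_H1}. More precisely, since $(\BE-\BE_h,\BJ-\BJ_h) \in \mathbb V$, applying \eqref{eq_inf_sup} gives
\begin{equation*}
\cis \enorm{(\BE-\BE_h,\BJ-\BJ_h)}
\leq
\sup_{(\bv,\bw) \in \mathbb V \setminus \{0\}}
\frac{|b((\BE-\BE_h,\BJ-\BJ_h),(\bv,\bw))|}{\enorm{(\bv,\bw)}},
\end{equation*}
so it suffices to bound the numerator by $\eta\,\enorm{(\bv,\bw)}$ (up to the hidden constants) uniformly in $(\bv,\bw)$.

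Given such a pair, I would apply the regular decomposition \eqref{eq_regular_decomposition_identity} to write $(\bv,\bw) = (\grad q + \tbv, \curl \bthe + \tbw)$ with the four regular components $q$, $\bthe$, $\tbv$, $\tbw$ belonging to the spaces assumed in Lemmas \ref{lemma_rel_grad_o}--\ref{lemma_rel_o_H1}. By sesquilinearity of $b$, we get
\begin{align*}
b((\BE-\BE_h,\BJ-\BJ_h),(\bv,\bw))
&=
b((\BE-\BE_h,\BJ-\BJ_h),(\grad q,\bo))
+
b((\BE-\BE_h,\BJ-\BJ_h),(\tbv,\bo))
\\
&\quad+
b((\BE-\BE_h,\BJ-\BJ_h),(\bo,\curl \bthe))
+
b((\BE-\BE_h,\BJ-\BJ_h),(\bo,\tbw)).
\end{align*}
Each of the four terms is then controlled by the corresponding lemma, yielding
\begin{equation*}
|b((\BE-\BE_h,\BJ-\BJ_h),(\bv,\bw))|
\lesssim
\eta\,
\bigl(
\omega\|\grad q\|_{\varepsilon,\OO}
+
\omega\|\grad_h \bthe\|_{\alpha,\OOm}
+
\|\grad_h \tbv\|_{\chi,\OO}
+
\|\grad_h \tbw\|_{\zeta,\OOm}
\bigr),
\end{equation*}
where I have bounded the individual estimator contributions $\eta_{\ddiv}$, $\eta_{\ccurl}$, $\eta_{\ccurl,\ccurl}$, $\eta_{\ggrad,\ddiv}$ by the total $\eta$.

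To finish, I would invoke the stability estimate \eqref{eq_regular_decomposition_estimate} of the regular decomposition, which bounds the sum of the four regular norms by $\crd\,\enorm{(\bv,\bw)}$. Dividing by $\enorm{(\bv,\bw)}$ and taking the supremum gives the desired inequality \eqref{eq_rel}, with the hidden constant absorbing $\cis^{-1}$ and $\crd$. There is no conceptually hard step left here: the real work was done in the four lemmas and in the construction of the regular decomposition recalled from \cite{hiptmair_pechstein_2019a,chaumontfrelet_vega_2020a}. The only mild subtlety is making sure that the regular decomposition produces components with precisely the regularity needed to apply each of Lemmas \ref{lemma_rel_grad_o}--\ref{lemma_rel_o_H1}, which is exactly what \eqref{eq_regular_decomposition} guarantees.
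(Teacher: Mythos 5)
Your proposal is correct and follows essentially the same route as the paper's own proof: inf-sup condition applied to the error, regular decomposition of the test pair, splitting $b$ by sesquilinearity into the four terms handled by Lemmas \ref{lemma_rel_grad_o}--\ref{lemma_rel_o_H1}, and the stability bound \eqref{eq_regular_decomposition_estimate} to close the argument. The only cosmetic difference is that the paper fixes a normalized near-maximizing test pair $(\bv^\star,\bw^\star)$ with $\enorm{(\bv^\star,\bw^\star)}=1$ rather than carrying the supremum over all $(\bv,\bw)$ to the end, which changes nothing of substance.
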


\begin{proof}
Since $\mathbb V$ in a Hilbert space, it follows from inf-sup
condition \eqref{eq_inf_sup} that there exists $(\bv^\star,\bw^\star) \in \mathbb V$
with $\enorm{(\bv^\star,\bw^\star)} = 1$ such that
\begin{equation*}
\enorm{(\BE-\BE_h,\BJ-\BJ_h)} \leq \cis^{-1} \Re b((\BE-\BE_h,\BJ-\BJ_j),\bv^\star,\bw^\star)).
\end{equation*}
Then, using \eqref{eq_regular_decomposition}, we have
\begin{equation*}
\bv^\star = \tbv + \grad p, \qquad \bw^\star = \tbw + \curl \bthe,
\end{equation*}
where $\tbv \in \BH^1(\CT_h) \cap \BH_0(\ccurl,\OO)$, $p \in H^1_0(\OO)$,
$\tbw \in \BH^1(\CT_{{\rm m},h}) \cap \BH_0(\ddiv,\OOm)$ and
$\bthe \in \BH^1(\CT_{{\rm m},h}) \cap \BH_0(\ccurl,\OOm)$
with
\begin{equation*}
\omega \|\grad q\|_{\varepsilon,\OO} + \omega \|\grad_h \bthe\|_{\alpha,\OOm}
+
\|\grad_h \tbv\|_{\chi,\OO} + \|\grad_h \tbw\|_{\zeta,\OOm}
\lesssim \enorm{(\bv^\star,\bw^\star)} = 1,
\end{equation*}
and \eqref{eq_rel} follows by linearity and the estimates established in
Lemma \ref{lemma_rel_grad_o}, \ref{lemma_rel_o_curl}, \ref{lemma_rel_H1_o}
and \ref{lemma_rel_o_H1}.
\end{proof}

\subsection{Efficiency}

We show that the proposed estimator is efficient. To this end, we establish four results that provide upper bounds for each of the four terms constituting our estimator.

\begin{lemma}
The estimate
\begin{equation}
\label{eq_eff_div}
\eta_{\ddiv,K}
\lesssim
(1 + \kPK h_K)
\!\enorm{(\BE-\BE_h,\BJ-\BJ_h)}_{\tK} + \osc_{\tK},
\end{equation}
holds true for all $K \in \CT_h$.
\end{lemma}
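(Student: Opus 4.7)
The plan is to bound the volume and face-jump contributions of $\eta_{\ddiv,K}$ separately via the bubble-function machinery of Section~\ref{bubbles}. Introduce the polynomial surrogate $R_K^h \eq \div(i\omega\ee\BE_h+\BJ_h-\pi_h\BJe) \in \CP_p(K)$ of $R_K \eq \div(i\omega\ee\BE_h+\BJ_h-\BJe)$; the difference $R_K-R_K^h = -\div(\BJe-\pi_h\BJe)$ is precisely the divergence piece of $\osc_K$. Using \eqref{eq_norm_bubble} and the hidden equation \eqref{eq_hidden_div} to write $R_K = \div(i\omega\ee(\BE_h-\BE) + (\BJ_h-\BJ))$, one obtains $\|R_K^h\|_K^2 \lesssim (R_K, b_K R_K^h)_K + \|R_K-R_K^h\|_K\|R_K^h\|_K$. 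Since $b_K R_K^h \in H^1_0(K)$, integration by parts on the $\ee(\BE_h-\BE)$ piece, combined with Cauchy-Schwarz and the inverse estimate \eqref{eq_inv_bubble}, contributes the factor $h_K^{-1}\omega\sqrt{\eps_K^\star}\|\BE-\BE_h\|_{\eps,K}\|R_K^h\|_K$. The key observation is to \emph{not} integrate by parts the $\div(\BJ_h-\BJ)$ piece: bounding $(\div(\BJ_h-\BJ),b_K R_K^h)_K$ directly by $(\zeta_K^\star)^{-1/2}\|\div(\BJ-\BJ_h)\|_{\zeta,K}\|R_K^h\|_K$ brings in the energy-norm quantity $\|\div(\BJ-\BJ_h)\|_{\zeta,K}$ rather than the uncontrolled $\|\BJ-\BJ_h\|_K$. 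Dividing by $\|R_K^h\|_K$, multiplying by $h_K/\sqrt{\eps_K^\star}$, and recognizing $h_K/\sqrt{\eps_K^\star\zeta_K^\star} = \kPK h_K$ yields
\begin{equation*}
\frac{h_K}{\sqrt{\eps_K^\star}}\|R_K\|_K \lesssim \omega\|\BE-\BE_h\|_{\eps,K} + \kPK h_K\|\div(\BJ-\BJ_h)\|_{\zeta,K} + \osc_K.
\end{equation*}

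For an interior face $F \subset \partial K$, let $J_F \eq \jmp{\ee\BE_h}\cdot\bn$ and test against $\phi \eq b_F\LL_F(J_F) \in H^1_0(\tF)$, whose scalings are provided by \eqref{eq_ext_bubble}. Elementwise integration by parts on $\CT_{F,h}$ gives $(J_F,\phi)_F = \sum_{K'}[(\div(\ee\BE_h),\phi)_{K'} + (\ee\BE_h,\grad\phi)_{K'}]$. The identities $i\omega\div(\ee\BE_h) = R_{K'} + \div(\BJe-\BJ_h)$ and $i\omega\div(\ee\BE) = \div(\BJe-\BJ)$ (from \eqref{eq_hidden_div}) imply $\ee\BE \in \BH(\ddiv,\tF)$ and $\jmp{\ee\BE}\cdot\bn = 0$ on $F$; a short rearrangement, integrating the $\ee\BE$ part back by parts and cancelling the vanishing jump, then yields
\begin{equation*}
(J_F,\phi)_F = \sum_{K'\in\CT_{F,h}}\left[\frac{1}{i\omega}(R_{K'} + \div(\BJ-\BJ_h),\phi)_{K'} + (\ee(\BE_h-\BE),\grad\phi)_{K'}\right].
\end{equation*}
Cauchy-Schwarz with the scalings $\|\phi\|_{K'} \lesssim h_F^{1/2}\|J_F\|_F$ and $\|\grad\phi\|_{K'} \lesssim h_F^{-1/2}\|J_F\|_F$, followed by multiplication by $\omega h_K^{1/2}/\sqrt{\eps_K^\star}$ (using $h_F\sim h_K$ from shape regularity) and insertion of the volume bound for each $\|R_{K'}\|_{K'}$, produces $\frac{\omega h_K^{1/2}}{\sqrt{\eps_K^\star}}\|J_F\|_F \lesssim (1+\kPK h_K)\enorm{(\BE-\BE_h,\BJ-\BJ_h)}_{\tK} + \osc_{\tK}$. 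Summing over the faces of $K$ and combining with the volume estimate closes \eqref{eq_eff_div}.

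The main obstacle is the mismatch between the $1/\sqrt{\eps_K^\star}$ weight carried by $\eta_{\ddiv,K}$ and the $\alpha^\star,\zeta^\star$ weights of the $\BH_0(\ddiv,\OOm)$ piece of the energy norm. A naive integration by parts of $\BJ_h-\BJ$ would require controlling $\|\BJ-\BJ_h\|_K$, which is not dominated by the energy norm with a purely material-contrast constant. Keeping the divergence intact instead produces the clean factor $\kPK h_K = h_K/\sqrt{\eps_K^\star\zeta_K^\star}$, which is exactly the ratio that matches the $\eps^\star$-weighting of the estimator against the $\zeta^\star$-weighting of $\|\div(\BJ-\BJ_h)\|_{\zeta,K}$ in the energy norm.
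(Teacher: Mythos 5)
Your proposal is correct and follows essentially the same route as the paper: the volume term is handled with the element bubble $b_K$, integrating by parts only the $\ee(\BE-\BE_h)$ contribution while keeping $\div(\BJ-\BJ_h)$ intact so that the $\kPK h_K = h_K/\sqrt{\eps_K^\star\zeta_K^\star}$ factor emerges, and the face term is handled with $b_F\LL_F(\jmp{\ee\BE_h}\cdot\bn)$, the continuity of $\ee\BE\cdot\bn$ across $F$, and the hidden equation \eqref{eq_hidden_div} to reduce $\divh(\ee(\BE-\BE_h))$ to the already-bounded volume residual. The only differences are cosmetic bookkeeping (you carry $R_{K'}$ explicitly into the face identity, the paper bounds $\|\divh(\ee(\BE-\BE_h))\|_{\tF}$ as a separate step), so the two arguments coincide.
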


\begin{proof}
Using \eqref{eq_wavenumbers}, we first record that the estimate
\begin{equation}
\label{tmp_div_BJ_BJh}
\frac{h_K}{\sqrt{\eps_K^\star}} \|\div(\BJ-\BJ_h)\|_K
\lesssim
\kPK h_K \enorm{(\BE-\BE_h,\BJ-\BJ_h)}_K,
\end{equation}
holds true for any $K \in \CT_h$.

We then fix $K \in \CT_h$, and introduce the notations
$\br_K \eq i\omega \ee \BE_h + \BJ_h - \BJe$
and $\br_K^h \eq \pi_h \br_K$. Recalling \eqref{eq_hidden_div},
we have
\begin{equation*}
-\div \br_K^h
=
i\omega \div (\ee(\BE-\BE_h)) + \div (\BJ-\BJ_h) - \div (\BJe-\pi_h\BJ_e),
\end{equation*}
and thanks to \eqref{eq_norm_bubble}
\begin{align}
\label{tmp_div_brKh}
&\|\div \br_K^h\|_{0,K}^2
\lesssim
|(b_K \div \br_K^h,\div \br_K^h)_K|
\\ \nonumber
&\lesssim
|(b_K \div \br_K^h,\div (i\omega\ee(\BE-\BE_h)))_K|
+
|(b_K \div \br_K^h,\div (\BJ-\BJ_h)+ \div (\BJe-\pi_h\BJe))_K|.
\end{align}
Then, we use \eqref{eq_inv_bubble} to estimate the two terms in the right-hand side of
\eqref{tmp_div_brKh} with
\begin{equation*}
|(b_K \div \br_K^h,\div (i\omega \ee(\BE-\BE_h)))_K|
=
|(\grad (b_K \div \br_K^h),\omega\ee(\BE-\BE_h))_K|
\lesssim
h^{-1}_K\|\div \br_K^h\| \omega\|\ee(\BE-\BE_h)\|_K
\end{equation*}
and
\begin{equation*}
|(b_K \div \br_K^h,\div (\BJ-\BJ_h)+ \div (\BJe-\pi_h\BJe))_K|
\lesssim
\|\div \br_K^h\|_K
\left (
\|\div (\BJ-\BJ_h)\|_K + \|\div (\BJe-\pi_h\BJe)\|_K
\right ),
\end{equation*}
and it follows that
\begin{equation*}
\frac{h_K}{\sqrt{\eps_K^\star}} \|\div \br_K^h\|_K
\lesssim
\omega \|\BE-\BE_h\|_{\ee,K}
+
\frac{h_K}{\sqrt{\eps_K^\star}} \|\div(\BJ-\BJ_h)\|_K
+
\frac{h_K}{\sqrt{\eps_K^\star}} \|\div (\BJe-\pi_h \BJe)\|_K.
\end{equation*}
After observing $\br_K^h - \br_K = \BJe-\pi_h \BJe$, we conclude that
\begin{equation}
\label{tmp_eff_div_K}
\frac{h_K}{\sqrt{\eps_K^\star}} \|\div \br_K\|_K
\lesssim
(1+\kPK h_K) \enorm{(\BE-\BE_h,\BJ-\BJ_h)}_K + \osc_K.
\end{equation}
Now, if $F \in \CF_K$, we let $w_F \eq b_F \LL_F(\jmp{\ee\BE_h} \cdot \bn_F)$.
Since $\ee\BE \in \BH(\ddiv,\tF)$ and $ w_F\in H_0^1(\tF)$, we
can employ integration by parts and \eqref{eq_norm_bubble} to show that
\begin{equation*}
\|\jmp{\ee\BE_h} \cdot \bn_F\|_F^2
\lesssim
|\langle\jmp{\ee\BE_h} \cdot \bn_F,w_F\rangle_F|
=
|(\ee(\BE-\BE_h),\grad w_F)_{\tF} + (\divh (\ee (\BE-\BE_h)),w_F)_{\tF}|,
\end{equation*}
and it follows from \eqref{eq_ext_bubble} that
\begin{equation}
\label{tmp_eff_div_F}
\frac{\omega h_K^{1/2}}{\sqrt{\eps_K^\star}}
\|\jmp{\ee\BE_h}\cdot \bn_F\|_F
\lesssim
\omega\|\BE-\BE_h\|_{\ee,\tF}
+
\frac{\omega h_K}{\sqrt{\eps_K^\star}} \|\divh (\ee (\BE-\BE_h))\|_{\tF}.
\end{equation}
Recalling \eqref{eq_hidden_div}, we have
\begin{equation*}
\divh (i\omega\ee(\BE-\BE_h))
=
\divh (i\omega\ee\BE_h + \BJ_h-\BJe) + \div (\BJ-\BJ_h),
\end{equation*}
and \eqref{eq_wavenumbers}, \eqref{tmp_div_BJ_BJh} as well as \eqref{tmp_eff_div_K}
show that
\begin{equation}
\label{tmp_eff_div_F2}
\frac{\omega h_K}{\sqrt{\eps_K^\star}} \|\divh(\ee(\BE-\BE_h))\|_{\tF}
\lesssim
(1+ \kPK h_K) \enorm{(\BE-\BE_h,\BJ-\BJ_h)}_{\tF}+\osc_{\tK}.
\end{equation}
Then, \eqref{eq_eff_div} follows from \eqref{tmp_eff_div_K}, \eqref{tmp_eff_div_F}
and \eqref{tmp_eff_div_F2}.
\end{proof}

\begin{lemma}
The estimate
\begin{equation}
\label{eq_eff_curl}
\eta_{\ccurl,K}
\lesssim
\left(1 + \frac{\cJK}{\cEK} \kPK h_K \right)
\!\enorm{(\BE-\BE_h,\BJ-\BJ_h)}_{\tK}
\end{equation}
holds true for all $K \in \CT_h$.
\end{lemma}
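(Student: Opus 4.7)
The proof parallels the argument for \eqref{eq_eff_div}, with the hidden curl equation \eqref{eq_hidden_curl} in $\OOm$, the projection $\varrho_h$, and $\curl$-based bubble estimates playing the roles of their divergence counterparts. The case $K \notin \CT_{{\rm m},h}$ is vacuous since $\eta_{\ccurl,K} = 0$ by convention, so I focus on $K \in \CT_{{\rm m},h}$. The starting point is the algebraic identity $\frac{h_K}{\sqrt{\alpha_K^\star\chi_K^\star}} = \frac{\cJK}{\cEK}\kPK h_K$, which follows immediately from \eqref{eq_wavenumbers}. It yields in particular $\frac{h_K}{\sqrt{\alpha_K^\star}}\|\curl(\BE-\BE_h)\|_K \lesssim \frac{\cJK}{\cEK}\kPK h_K\,\enorm{(\BE-\BE_h,\BJ-\BJ_h)}_K$, which is where the wavenumber prefactor in \eqref{eq_eff_curl} ultimately originates.

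For the element residual, I introduce $\br_K \eq i\omega\al\BJ_h - \BE_h - \BKe$ and $\br_K^h \eq i\omega\al\BJ_h - \BE_h - \varrho_h\BKe \in \BCP_{p+1}(K)$. Combined with \eqref{eq_hidden_curl}, this yields
$$\curl\br_K^h = -i\omega\curl(\al(\BJ-\BJ_h)) + \curl(\BE-\BE_h) + \curl(\BKe-\varrho_h\BKe).$$
Applying the polynomial bubble estimate \eqref{eq_norm_bubble} to $\curl\br_K^h$ and testing this identity against $b_K\curl\br_K^h$, the $\al(\BJ-\BJ_h)$ contribution is handled by integration by parts (admissible because $b_K|_{\partial K} = 0$) together with the vector analogue of \eqref{eq_inv_bubble}, producing $\omega h_K^{-1}\sqrt{\alpha_K^\star}\|\BJ-\BJ_h\|_{\al,K}$; the other two terms are controlled by Cauchy--Schwarz. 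Cancelling $\|\curl\br_K^h\|_K$ and multiplying by $h_K/\sqrt{\alpha_K^\star}$, I obtain
$$\frac{h_K}{\sqrt{\alpha_K^\star}}\|\curl\br_K^h\|_K \lesssim \omega\|\BJ-\BJ_h\|_{\al,K} + \frac{h_K}{\sqrt{\alpha_K^\star}}\|\curl(\BE-\BE_h)\|_K + \frac{h_K}{\sqrt{\alpha_K^\star}}\|\curl(\BKe-\varrho_h\BKe)\|_K,$$
where the last summand is absorbed in $\osc_K$. A triangle inequality using $\br_K^h - \br_K = \BKe - \varrho_h\BKe$ then controls the element contribution to $\eta_{\ccurl,K}$.

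For the face jump term I pick $\bw_F \eq b_F\LL_F(\jmp{\al\BJ_h}\times\bn_F) \in \BH^1_0(\tF_{\rm m})$. Since $\curl(i\omega\al\BJ - \BE - \BKe) = 0$ in $\OOm$, the tangential trace of $\al\BJ$ is continuous across $F$, so $\jmp{\al\BJ_h}\times\bn_F = -\jmp{\al(\BJ - \BJ_h)}\times\bn_F$. The bubble estimate \eqref{eq_norm_bubble} together with elementwise integration by parts on $\CT_{{\rm m},F,h}$ and \eqref{eq_ext_bubble} yields
$$\frac{\omega h_K^{1/2}}{\sqrt{\alpha_K^\star}}\|\jmp{\al\BJ_h}\times\bn_F\|_F \lesssim \omega\|\BJ-\BJ_h\|_{\al,\tF_{\rm m}} + \frac{\omega h_K}{\sqrt{\alpha_K^\star}}\|\curl_h(\al(\BJ-\BJ_h))\|_{\tF_{\rm m}}.$$
Invoking \eqref{eq_hidden_curl} once more on each element, $i\omega\curl(\al(\BJ-\BJ_h)) = \curl(\BE-\BE_h) - \curl(i\omega\al\BJ_h - \BE_h - \BKe)$; the first term is handled by the preliminary wavenumber identity, and the second by the already-established element bound summed over $\tK_{\rm m}$.

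The main obstacle is that $\curl\BJ$ does not enter the energy norm---only $\omega\|\BJ\|_{\al}$ and $\|\div\BJ\|_{\zeta}$ do---so the quantities $\curl(\al(\BJ-\BJ_h))$ that naturally arise from the bubble argument cannot be bounded directly by $\enorm{\cdot}$. The hidden equation \eqref{eq_hidden_curl} is what rescues the argument: it rewrites such quantities as $\curl(\BE-\BE_h)$ plus the strong-form residual, and the identity $\frac{h_K}{\sqrt{\alpha_K^\star\chi_K^\star}} = \frac{\cJK}{\cEK}\kPK h_K$ then delivers the exact wavenumber weight in \eqref{eq_eff_curl}. Careful bookkeeping of the coefficient weights is required to avoid picking up spurious contrast factors.
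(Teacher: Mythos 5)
Your proposal is correct and follows essentially the same route as the paper's proof: the same pair $\br_K$, $\br_K^h$ built from $\varrho_h\BKe$, the same bubble/integration-by-parts treatment of the element residual, the same face function $\bw_F = b_F\LL_F(\jmp{\al\BJ_h}\times\bn_F)$, and the same use of \eqref{eq_hidden_curl} together with the identity $h_K/\sqrt{\alpha_K^\star\chi_K^\star} = (\cJK/\cEK)\kPK h_K$ to convert $\curl(\al(\BJ-\BJ_h))$ into energy-norm quantities. Note only that, exactly as in the paper's own argument, the bound you obtain carries an additional $\osc_{\tK}$ term that is missing from the displayed statement \eqref{eq_eff_curl} (an apparent typo, since it reappears in \eqref{eq_efficiency}).
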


\begin{proof}
Thanks to \eqref{eq_hidden_curl}, \eqref{eq_norm_bubble} and after integrating by parts,
we obtain that
\begin{align}
\label{tmp_curl_BJh_BEh}
&\|\curl(i\omega\al\BJ_h-\BE_h-\varrho_h\BKe)\|_K^2
\\ \nonumber
&\lesssim
|(b_K\curl(i\omega\al\BJ_h-\BE_h-\varrho_h\BKe),\curl(i\omega\al\BJ_h-\BE_h-\varrho_h\BKe)_K|
\\ \nonumber
&\leq
|(\curl(b_K\curl(i\omega\al\BJ_h-\BE_h-\varrho_h\BKe)),i\omega\al(\BJ-\BJ_h))_K|
\\
&+|(b_K\curl(i\omega\al\BJ_h-\BE_h-\varrho_h\BKe),\curl(\BE-\BE_h)+\curl(\BKe-\varrho_h\BKe))_K|\nonumber
\end{align}
for all $K \in \CT_h$.

Now, for the terms in the right-hand side of \eqref{tmp_curl_BJh_BEh}, we use \eqref{eq_inv_bubble} and have
\begin{equation*}
|(\curl(b_K\curl(i\omega\al\BJ_h-\BE_h-\varrho_h\BKe)),i\omega\al(\BJ-\BJ_h))_K|
\lesssim
h_K^{-1}\|\curl(i\omega\al\BJ_h-\BE_h-\varrho_h\BKe)\|_K\omega\|\al(\BJ-\BJ_h)\|_K
\end{equation*}
and
\begin{align*}
|(b_K\curl(i\omega\al\BJ_h-\BE_h-\varrho_h\BKe)&,\curl(\BE-\BE_h)+\curl(\BKe-\varrho_h\BKe))_K|
\\
&\lesssim
\|\curl(i\omega\al\BJ_h-\BE_h-\varrho_h\BKe)\|_K\|\curl(\BE-\BE_h)\|_K
\\
&+\|\curl(i\omega\al\BJ_h-\BE_h-\varrho_h\BKe)\|_K\|\curl(\BKe-\varrho_h\BKe)\|_K.
\end{align*}
Then,
\begin{multline*}
\frac{h_K}{\sqrt{\alpha_K^\star}}\|\curl(i\omega\al\BJ_h-\BE_h-\varrho_h\BKe)\|_K
\lesssim
\\
\omega\|\BJ-\BJ_h\|_{\al,K}
+
\frac{h_K}{\sqrt{\chi_K^\star\alpha_K^\star}}\|\curl(\BE-\BE_h)\|_{\cc,K}
+
\frac{h_K}{\sqrt{\chi_K^\star\alpha_K^\star}}\|\curl(\BKe-\varrho_h\BKe)\|_{\cc,K},
\end{multline*}
and recalling \eqref{eq_wavenumbers}, we conclude that
\begin{align}\label{tmp_eff_curl_K}
\frac{h_K}{\sqrt{\alpha_K^\star}}\|\curl(i\omega\al\BJ_h-\BE_h-\BKe)\|_K
\lesssim
\left(1+\frac{\cJK}{\cEK}\kPK h_K\right)\!\enorm{(\BE-\BE_h,\BJ-\BJ_h)}_K+\osc_K.
\end{align}
On the other hand, for a fixed $K \in \CT_h$, if $F \in \CF_K$ we set
$\bw_F \eq b_F \LL_F(\jmp{\al\BJ_h} \times \bn_F)$. Recalling that $\al \BJ \in\BH(\ccurl,\tF)$
and $b_F \in H^1_0(\tF)$, and using \eqref{eq_norm_bubble}, we have
\begin{align*}
\|\jmp{\al\BJ_h}\times\bn_F\|_F^2&\lesssim|\langle\jmp{\al\BJ_h}\times\bn_F,\bw_F\rangle_F|\\
&=
|(\al(\BJ-\BJ_h),\curl\bw_F)_{\tF}+(\curl(\al(\BJ-\BJ_h)),\bw_F)_{\tF}|,
\end{align*}
and thanks to \eqref{eq_ext_bubble}, we get that
\begin{align}\label{tmp_eff_curl_F}
\frac{\omega h_K^{1/2}}{\sqrt{\alpha_K^\star}}
\|\jmp{\al\BJ_h}\times\bn_F\|_F
\lesssim
\omega\|\BJ-\BJ_h\|_{\al,\tF}
+
\frac{\omega h_K}{\sqrt{\alpha_K^\star}}
\|\curl(\al(\BJ-\BJ_h))\|_{\tF}.
\end{align}
Invoking \eqref{eq_hidden_curl}, we have
\begin{align*}
\curl(i\omega\al(\BJ-\BJ_h))
=
-\curl(i\omega\al\BJ_h-\BE_h-\BKe)
+\curl(\BE-\BE_h)
\end{align*}
and then, \eqref{eq_wavenumbers} and \eqref{tmp_eff_curl_K} let us conclude that
\begin{align}\label{tmp_eff_curl_F2}
\frac{\omega h_K}{\sqrt{\alpha_K^\star}}\|\curl(\al(\BJ-\BJ_h))\|_{\tF}
\lesssim
\left(1+\frac{\cJK}{\cEK}\kPK h_K\right)\!\enorm{(\BE-\BE_h,\BJ-\BJ_h)}_{\tF}
+
\osc_{\tK}.
\end{align}
Finally, \eqref{eq_eff_curl} follows from \eqref{tmp_eff_curl_K}, \eqref{tmp_eff_curl_F}
and \eqref{tmp_eff_curl_F2}.
\end{proof}

\begin{lemma}
The estimate
\begin{align}
\label{eq_eff_curl_curl}
\eta_{\ccurl,\ccurl,K}
\lesssim
\left(1+\left(\kEK+\frac{\cJK}{\cEK}\kPK\right)h_K\right)
\!\enorm{(\BE-\BE_h,\BJ-\BJ_h)}_{\tK}+\osc_{\tK}
\end{align}
holds true for all $K \in \CT_h$.
\end{lemma}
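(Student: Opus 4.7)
The plan is to adapt the bubble-function strategy employed in the two preceding efficiency lemmas. Introduce on each element $K \in \CT_h$ the polynomial volume residual
\[
\br_K \eq -\omega^2 \ee \BE_h + \curl (\cc \curl \BE_h) + i\omega \BJ_h - i\omega \pi_h \BJe,
\]
where $\BJe$ has been replaced by its projection $\pi_h\BJe$ so that $\br_K$ is piecewise polynomial on $K$. Using the first equation of \eqref{eq_maxwell_drude_strong}, this rewrites as
\[
\br_K = \omega^2 \ee(\BE-\BE_h) - \curl(\cc \curl (\BE-\BE_h)) - i\omega(\BJ-\BJ_h) + i\omega(\BJe - \pi_h\BJe),
\]
and the difference with the original volume residual appearing in $\eta_{\ccurl,\ccurl,K}$ is the oscillation term $i\omega(\pi_h\BJe-\BJe)$, which will be absorbed in $\osc_K$.

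The first step bounds $\|\br_K\|_K$. I would apply \eqref{eq_norm_bubble} to get $\|\br_K\|_K^2 \lesssim |(b_K\br_K,\br_K)_K|$ and expand the right-hand side through the identity above. Three of the four resulting contributions are controlled directly by Cauchy--Schwarz. The exception is the $\curl(\cc\curl(\BE-\BE_h))$ term, where elementwise integration by parts transfers the curl onto $b_K\br_K$ (the boundary integral vanishes since $b_K|_{\partial K}=0$), and the componentwise version of \eqref{eq_inv_bubble} gives $\|\curl(b_K\br_K)\|_K \lesssim h_K^{-1}\|\br_K\|_K$. Dividing by $\|\br_K\|_K$ and multiplying by $h_K/\sqrt{\chi_K^\star}$, the identities $\omega/\sqrt{\chi_K^\star} = \kEK/\sqrt{\eps_K^\star}$ and $1/\sqrt{\chi_K^\star\alpha_K^\star} = \kPK\cJK/\cEK$ that follow from \eqref{eq_wavenumbers} yield exactly the prefactors $\kEK h_K$ and $(\cJK/\cEK)\kPK h_K$ in front of $\enorm{(\BE-\BE_h,\BJ-\BJ_h)}_K$, the remaining contribution becoming $\osc_K$.

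The second step treats the face jumps. For an interior face $F \in \CF_K$, set $\bg_F \eq \jmp{\cc\curl\BE_h}\times\bn_F$ and $\bw_F \eq b_F\LL_F(\bg_F)\in\BH^1_0(\tF)$. By \eqref{eq_norm_bubble}, $\|\bg_F\|_F^2 \lesssim |\langle\bg_F,\bw_F\rangle_F|$. Since $(\cc\curl\BE)\times\bn$ is continuous across $F$, elementwise integration by parts on the two elements of $\CT_{F,h}$ leads to
\[
\langle\bg_F,\bw_F\rangle_F = (\cc\curl(\BE-\BE_h),\curlh\bw_F)_{\tF} - (\curlh(\cc\curl(\BE-\BE_h)),\bw_F)_{\tF}.
\]
The first term is bounded directly. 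For the second, rewriting $\curlh(\cc\curl(\BE-\BE_h))|_{K'}$ via the identity from the first paragraph reduces it to a linear combination of $L^2$ norms of $\ee(\BE-\BE_h)$, $\BJ-\BJ_h$, $\BJe-\pi_h\BJe$ and the already controlled residual $\br_{K'}$. The extension estimate \eqref{eq_ext_bubble} supplies $\|\bw_F\|_{\tF}+h_F\|\curl\bw_F\|_{\tF}\lesssim h_F^{1/2}\|\bg_F\|_F$; dividing by $\|\bg_F\|_F$ and scaling by $h_K^{1/2}/\sqrt{\chi_K^\star}$ then provides a bound on the face contribution by the same prefactor times $\enorm{\cdot}_{\tK}$, plus $\osc_{\tK}$.

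The main obstacle is the wavenumber bookkeeping. Producing the exact prefactor $\kEK + (\cJK/\cEK)\kPK$ requires carefully converting $\omega^2/\sqrt{\chi_K^\star}$ and $\omega/\sqrt{\chi_K^\star\alpha_K^\star}$ into the $\kEK, \kJK, \kPK$ notation through \eqref{eq_wavenumbers}, while absorbing the residual factors $1/\sqrt{\eps_K^\star}$ and $1/\sqrt{\alpha_K^\star}$ into the hidden constant via the material-contrast dependence built into $\lesssim$. Once this accounting is done, combining the volume and face estimates yields \eqref{eq_eff_curl_curl}.
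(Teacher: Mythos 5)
Your proposal is correct and follows essentially the same route as the paper: a bubble-function bound on the (projected) volume residual $\br_K$ using \eqref{eq_norm_bubble}, \eqref{eq_inv_bubble} and elementwise integration by parts, followed by the face-jump bound via $\bw_F = b_F\LL_F(\jmp{\cc\curl\BE_h}\times\bn_F)$, the tangential continuity of $\cc\curl\BE$, the extension estimate \eqref{eq_ext_bubble}, and the substitution of the strong equation to reuse the volume bound. The only cosmetic difference is that you build the projection $\pi_h\BJe$ into $\br_K$ from the start, whereas the paper projects the full residual afterwards and accounts for the difference $\br_K^h-\br_K = i\omega(\BJe-\pi_h\BJe)$; your wavenumber identities $\omega/\sqrt{\chi_K^\star}=\kEK/\sqrt{\eps_K^\star}$ and $1/\sqrt{\chi_K^\star\alpha_K^\star}=\kPK\cJK/\cEK$ are exactly the ones the paper uses implicitly via \eqref{eq_wavenumbers}.
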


\begin{proof}
For a fixed $ K\in\CT_h $, we set $ \br_K:=-\omega^2 \ee \BE_h+\curl(\cc\curl \BE_h)+i\omega\BJ_h-i\omega\BJ_{\rm e} $ and $\br_K^h:=\pi_h\br_K$. Then, considering \eqref{eq_maxwell_drude_strong}, \eqref{eq_norm_bubble} and integrating by parts, we get 
\begin{multline}\label{tmp_curl_curl_brKh}
\|\br_K^h\|_K^2
\lesssim
|(\br_K^h,b_K\br_K^h)_K| \\
\lesssim
|(\omega^2\ee(\BE-\BE_h),b_K\br_K^h)_K|+|(\cc\curl(\BE-\BE_h),\curl(b_K\br_K^h))_K| \\
+|(i\omega(\BJ-\BJ_h),b_K\br_K^h)_K|+|(i\omega(\BJe-\pi_h\BJe),b_K\br_K^h)_K|.
\end{multline}
To bound the right-hand side terms in \eqref{tmp_curl_curl_brKh}, we use \eqref{eq_inv_bubble} to obtain
\begin{align*}
|(\omega^2\ee(\BE-\BE_h),b_K\br_K^h)_K|
&\lesssim
\omega^2\|\ee(\BE-\BE_h)\|_K\|\br_K^h\|_K,
\\
|\cc\curl(\BE-\BE_h),\curl(b_K\br_K^h))_K|
&
\lesssim
h_K^{-1}\|\cc\curl(\BE-\BE_h)\|_K\|\br_K^h\|_K,
\\
|(i\omega(\BJ-\BJ_h),b_K\br_K^h)_K|
&\lesssim
\omega\|\BJ-\BJ_h\|_K\|\br_K^h\|_K,
\\
|(i\omega(\BJe-\pi_h\BJe),b_K\br_K^h)_K|
&\lesssim
\omega\|\BJe-\pi_h\BJe\|_K\|\br_K^h\|_K,
\end{align*}
and we have that
\begin{align*}
\frac{h_K}{\sqrt{\chi_K^\star}}\|\br_K^h\|_K
&\lesssim
\frac{\omega^2 h_K}{\sqrt{\chi_K^\star}}\|\ee(\BE-\BE_h)\|_K
+
\frac{1}{\sqrt{\chi_K^\star}}\|\cc\curl(\BE-\BE_h)\|_K
+
\frac{\omega h_K}{\sqrt{\chi_K^\star}}\|\BJ-\BJ_h\|_K
+
\osc_K.
\end{align*}
Noticing that $\br_K^h-\br_K = i\omega(\BJe-\pi_h\BJe)$ and recalling \eqref{eq_wavenumbers},
we get
\begin{align}\label{tmp_eff_curl_curl_K}
\frac{h_K}{\sqrt{\chi_K^\star}}\|\br_K\|_K
&\lesssim
\left(1+\left(\kEK+\frac{\cJK}{\cEK}\kPK\right) h_K\right)
\enorm{(\BE-\BE_h,\BJ-\BJ_h)}_{K}
+
\osc_K.
\end{align}
Now, for $F \in \CF_h$, we define $\bw_F \eq b_F \LL_F(\jmp{\ee\BE_h} \cdot \bn_F)$.
Since $\cc\curl\BE\in\BH(\ccurl,\tF)$ and $\bw_F \in \BH_0^1(\tF)$, using \eqref{eq_norm_bubble}
and integrating by parts, we see that
\begin{align*}
\|\jmp{\cc\curl\BE_h}\times\bn_F\|_F^2
&\lesssim
|\langle\jmp{\cc\curl\BE_h}\times\bn_F,\bw_F\rangle_F|
\\
&=
|(\curl(\cc\curl(\BE-\BE_h)),\bw_F)_{\tF}-(\cc\curl(\BE-\BE_h),\curl\bw_F)_{\tF}|.
\end{align*}
Thanks to \eqref{eq_maxwell_drude_strong}, we deduce that
\begin{align*}
\|\jmp{\cc\curl\BE_h}\times\bn_F\|_F^2
&\lesssim
|(\omega^2\ee(\BE-\BE_h),\bw_F)_{\tF}-(\cc\curl(\BE-\BE_h),\curl\bw_F)_{\tF}
\\
&\quad\ -(i\omega(\BJ-\BJ_h),\bw_F)_{\tF}+(\br_K,\bw_K)_{\tF}|
\end{align*}
and then, thanks to \eqref{eq_ext_bubble}, we have that
\begin{align*}
\frac{h_K^{1/2}}{\sqrt{\chi_K^\star}}\|\jmp{\cc\curl\BE_h}\times\bn_F\|_F
&\lesssim
\frac{\omega^2 h_K}{\sqrt{\chi_K^\star}}\|\ee(\BE-\BE_h)\|_{\tF}
+
\frac{1}{\sqrt{\chi_K^\star}}\|\cc\curl(\BE-\BE_h)\|_{\tF}
\\
&\;+
\frac{\omega h_K}{\sqrt{\chi_K^\star}}\|\BJ-\BJ_h\|_{\tF}
+
\frac{h_K}{\sqrt{\chi_K^\star}}\|\br_K\|_{\tF}.
\end{align*}
Finally, \eqref{eq_wavenumbers} shows that
\begin{align}\label{tmp_eff_curl_curl_F}
\frac{h_K^{1/2}}{\sqrt{\chi_K^\star}}\|\jmp{\cc\curl\BE_h}\times\bn_F\|_F
&\lesssim
\left(1+\left(\kEK+\frac{\cJK}{\cEK}\kPK\right)\!h_K\right)\!\enorm{(\BE-\BE_h,\BJ-\BJ_h)}_{\tF}
\\ \nonumber
&+\;\frac{h_K}{\sqrt{\chi_K^\star}}\|\br_K\|_{\tF}
\end{align}
and hence \eqref{eq_eff_curl_curl} is a direct consequence of \eqref{tmp_eff_curl_curl_K} and \eqref{tmp_eff_curl_curl_F}.
\end{proof}

\begin{lemma}
The estimate
\begin{align}\label{eq_eff_grad_div}
\eta_{\ggrad,\ddiv,K}
\lesssim(1+(\kEK+\kJK)h_K)\!\enorm{(\BE-\BE_h,\BJ-\BJ_h)}_{\tK}+\osc_{\tK}
\end{align}
holds true for all $K \in \CT_h$.
\end{lemma}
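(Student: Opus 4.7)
The plan is to mirror the proofs of the three preceding efficiency lemmas, adapting the argument to the second equation of \eqref{eq_maxwell_drude_strong}. Introduce the volume residual
\begin{equation*}
\br_K \eq -\omega^2 \al \BJ_h - \grad(\zeta \div \BJ_h) - i\omega \BE_h - i\omega \BKe,
\end{equation*}
and its polynomial counterpart $\br_K^h$ obtained by substituting $\varrho_h \BKe$ for $\BKe$, so that $\br_K - \br_K^h = -i\omega(\BKe - \varrho_h\BKe)$. The second equation of \eqref{eq_maxwell_drude_strong} then yields the error representation
\begin{equation*}
\br_K = \omega^2\al(\BJ-\BJ_h) + \grad(\zeta \div(\BJ-\BJ_h)) + i\omega(\BE-\BE_h).
\end{equation*}

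For the volume contribution of $\eta_{\ggrad,\ddiv,K}$, I would apply \eqref{eq_norm_bubble} componentwise to obtain $\|\br_K^h\|_K^2 \lesssim |(b_K \br_K^h,\br_K^h)_K|$, substitute the error representation for $\br_K^h$, and integrate by parts in the gradient term (no boundary contribution since $b_K$ vanishes on $\partial K$). Bounding the three non-gradient terms by Cauchy--Schwarz and using \eqref{eq_inv_bubble} to absorb $\|\div(b_K \br_K^h)\|_K \lesssim h_K^{-1}\|\br_K^h\|_K$ extracts $\|\br_K^h\|_K$ as a common factor. Dividing through, rescaling by $h_K/\sqrt{\zeta_K^\star}$, identifying the wavenumber prefactors through \eqref{eq_wavenumbers}, and switching back to $\br_K$ at the cost of one $\osc_K$ yields the volume bound
\begin{equation*}
\frac{h_K}{\sqrt{\zeta_K^\star}}\|\br_K\|_K \lesssim \left(1+(\kEK+\kJK)h_K\right)\!\enorm{(\BE-\BE_h,\BJ-\BJ_h)}_K + \osc_K.
\end{equation*}

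For the jump term, the key observation is that $\grad(\zeta \div \BJ) \in \BL^2(\OOm)$ (from the PDE), so that $\zeta \div \BJ$ has no jump across interior faces of $\CT_{{\rm m},h}$. I would take the vector-valued face bubble $\bw_F \eq b_F \LL_F(\jmp{\zeta \div \BJ_h})\bn_F$, pair it with $\jmp{\zeta\div\BJ_h}$ on $F$, and integrate by parts over each element of $\CT_{F,h}$, thereby expressing $\|\jmp{\zeta \div\BJ_h}\|_F$ in terms of $\|\zeta \div(\BJ-\BJ_h)\|_{\tF}$ and $\|\grad(\zeta \div(\BJ-\BJ_h))\|_{\tF}$ via \eqref{eq_norm_bubble} and \eqref{eq_ext_bubble}. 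The first quantity is absorbed directly by the energy norm. For the second, I substitute the elementwise identity $\grad(\zeta\div(\BJ-\BJ_h)) = \br_{K'} - \omega^2 \al(\BJ-\BJ_h) - i\omega(\BE-\BE_h)$ on each $K' \in \CT_{F,h}$ and recycle the volume bound of the first step on every such element. Summing over $\CT_{F,h}$ produces the patch-norms $\enorm{\cdot}_{\tK}$ and $\osc_{\tK}$, and establishes \eqref{eq_eff_grad_div}.

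The main technical obstacle will be the bookkeeping of scaling factors: the three contributions to $\br_K$ naturally involve the $\al$-, $\zeta$- and $\ee$-weighted norms, whereas $\eta_{\ggrad,\ddiv,K}$ is scaled by $1/\sqrt{\zeta_K^\star}$, and these mismatches must be rewritten, through the identities in \eqref{eq_wavenumbers}, into the wavenumber combination $\kEK+\kJK$ announced in the statement. A secondary subtlety is to apply the volume bound on every $K' \in \CT_{F,h}$ rather than just on $K$ when treating the face term, so that the residual contributions sum into patch-quantities without losing a power of $h_K$.
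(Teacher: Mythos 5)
Your plan coincides with the paper's proof in all essentials: the same residual $\br_K$ and its projected counterpart, the same element-bubble argument with integration by parts on the $\grad(\zeta\div\,\cdot)$ term and the inverse inequality \eqref{eq_inv_bubble}, and the same face-bubble treatment exploiting $\zeta\div\BJ\in H^1(\tF)$ followed by recycling the volume bound on every element of $\CT_{F,h}$. One bookkeeping caveat: carrying out the rescaling via \eqref{eq_wavenumbers} actually turns the term $\omega h_K\zeta_K^{\star\,-1/2}\|\BE-\BE_h\|_K$ into $\kPK h_K\,\omega\|\BE-\BE_h\|_{\varepsilon,K}$, so the natural prefactor is $1+(\kJK+\kPK)h_K$ (as in the paper's own intermediate estimates) rather than the combination $\kEK+\kJK$ you aim for; this does not affect the validity of the argument, since the enclosing efficiency theorem accounts for all three wavenumbers.
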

\begin{proof}
For a fixed $K \in \CT_h$, let
$\br_K \eq -\omega^2 \al \BJ_h -\grad \left (\zeta \div \BJ_h \right )
-i\omega\BE_h-i\omega\varrho_h\BKe$ and $\br_K^h \eq \varrho_h \br_K$.
Then, after integrating by parts and thanks to \eqref{eq_norm_bubble}, we have that
\begin{align*}
\|\br_K^h\|_K^2
&\lesssim
|(\br_K^h,b_K\br_K^h)_K|
\\
&=
|(\omega^2\al(\BJ-\BJ_h),b_K\br_K^h)_K
+
(\zeta\div(\BJ-\BJ_h),\div(b_K\br_K^h))_K
+
(i\omega(\BE-\BE_h),b_K\br_K^h)_K
\\
&+\;
(i\omega(\BKe-\varrho_h\BKe),b_K\br_K^h)_K|
\end{align*}
and thus, using \eqref{eq_inv_bubble}, we obtain
\begin{align*}
\frac{h_K}{\sqrt{\zeta_K^\star}}\|\br_K^h\|_K
\lesssim
\frac{\omega^2 h_K}{\sqrt{\zeta_K^\star}}\|\al(\BJ-\BJ_h)\|_K
+
\frac{h_K}{\sqrt{\zeta_K^\star}}\|\zeta\div(\BJ-\BJ_h)\|_K
+
\frac{\omega h_K}{\sqrt{\zeta_K^\star}}\|\BE-\BE_h\|_K
+
\osc_K.
\end{align*}
Recalling \eqref{eq_wavenumbers}, we get that
\begin{align}\label{tmp_eff_grad_div_K}
\frac{h_K}{\sqrt{\zeta_K^\star}}\|\br_K\|_K
\lesssim
(1+(\kJK+\kPK)h_K) \enorm{(\BE-\BE_h,\BJ-\BJ_h)}_{K}
+
\osc_K.
\end{align}
On the other hand, for $F \in \CF_K$, we set $\bw_F \eq b_F\LL_F(\jmp{\zeta\div\BJ_h}\bn_F)$.
Thanks to the fact that $\zeta \div\BJ \in H^1(\tF)$
and $b_F \in H^1_0(\tF)$, estimate \eqref{eq_norm_bubble} and integration
by parts reveal that
\begin{equation*}
\|\jmp{\zeta\div\BJ_h}\|_F^2
\lesssim
|\langle\jmp{\zeta\div\BJ_h},\bw_F\rangle_F|
=
|(\grad(\zeta\div(\BJ-\BJ_h)),\bw_F)_{\tF}
+
(\zeta\div(\BJ-\BJ_h),\div\bw_F)_{\tF}|.
\end{equation*}
Using \eqref{eq_maxwell_drude_strong}, we have that
\begin{align*}
&\|\jmp{\zeta\div\BJ_h}\|_F^2
\lesssim
\\
&
|-(\omega^2\al(\BJ-\BJ_h),\bw_F)_{\tF}
+(\zeta\div(\BJ-\BJ_h),\div\bw_F)_{\tF}
-
(i\omega(\BE-\BE_h),\bw_F)_{\tF}
+
(\br_K^h,\bw_F)_{\tF}|
\end{align*}
and using \eqref{eq_ext_bubble}
\begin{align*}
\frac{h_K^{1/2}}{\sqrt{\zeta_K^\star}}\|\jmp{\zeta\div\BJ_h}\|_F
&\lesssim
\frac{\omega^2 h_K}{\sqrt{\zeta_K^\star}}\|\al(\BJ-\BJ_h)\|_{\tF}
+
\frac{1}{\sqrt{\zeta_K^\star}}\|\zeta\div(\BJ-\BJ_h)\|_{\tF}
\\
&+\;
\frac{\omega h_K}{\sqrt{\zeta_K^\star}}\|\BE-\BE_h\|_{\tF}
+
\frac{h_K}{\sqrt{\zeta_K^\star}}\|\br_K\|_{\tF}.
\end{align*}
Then, \eqref{eq_wavenumbers} let us conclude that
\begin{align}\label{tmp_eff_grad_div_F}
\frac{h_K^{1/2}}{\sqrt{\zeta_K^\star}}\|\jmp{\zeta\div\BJ_h}\|_F
&\lesssim
(1+(\kJK+\kPK)h_K)\enorm{(\BE-\BE_h,\BJ-\BJ_h)}_{\tF}
+
\frac{h_K}{\sqrt{\zeta_K^\star}}\|\br_K\|_{\tF},
\end{align}
and \eqref{eq_eff_grad_div} follows from \eqref{tmp_eff_grad_div_K} and \eqref{tmp_eff_grad_div_F}.
\end{proof}

\begin{theorem}
\label{theorem_efficiency}
The estimate
\begin{equation}
\label{eq_efficiency}
\eta_K
\lesssim
\left(
1 + \left(\kEK+\kJK+\left(1+\frac{\cJK}{\cEK}\right)\kPK \right)h_K
\right)
\enorm{(\BE-\BE_h,\BJ-\BJ_h)}_{\tK}
+
\osc_{\CT_{K,h}}
\end{equation}
holds true for all $ K\in\CT_h $.
\end{theorem}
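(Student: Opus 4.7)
The plan is to simply assemble the four preceding lemmas. Since $\eta_K = \eta_{\ccurl,\ccurl,K} + \eta_{\ggrad,\ddiv,K} + \eta_{\ddiv,K} + \eta_{\ccurl,K}$, the estimate will follow by adding the four elementwise bounds \eqref{eq_eff_div}, \eqref{eq_eff_curl}, \eqref{eq_eff_curl_curl} and \eqref{eq_eff_grad_div} (with the convention that $\eta_{\ggrad,\ddiv,K}=\eta_{\ccurl,K}=0$ when $K\notin\CT_{{\rm m},h}$, in which case the corresponding inequalities are trivial).

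The only substantive step is to dominate the four different prefactors by a single one. Inspecting them, the multiplicative constants in front of $\enorm{(\BE-\BE_h,\BJ-\BJ_h)}_{\tK}$ are respectively $1+\kPK h_K$, $1+(\cJK/\cEK)\kPK h_K$, $1+(\kEK+(\cJK/\cEK)\kPK)h_K$ and $1+(\kEK+\kJK)h_K$. Each of these is bounded by
\begin{equation*}
1+\left(\kEK+\kJK+\left(1+\frac{\cJK}{\cEK}\right)\kPK\right)h_K,
\end{equation*}
so I would simply take the maximum. For the oscillation terms, each of \eqref{eq_eff_div}, \eqref{eq_eff_curl_curl} and \eqref{eq_eff_grad_div} contributes an $\osc_{\tK}$, which by definition equals $\osc_{\CT_{K,h}}$, and summing at most three copies preserves the $\lesssim$-bound.

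The main (and only) obstacle is purely bookkeeping: one must check that the patches $\tK$ appearing in each lemma are the same (they are, being $\operatorname{Int}(\bigcup_{K'\in\CT_{K,h}}\overline{K'})$ in all four cases), and that the prefactor in \eqref{eq_eff_curl}, which carries no $\osc_{\tK}$, does not spoil anything when merged with the others. There is no analytic work beyond a triangle inequality on the four previous lemmas.
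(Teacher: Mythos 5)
Your proposal is correct and is exactly the argument the paper intends: the theorem is stated immediately after the four efficiency lemmas with no separate proof, the implicit argument being precisely the summation of \eqref{eq_eff_div}, \eqref{eq_eff_curl}, \eqref{eq_eff_curl_curl} and \eqref{eq_eff_grad_div} with the common dominating prefactor and the identification $\osc_{\tK}=\osc_{\CT_{K,h}}$. Your bookkeeping on the patches and the convention for $K\notin\CT_{{\rm m},h}$ is accurate.
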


\section{Numerical examples}
\label{section_numerics}


\subsection{Settings}

We first present the settings and methology common to our three examples.

\subsubsection{Two-dimensional notations}

Our numerical experiments are performed in a two-dimensional setting.
We thus assume that the last component of the fields $\BE$, $\BJ$,
$\BJe$ and $\BKe$ vanishes and that the first two components only depend
on the $(\bx_1,\bx_2)$ space variables. We further assume that the
coefficients take the form
\begin{equation*}
\ee = \left (
\begin{array}{ccc}
\ee_{11} & \ee_{12} & 0
\\
\ee_{21} & \ee_{22} & 0
\\
0        & 0        & \ee_{33}
\end{array}
\right )
\quad
\cc = \left (
\begin{array}{ccc}
\cc_{11} & \cc_{12} & 0
\\
\cc_{21} & \cc_{22} & 0
\\
0        & 0        & \chi
\end{array}
\right ) \quad
\al = \left (
\begin{array}{ccc}
\al_{11} & \al_{12} & 0
\\
\al_{21} & \al_{22} & 0
\\
0        & 0        & \al_{33}
\end{array}
\right ),
\end{equation*}
leading to
\begin{equation}
\label{eq_maxwell_drude_2D}
\left \{
\begin{array}{rcll}
-\omega^2 \ee \BE + \vcurl \left (\chi \scurl \BE \right ) + i\omega \BJ
&=&
\BJe
&
\text{ in } \OO,
\\
-\omega^2 \al \BJ - \grad \left (\zeta \div \BJ\right ) - i\omega \BE
&=&
\BKe
&
\text{ in } \OOm,
\end{array}
\right .
\end{equation}
where the boldface notation now stands for two-components vectors and tensors.
As usual, the two-dimensional curl operators are given by
\begin{equation*}
\scurl \bphi = \partial_1 \bphi_2 - \partial_2 \bphi_1
\qquad
\vcurl \; \phi = (\partial_2 \phi,-\partial_1 \phi).
\end{equation*}

\subsubsection{Perfectly matched layers}

We employ perfectly matched layers to incorporate the radiation condition
into a bounded computational domain. In our examples, we assume for the
sake of simplicity that $\ee$ is diagonal and that the metallic particles
are contained into a box $\Omega_0 \eq (-L,L)^2$ for some $L > 0$. We enclose
$\Omega_0$ into a larger box $\Omega \eq (-L-\ell,L+\ell)^2$ featuring an additional
layer of size $\ell > 0$. Following \cite{monk_2003a}, we define
\begin{equation*}
d_j(\bx) \eq 1 - \frac{3}{4} i \boldsymbol 1_{|\bx_j| > L}
\end{equation*}
for $j=1$ or $2$, and construct modified coefficients
\begin{equation*}
\widetilde \ee
\eq
\left (
\begin{array}{cc}
d_2/d_1 & 0
\\
0       & d_1/d_2
\end{array}
\right )
\ee
\qquad
\widetilde \chi = \frac{\chi}{d_1 d_2}.
\end{equation*}
These new coefficients are actually unchanged in $\Omega_0$, but take
artificial values in the additional layer designed to absorb incoming
radiations without spurious reflections. In the remaining of this section,
we employ the artificial coefficients $\widetilde \ee$ and $\widetilde \chi$,
but omit the $\widetilde \cdot$ notation to ease the presentation.

\subsubsection{Incident field injection}

We consider the scattering of an incident plane wave
by metallic nanostructures. The total field $\BE^{\rm t}$ splits into
the (known) incident field $\BE^{\rm i}$ and the scattered field $\BE$ that we
numerically approximate. We decompose the computational domain as
$\OO = \OOm \cup \OOo \cup \OOp$, where
$\OOm$ corresponds to the metallic inclusions, $\OOp$ is the PML region, and
$\OOo \eq \OO \setminus {\overline{\OOm} \cup \overline{\OOp}}$. $\BE^{\rm i}$
is a solution to Maxwell's equations in $\OOo$. $\BE$ is a scattered field that
satisfies the PML equation inside $\OOp$. Finally, the total field $\BE^{\rm t}$ satisfies the Maxwell-Drude
system in $\OOo \cup \OOm$. It follows that the pair $(\BE,\BJ)$ is solution to
\eqref{eq_maxwell_drude_2D} with $\BJe \eq \bo$ and $\BKe \eq i\omega\BE^{\rm i}$.
In the forthcoming examples, we will only consider right-hand sides of this form,
where
\begin{equation*}
\BE^{\rm i}(\bx) = \bp e^{-ik \bd \cdot \bx},
\end{equation*}
where $\bp,\bd$ are two unit vectors such that $\bp \cdot \bx$ and
$k \eq \omega/c_0$.  $\bp$ and $\bd$ respectively
describe the polarization and the direction of the incident wave, while
$c_0 \eq \sqrt{\varepsilon_0\mu_0}$ is the speed of light and $k$, the
wavenumber.

\subsubsection{Coefficients}

The permittivity and permeability are set to the vacuum values
in $\OOo$, that is
\begin{equation*}
\ee = \BI \eps_0, \qquad \chi = \frac{1}{\mu_0},
\end{equation*}
with the aforementioned modification in the PML region $\OOp$. In the metal,
the coefficients $\al$ and $\zeta$ are defined from $\omegaP$, $\gamma$ and
$\vartheta_{\rm F}$ by
\begin{equation*}
\al \eq \frac{1}{\omegaP^2 \eps_0} \left(1 - \frac{\gamma}{i\omega}\right ) \BI,
\qquad
\zeta \eq \frac{3}{5} \frac{\vartheta_{\rm F}^2}{\omegaP^2 \eps_0}.
\end{equation*}
The actual values of $\omegaP$, $\gamma$ and $\vartheta_{\rm F}$ depend on the particular metal
under consideration. For gold, we have
\begin{equation*}
\omegaP \eq 1.390 \cdot 10^{16} \;\text{rad} \cdot \text{s}^{-1},
\qquad
\gamma  \eq 3.230 \cdot 10^{13} \; \text{rad} \cdot \text{s}^{-1},
\qquad
\varphi_{\rm F} \eq 1.084 \cdot 10^6 \; \text{m} \cdot \text{s}^{-1},
\end{equation*}
while for silver, we employ
\begin{equation*}
\omegaP \eq 1.339 \cdot 10^{16} \;\text{rad} \cdot \text{s}^{-1},
\qquad
\gamma  \eq 1.143 \cdot 10^{14} \; \text{rad} \cdot \text{s}^{-1},
\qquad
\varphi_{\rm F} \eq 1.465 \cdot 10^6 \; \text{m} \cdot \text{s}^{-1}.
\end{equation*}

\subsubsection{Adaptive algorithm}

In the following examples, we employ the estimator described before to
steer an adaptive mesh algorithm process. We fix once and for all the
polynomial degree $p$ and start with an initial mesh $\CT_h^{(0)}$. Then,
assuming we arrived at a mesh $\CT_h^{(\ell)}$, we solve the finite element
system associated with this mesh, and compute the associated elementwise
error estimators $\eta_K$. These estimators are in turn use to output
a new mesh $\CT_h^{(\ell+1)}$, enabling the start of new iteration.
We employ the software packages {\tt MUMPS} \cite{amestoy_duff_lexcellent_2000a}
to solve the linear systems, and {\tt MMG} \cite{mmg3d} to generate the meshes.
Algorithm \ref{algo_adaptive} describes the resulting procedure. Notice that {\tt MMG}
refines an existing mesh $\CT$ by following new local mesh sizes $\bh_{\ba}$ that are
given on the vertices $\ba$ of $\CT$. As a result, Algorithm \ref{algo_adaptive} includes
a ``translation'' between the ``element-based'' estimator $\eta_K$ and the data $\bh_{\ba}$
passed to {\tt MMG}.

\begin{algorithm}
\begin{algorithmic}[1]
\Procedure{generate\_mesh}{$\CT$, $\bh$}
\State generate $\widetilde \CT$ by calling {\tt MMG} with the input mesh $\CT$ and the vertex mesh size $\bh$
\State \Return $\widetilde \CT$
\EndProcedure
\Procedure{generate\_mesh\_sizes}{$\CT$, $\eta$, $\theta$, $\rho$}
\State Let $n_{\rm v}$ denote the number of vertices of $\CT$
\State zero initialize arrays $\bet$ and $\bh$ of size $n_{\rm v}$
\ForEach{mesh element $K \in \CT$}
\ForEach{element vertex $\ba \in \CV_K$}
\State $\bet[\ba] = \bet[\ba] + \eta_K$
\State $\bh[\ba] = \max(\bh[\ba],h_K)$
\EndFor
\EndFor
\State sort the vertices in an array ord such that $\bet[\text{ord}[j]]$ is non-decreasing
\State find the smallest integer $n \in \{1,\dots,n_{\rm v}\}$ such that
$\sum_{j=1}^{n} \bet[\text{ord}[j]]^2 \geq \theta \sum_{\ba} \bet[\ba]^2$.
\For{$j=1,\dots,n$}
\State $\bh[\text{ord}[i]] = \rho \bh[\text{ord}[i]]$
\EndFor
\State \Return $\bh$
\EndProcedure
\Procedure{adaptive\_loop}{$\CT^{(0)}$,$\ell_{\rm max}$,$\theta$,$\rho$}
\For{$\ell=0,\dots,\ell_{\rm max}$}
\State assemble the finite-element matrix associated with $\CT^{(0)}$
\State solve the linear system with {\tt MUMPS}
\State compute the estimator $\eta$
\State compute the new mesh sizes $\bh = \Call{generate\_mesh\_sizes}{\CT^{(\ell)},\eta,\theta,\rho}$
\State generate the new mesh $\CT^{(\ell+1)} = \Call{generate\_mesh}{\CT^{(\ell)},\bh}$
\EndFor
\EndProcedure
\end{algorithmic}
\caption{Adaptive loop}
\label{algo_adaptive}
\end{algorithm}

The adaptive procedure takes two additional parameters $\theta$ and $\rho$ that
controls how many elements are refined at each iteration, and how much their sizes
is reduced. In the examples below, we always select $\theta \eq 0.05$ and $\rho \eq 0.5$.
While we mean that we refine elements that contribute to $5\%$ of the total squared
error, and that these elements have their diameter divided by two.

\subsubsection{Error measurements}

The analytical solutions for the examples below are not available,
which complicates the numerical validation of the proposed error estimator.
For a given mesh and polynomial $p$, if $(\BE_h,\BJ_h)$ denotes
the computed discrete solution, we compute a ``reference'' solution
$(\widetilde \BE_h,\widetilde \BJ_h)$ on the same mesh with $\widetilde p = p+2$.
We then employ the quantities
\begin{equation*}
\xi_K \eq \enorm{(\widetilde \BE_h-\BE_h,\widetilde \BJ_h-\BJ_h)}_K
\qquad
\xi^2 \eq \sum_{K \in \CT_h} \xi_K^2
\end{equation*}
to obtain a measure of the discretization error.

\subsubsection{Comparison with uniform meshes}

We also benchmark the adaptive process against uniform meshes. To this
end, we  build for  each geometry  of interest  a sequence  of uniform
meshes  with {\tt  MMG} by  simply  requiring a  maximal allowed  mesh
size. The mesh size is chosen  so that the resulting number of degrees
of  freedom  is similar  to  the  structured  meshes produced  by  the
adaptive algorithm. This enables  to quantify the accuracy improvement
due to local refinements, since roughly the same computational cost is
then required for the structured and the unstructured meshes. To avoid
any  confusion,  we  employ  below  the  notation  $\xi_{\rm  u}$  and
$\eta_{\rm u}$ for  the error and estimator computed  with the uniform
meshes, while the  quantities $\xi_{\rm a}$ and  $\eta_{\rm a}$ relate
to the adaptive meshes.

\subsection{Gold bowtie antenna}

Our first example is a bowtie nano-antenna made of gold, as depicted on Figure
\ref{figure_bowtie_settings}. The incidence angle is $\theta = \pi/3$, and thus
$\bd = (\cos\theta,\sin\theta)$ and $\bp = (-\sin\theta,\cos\theta)$. We consider
three frequencies, namely $\omega = 0.8\omegaP$, $0.9\omegaP$ and $\omegaP$.
Figure \ref{figure_bowtie_sol} presents the reference solution computed on the
finest mesh. The case where $\omega=\omegaP$ is of particular interest: it can
been seen on Figure \ref{figure_bowtie_sol} that the desired light-focusing effect
is effectively achieved.

We start the adaptive loop with the initial mesh on the right panel of
Figure \ref{figure_bowtie_settings} and run this loop for $80$ iterations
with the polynomial degree $p=1$. Figure \ref{figure_bowtie_convergence} depicts 
the behaviour of the errors $\xi_{\rm a}$ and $\xi_{\rm u}$ plotted against
the number of degrees of freedom $N_{\rm dofs}$. The accuracy is significantly
improved on adaptive meshes for a similar number of degrees of freedom. Besides,
we observe the optimal convergence rate in $N_{\rm dofs}^{-(p+1)/2}$, which means 
that the estimator correctly steers the mesh refinement process.
Figure \ref{figure_bowtie_effectivity} shows the effectivity index of the
estimator for both adaptive and uniform meshes. The effectivity index first
oscillates before stabilizing asymptotically for fine meshes. This behaviour
is typical of non-coercive problems \cite{chaumontfrelet_vega_2020a}. It is
also in agreement with efficiency estimate \eqref{eq_efficiency} of Theorem
\ref{theorem_efficiency} which states that the estimator may become inefficient
on coarse meshes.
Finally, we present the elementwise actual error distribution and the
estimator $\eta_K$ in the central region of the mesh for $\omega = 0.8\omegaP$
in Figure \ref{figure_el_bowtie_s080}. While the scales of the left and
right panels are different, the (relative) agreement between the actual error
and the estimator is excellent.

\begin{figure}

\begin{minipage}{.45\linewidth}
\begin{tikzpicture}[scale=.3]

\draw[ultra thick,pattern=north west lines, pattern color=black]
(-3.00, 3.00) -- (-3.00,-3.00) -- (-0.25,-0.25) -- ( 0.25,-0.25) --
( 3.00,-3.00) -- ( 3.00, 3.00) -- ( 0.25, 0.25) -- (-0.25, 0.25) -- cycle;

\draw[thick] (-10,-10) rectangle (10,10);

\draw[dashed] (- 6,-10) -- (- 6, 10);
\draw[dashed] (  6,-10) -- (  6, 10);
\draw[dashed] (-10,- 6) -- ( 10,- 6);
\draw[dashed] (-10,  6) -- ( 10,  6);

\draw[<->] (-3,-3.5) -- (3,-3.5);
\draw (0,-3.5) node[anchor=north] {6 nm};

\draw[<->] (-3.5,-3) -- (-3.5,3);
\draw (-3.5,0) node[anchor=south,rotate=90] {6 nm};
\draw[<->] (-10.5,-6) -- (-10.5,6);
\draw (-10.5,0) node[anchor=south,rotate=90] {12 nm};

\draw[<->] (-10.5,-10) -- (-10.5,-6);
\draw (-10.5,-8) node[anchor=south,rotate=90] {4 nm};

\draw[<->] (-10.5, 10) -- (-10.5, 6);
\draw (-10.5, 8) node[anchor=south,rotate=90] {4 nm};

\end{tikzpicture}
\end{minipage}
\begin{minipage}{.45\linewidth}
\includegraphics[width=6cm]{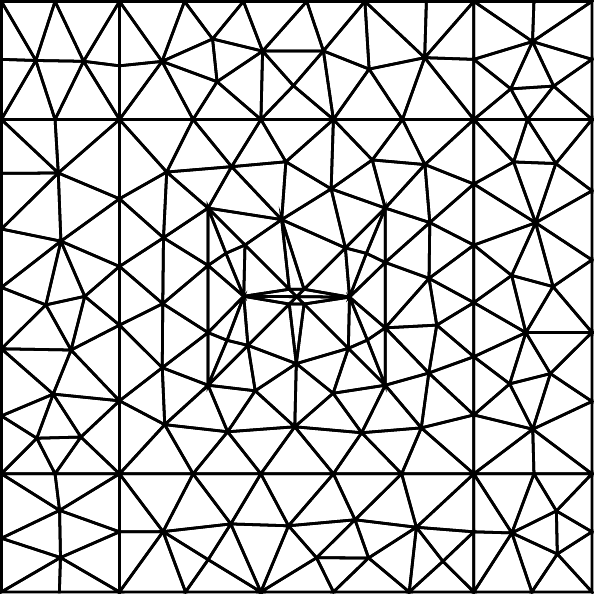}
\end{minipage}

\caption{Settings of the bowtie example (right) and initial mesh for the adaptive algorithm (left).}
\label{figure_bowtie_settings}
\end{figure}

\begin{figure}

\begin{minipage}{.45\linewidth}
\includegraphics[width=\linewidth]{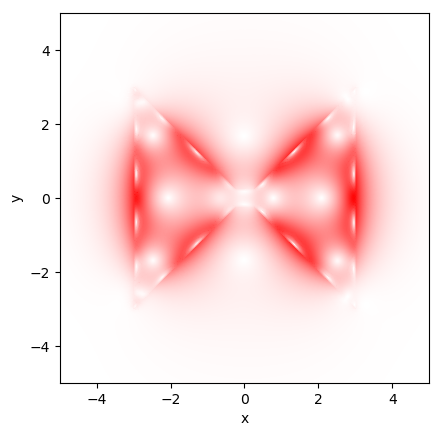}
\subcaption{$\omega = 0.8\omegaP$}
\end{minipage}
\begin{minipage}{.45\linewidth}
\includegraphics[width=\linewidth]{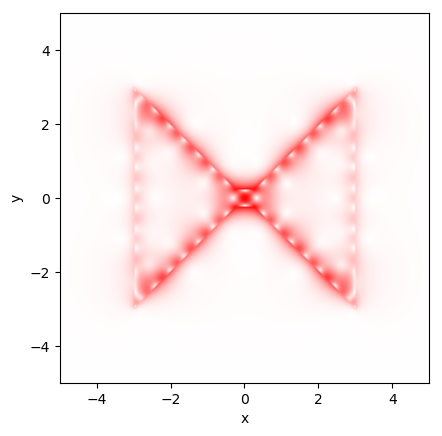}
\subcaption{$\omega = 0.9\omegaP$}
\end{minipage}

\begin{minipage}{.45\linewidth}
\includegraphics[width=\linewidth]{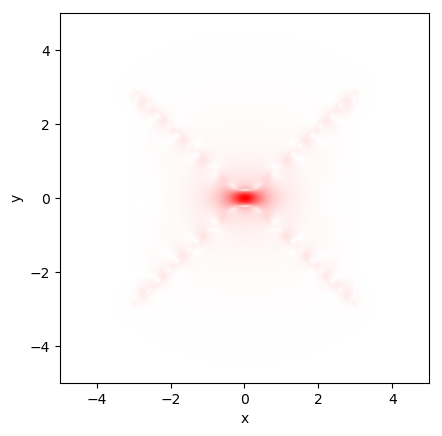}
\subcaption{$\omega = \omegaP$}
\end{minipage}
\begin{minipage}{.45\linewidth}
\caption{Electric field intensities $|\BE|$ in the bowtie experiment}
\label{figure_bowtie_sol}
\end{minipage}
\end{figure}

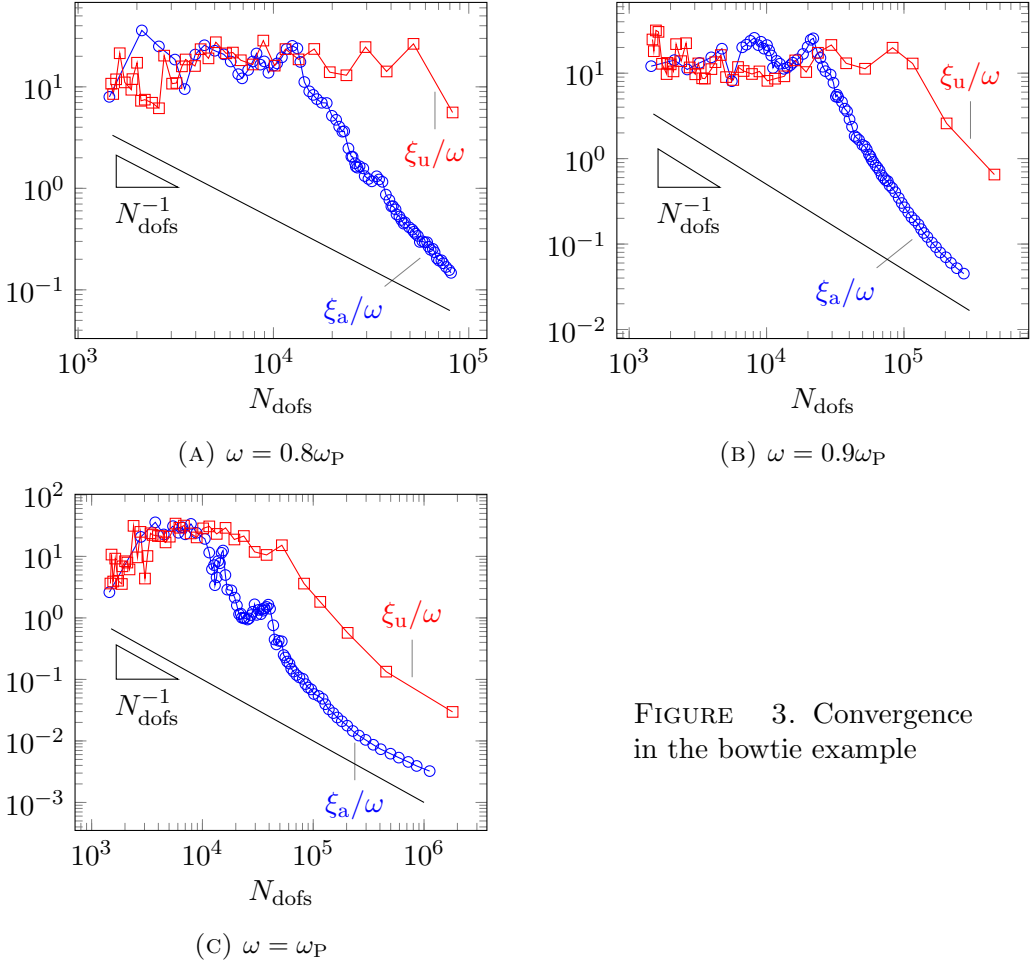
\begin{figure}

\begin{minipage}{.45\linewidth}
\begin{tikzpicture}
\begin{axis}
[
	width  = \linewidth,
	xlabel = {$N_{\rm dofs}$},
	xmode  = log,
	ymode  = log
]

\addplot[color=blue,mark=o     ] table [x = nr_dofs, y = true_error]%
{figures/bowtie/s080/curve.txt} node [pos=.95,pin=-135:{$\xi_{\rm a}/\omega$}] {};

\addplot[color=red,mark=square] table [x = nr_dofs, y = true_error]%
{figures/bowtie/s080/curve_uniform.txt} node [pos=.95,pin=-90:{$\xi_{\rm u}/\omega$}] {};


\plot[domain=1.5e3:8e4] {5e3*x^(-1.)};
\SlopeTriangle{.1}{-.15}{.45}{-1.}{$N_{\rm dofs}^{-1}$}{}

\end{axis}
\end{tikzpicture}
\subcaption{$\omega = 0.8\omegaP$}
\end{minipage}
\begin{minipage}{.45\linewidth}
\begin{tikzpicture}
\begin{axis}
[
	width  = \linewidth,
	xlabel = {$N_{\rm dofs}$},
	xmode  = log,
	ymode  = log
]

\addplot[color=blue,mark=o     ] table [x = nr_dofs, y = true_error]%
{figures/bowtie/s090/curve.txt} node [pos=.9,pin=-135:{$\xi_{\rm a}/\omega$}] {};

\addplot[color=red,mark=square] table [x = nr_dofs, y = true_error]%
{figures/bowtie/s090/curve_uniform.txt} node [pos=.95,pin=90:{$\xi_{\rm u}/\omega$}] {};


\plot[domain=1.5e3:3e5] {.5e4*x^(-1.)};
\SlopeTriangle{.1}{-.15}{.45}{-1.}{$N_{\rm dofs}^{-1}$}{}

\end{axis}
\end{tikzpicture}
\subcaption{$\omega = 0.9\omegaP$}
\end{minipage}

\begin{minipage}{.45\linewidth}
\begin{tikzpicture}
\begin{axis}
[
	width  = \linewidth,
	xlabel = {$N_{\rm dofs}$},
	xmode  = log,
	ymode  = log
]

\addplot[color=blue,mark=o     ] table [x = nr_dofs, y = true_error]%
{figures/bowtie/s100/curve.txt} node [pos=.9,pin=-90:{$\xi_{\rm a}/\omega$}] {};

\addplot[color=red,mark=square] table [x = nr_dofs, y = true_error]%
{figures/bowtie/s100/curve_uniform.txt} node [pos=.95,pin=90:{$\xi_{\rm u}/\omega$}] {};


\plot[domain=1.5e3:1e6] {1e3*x^(-1.)};
\SlopeTriangle{.1}{-.15}{.45}{-1.}{$N_{\rm dofs}^{-1}$}{}

\end{axis}
\end{tikzpicture}
\subcaption{$\omega = \omegaP$}
\end{minipage}
\begin{minipage}{.45\linewidth}
\caption{Convergence in the bowtie example}
\label{figure_bowtie_convergence}
\end{minipage}
\end{figure}

\begin{figure}

\begin{minipage}{.45\linewidth}
\begin{tikzpicture}
\begin{axis}
[
	width  = \linewidth,
	xlabel = {$N_{\rm dofs}$},
	xmode  = log,
	ymode  = log,
	ymax   = 1e3,
	ymin   = 4
]

\addplot[color=blue,mark=o     ] table [x = nr_dofs, y expr = \thisrow{esti_error}/\thisrow{true_error}]%
{figures/bowtie/s080/curve.txt} node [pos=.5,pin=180:{$\eta_{\rm a}/\xi_{\rm a}$}] {};

\addplot[color=red,mark=square] table [x = nr_dofs, y expr = \thisrow{esti_error}/\thisrow{true_error}]%
{figures/bowtie/s080/curve_uniform.txt} node [pos=.8,pin=45:{$\eta_{\rm a}/\xi_{\rm u}$}] {};

\end{axis}
\end{tikzpicture}
\subcaption{$\omega = 0.8\omegaP$}
\end{minipage}
\begin{minipage}{.45\linewidth}
\begin{tikzpicture}
\begin{axis}
[
	width  = \linewidth,
	xlabel = {$N_{\rm dofs}$},
	xmode  = log,
	ymode  = log,
	ymax   = 1e3,
	ymin   = 4
]

\addplot[color=blue,mark=o     ] table [x = nr_dofs, y expr = \thisrow{esti_error}/\thisrow{true_error}]%
{figures/bowtie/s090/curve.txt} node [pos=.6,pin=180:{$\eta_{\rm a}/\xi_{\rm a}$}] {};

\addplot[color=red,mark=square] table [x = nr_dofs, y expr = \thisrow{esti_error}/\thisrow{true_error}]%
{figures/bowtie/s090/curve_uniform.txt} node [pos=.7,pin=45:{$\eta_{\rm a}/\xi_{\rm u}$}] {};

\end{axis}
\end{tikzpicture}
\subcaption{$\omega = 0.9\omegaP$}
\end{minipage}

\begin{minipage}{.45\linewidth}
\begin{tikzpicture}
\begin{axis}
[
	width  = \linewidth,
	xlabel = {$N_{\rm dofs}$},
	xmode  = log,
	ymode  = log,
	ymax   = 1e3,
	ymin   = 4
]

\addplot[color=blue,mark=o     ] table [x = nr_dofs, y expr = \thisrow{esti_error}/\thisrow{true_error}]%
{figures/bowtie/s100/curve.txt} node [pos=.45,pin=180:{$\eta_{\rm a}/\xi_{\rm a}$}] {};

\addplot[color=red,mark=square] table [x = nr_dofs, y expr = \thisrow{esti_error}/\thisrow{true_error}]%
{figures/bowtie/s100/curve_uniform.txt} node [pos=.95,pin=90:{$\eta_{\rm a}/\xi_{\rm u}$}] {};

\end{axis}
\end{tikzpicture}
\subcaption{$\omega = \omegaP$}
\end{minipage}
\begin{minipage}{.45\linewidth}
\caption{Effectivity indices in the bowtie example}
\label{figure_bowtie_effectivity}
\end{minipage}
\end{figure}
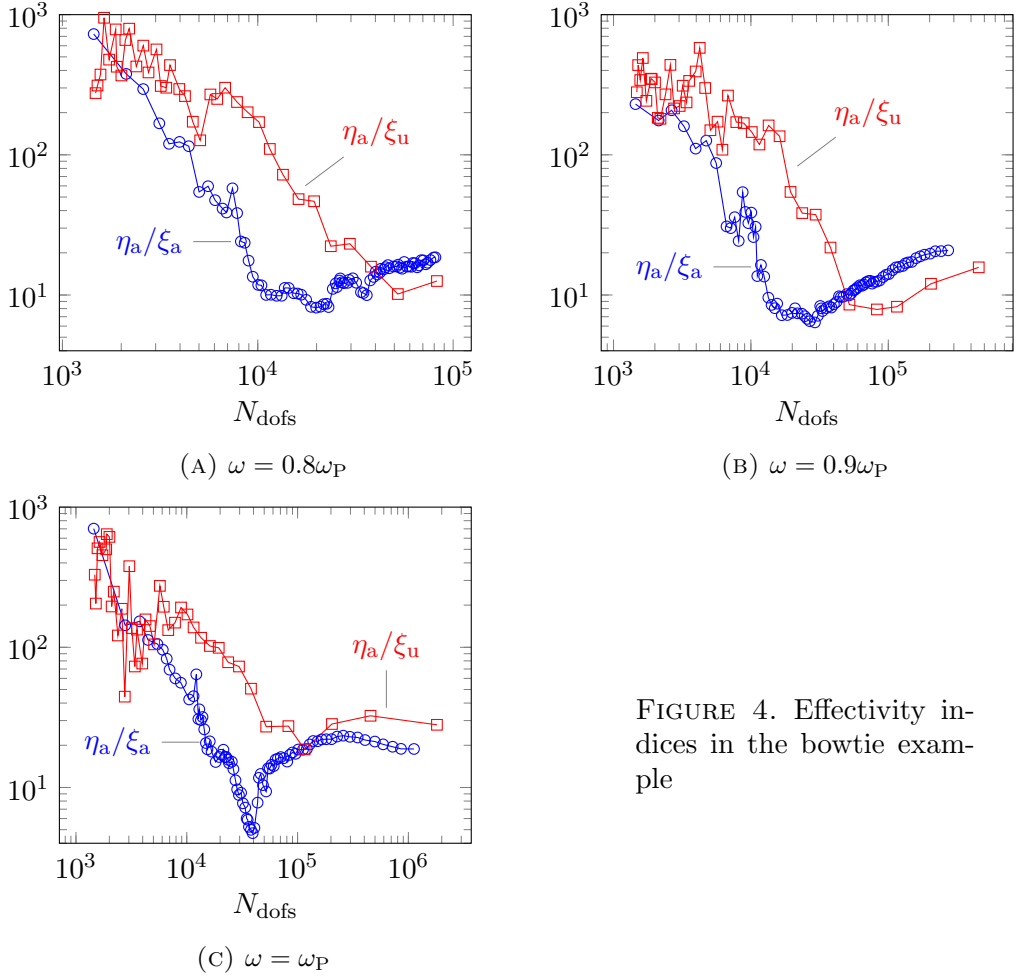

\begin{figure}

\begin{minipage}{.45\linewidth}
\includegraphics[width=\linewidth]{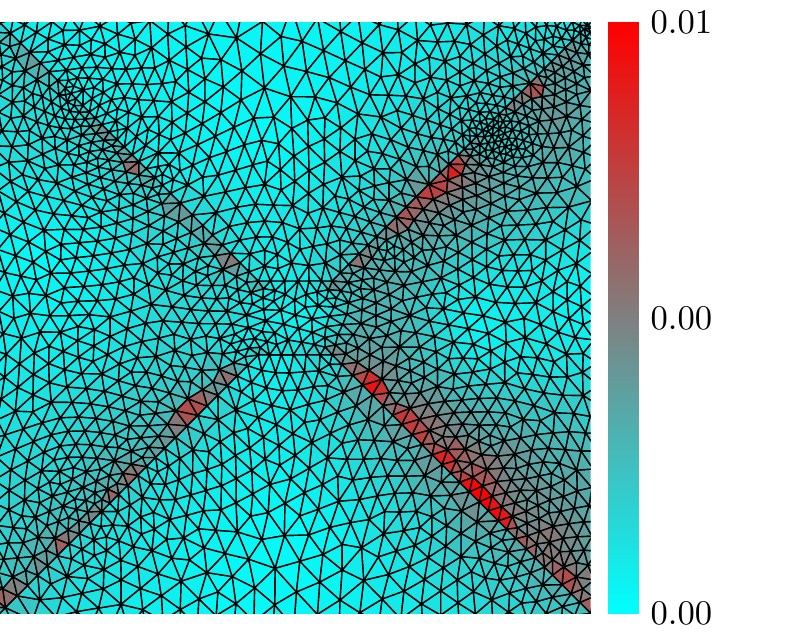}
\end{minipage}
\begin{minipage}{.45\linewidth}
\includegraphics[width=\linewidth]{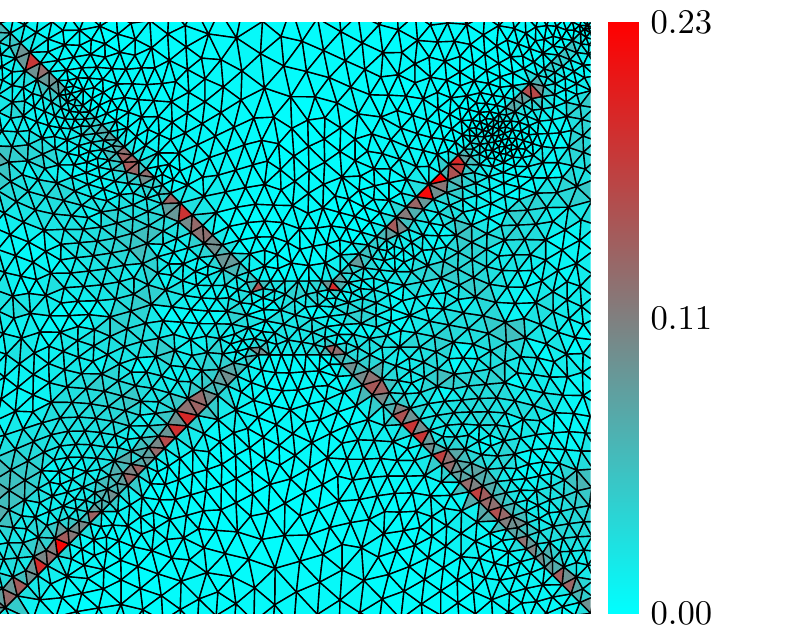}
\end{minipage}
\caption{Actual (left panel) and estimated (right panel) errors at the iteration
\#80 of the adaptive algorithm for the bowtie antenna example with $\omega = 0.8\omegaP$.}
\label{figure_el_bowtie_s080}
\end{figure}

\subsection{Silver nanotip}

Here, we model the silver  nanotip depicted  on the left  panel of
Figure \ref{figure_triangle_settings}.  We consider  three frequencies
of   interest,  namely   $\omega   \eq   0.7\omegaP$,  $\omegaP$   and
$1.3\omegaP$.  In every  case, we  select an  incident planewave  with
direction $\bd = (1,0)$, and polarization  $\bp = (0,1)$, and we begin
the adaptive  algorithm on the  initial mesh represented in  the right
panel of Figure  \ref{figure_triangle_settings}. The discrete solution
is computed  with a polynomial  degree $p=2$  and we run  the adaptive
loop for 50 iterations. The reference solutions computed on the finest
meshes are presented in Figure \ref{figure_triangle_sol}.

Figure \ref{figure_triangle_convergence} shows the convergence history
for  adaptive  and uniform  meshes.  The  adaptive meshes  drastically
improve the accuracy,  and yield the optimal  convergence rate $N_{\rm
  dofs}^{-(p+1)/2}$.   The   case  where  $\omega  =   0.7\omegaP$  is
particularly  instructive,  since   uniform  meshes  clearly  converge
suboptimally.   We  present   the   effectivity   indices  in   Figure
\ref{figure_triangle_effectivity}. As previously  stated, we observe a
usual behaviour, which is in agreement with previous works and our key
theoretical   results.    In  Figure   \ref{fig_el_nanotip_s130},   we
represent the  elementwise error distribution  and the estimator  in a
neighborhood  of  the  nanotip  at  iteration  \#25  of  the  adaptive
algorithm. We observe  a nice agreement between the  estimator and the
actual   error  for   the   selected   frequencies.  Finally,   Figure
\ref{fig_fm_nanotip_s130}  features the  final  mesh  produced by  the
adaptive algorithm at the last  iteration (\#50). The meshes are finer
close to the  inclusion, with specific refinements close  to the edges
and corners of the tip, as to be expected.

\begin{figure}

\begin{minipage}{.45\linewidth}
\begin{tikzpicture}[scale=.3]

\draw[ultra thick,pattern=north west lines, pattern color=black] (-3,-1) -- (3,0) -- (-3,1) -- cycle;

\draw[thick] (-10,-10) rectangle (10,10);

\draw[dashed] (- 6,-10) -- (- 6, 10);
\draw[dashed] (  6,-10) -- (  6, 10);
\draw[dashed] (-10,- 6) -- ( 10,- 6);
\draw[dashed] (-10,  6) -- ( 10,  6);

\draw[<->] (-3,-1.5) -- (3,-1.5);
\draw (0,-1.5) node[anchor=north] {6 nm};

\draw[<->] (-3.5,-1) -- (-3.5,1);
\draw (-3.5,0) node[anchor=south,rotate=90] {2 nm};

\draw[<->] (-10.5,-6) -- (-10.5,6);
\draw (-10.5,0) node[anchor=south,rotate=90] {12 nm};

\draw[<->] (-10.5,-10) -- (-10.5,-6);
\draw (-10.5,-8) node[anchor=south,rotate=90] {4 nm};

\draw[<->] (-10.5, 10) -- (-10.5, 6);
\draw (-10.5, 8) node[anchor=south,rotate=90] {4 nm};

\end{tikzpicture}
\end{minipage}
\begin{minipage}{.45\linewidth}
\includegraphics[width=6cm]{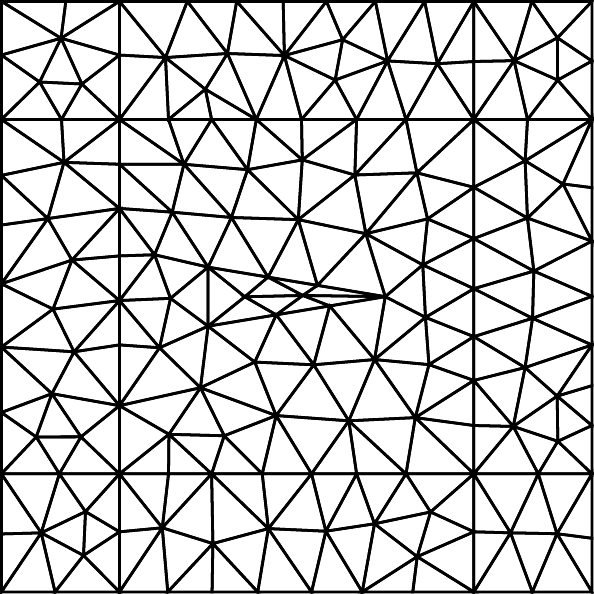}
\end{minipage}

\caption{Settings of the nanotip example (right) and initial mesh for the adaptive algorithm (left).}
\label{figure_triangle_settings}
\end{figure}
 
\begin{figure}

\begin{minipage}{.45\linewidth}
\includegraphics[width=\linewidth]{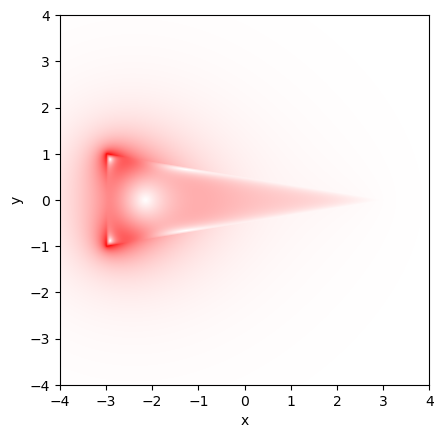}
\subcaption{$\omega = 0.7\omegaP$}
\end{minipage}
\begin{minipage}{.45\linewidth}
\includegraphics[width=\linewidth]{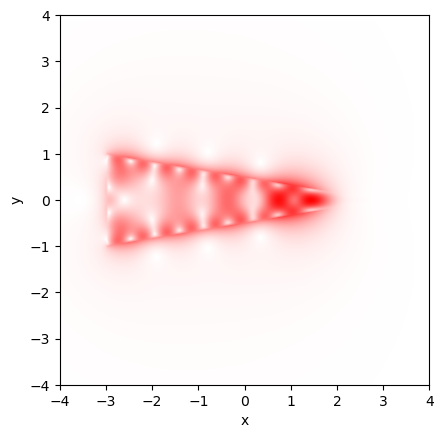}
\subcaption{$\omega = \omegaP$}
\end{minipage}

\begin{minipage}{.45\linewidth}
\includegraphics[width=\linewidth]{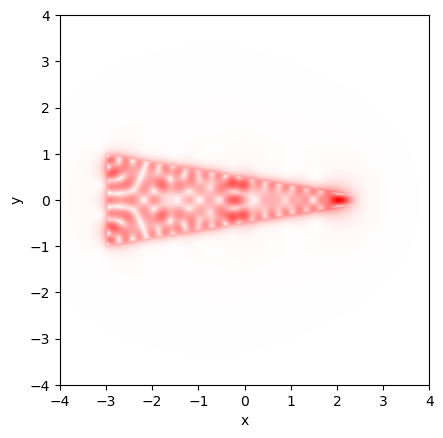}
\subcaption{$\omega = 1.3\omegaP$}
\end{minipage}
\begin{minipage}{.45\linewidth}
\caption{Electric field intensities $|\BE|$ in the nanotip experiment}
\label{figure_triangle_sol}
\end{minipage}
\end{figure}

\begin{figure}
\begin{minipage}{.45\linewidth}
\begin{tikzpicture}
\begin{axis}
[
	width  = \linewidth,
	xlabel = {$N_{\rm dofs}$},
	xmode  = log,
	ymode  = log
]

\addplot[color=blue,mark=o     ] table [x = nr_dofs, y = true_error]%
{figures/triangle/s070/curve.txt} node [pos=.5,pin=-90:{$\xi_{\rm a}/\omega$}] {};

\addplot[color=red,mark=square] table [x = nr_dofs, y = true_error]%
{figures/triangle/s070/curve_uniform.txt} node [pos=.8,pin=90:{$\xi_{\rm u}/\omega$}] {};


\plot[domain=3.e3:1e5] {1e4*x^(-3./2.)};

\SlopeTriangle{.1}{-.15}{.25}{-3./2.}{$N_{\rm dofs}^{-3/2}$}{}

\end{axis}
\end{tikzpicture}
\subcaption{$\omega = 0.7\omegaP$}
\end{minipage}
\begin{minipage}{.45\linewidth}
\begin{tikzpicture}
\begin{axis}
[
	width  = \linewidth,
	xlabel = {$N_{\rm dofs}$},
	xmode  = log,
	ymode  = log
]

\addplot[color=blue,mark=o     ] table [x = nr_dofs, y = true_error]%
{figures/triangle/s100/curve.txt} node [pos=.8,pin=-90:{$\xi_{\rm a}/\omega$}] {};

\addplot[color=red,mark=square] table [x = nr_dofs, y = true_error]%
{figures/triangle/s100/curve_uniform.txt} node [pos=.9,pin=-90:{$\xi_{\rm u}/\omega$}] {};


\plot[domain=3.e3:9e4] {8e4*x^(-3./2.)};

\SlopeTriangle{.1}{-.15}{.25}{-3./2.}{$N_{\rm dofs}^{-3/2}$}{}

\end{axis}
\end{tikzpicture}
\subcaption{$\omega = \omegaP$}
\end{minipage}

\begin{minipage}{.45\linewidth}
\begin{tikzpicture}
\begin{axis}
[
	width  = \linewidth,
	xlabel = {$N_{\rm dofs}$},
	xmode  = log,
	ymode  = log
]

\addplot[color=blue,mark=o     ] table [x = nr_dofs, y = true_error]%
{figures/triangle/s130/curve.txt} node [pos=.8,pin=-90:{$\xi_{\rm a}/\omega$}] {};

\addplot[color=red,mark=square ] table [x = nr_dofs, y = true_error]%
{figures/triangle/s130/curve_uniform.txt} node [pos=.9,pin=-90:{$\xi_{\rm u}/\omega$}] {};


\plot[domain=3.e3:3e5] {5e4*x^(-3./2.)};

\SlopeTriangle{.1}{-.15}{.3}{-3./2.}{$N_{\rm dofs}^{-3/2}$}{}

\end{axis}
\end{tikzpicture}
\subcaption{$\omega = 1.3\omegaP$}
\end{minipage}
\begin{minipage}{.45\linewidth}
\caption{Convergence in the nanotip example}
\label{figure_triangle_convergence}
\end{minipage}
\end{figure}
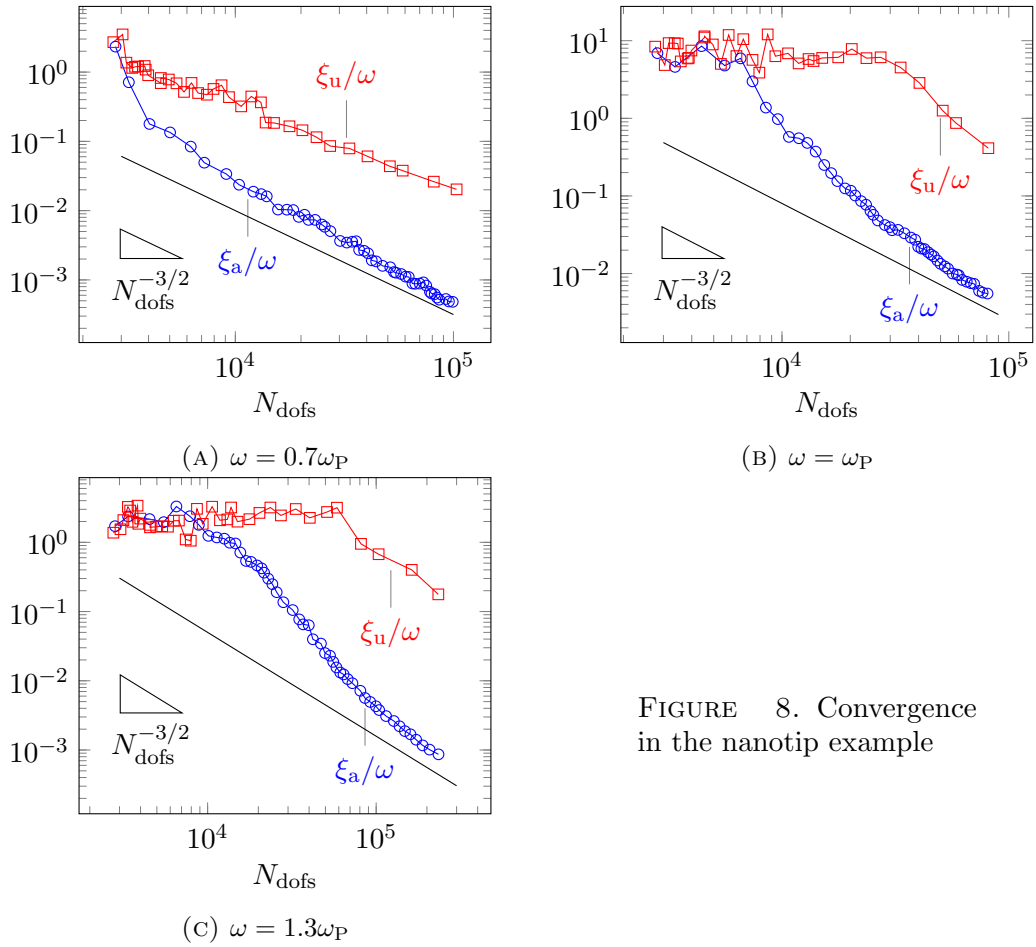

\begin{figure}
\begin{minipage}{.45\linewidth}
\begin{tikzpicture}
\begin{axis}
[
	width  = \linewidth,
	xlabel = {$N_{\rm dofs}$},
	xmode  = log,
	ymode  = log
]

\addplot[color=blue,mark=o     ] table [x = nr_dofs, y expr = \thisrow{esti_error}/\thisrow{true_error}]%
{figures/triangle/s070/curve.txt} node [pos=.3,pin=-90:{$\eta_{\rm a}/\xi_{\rm a}$}] {};

\addplot[color=red,mark=square] table [x = nr_dofs, y expr = \thisrow{esti_error}/\thisrow{true_error}]%
{figures/triangle/s070/curve_uniform.txt} node [pos=.8,pin=90:{$\eta_{\rm u}/\xi_{\rm u}$}] {};

\end{axis}
\end{tikzpicture}
\subcaption{$\omega = 0.7\omegaP$}
\end{minipage}
\begin{minipage}{.45\linewidth}
\begin{tikzpicture}
\begin{axis}
[
	width  = \linewidth,
	xlabel = {$N_{\rm dofs}$},
	xmode  = log,
	ymode  = log
]

\addplot[color=blue,mark=o     ] table [x = nr_dofs, y expr = \thisrow{esti_error}/\thisrow{true_error}]%
{figures/triangle/s100/curve.txt} node [pos=.85,pin=90:{$\eta_{\rm a}/\xi_{\rm a}$}] {};

\addplot[color=red,mark=square] table [x = nr_dofs, y expr = \thisrow{esti_error}/\thisrow{true_error}]%
{figures/triangle/s100/curve_uniform.txt} node [pos=.7,pin=0:{$\eta_{\rm u}/\xi_{\rm u}$}] {};

\end{axis}
\end{tikzpicture}
\subcaption{$\omega = \omegaP$}
\end{minipage}

\begin{minipage}{.45\linewidth}
\begin{tikzpicture}
\begin{axis}
[
	width  = \linewidth,
	xlabel = {$N_{\rm dofs}$},
	xmode  = log,
	ymode  = log
]

\addplot[color=blue,mark=o     ] table [x = nr_dofs, y expr = \thisrow{esti_error}/\thisrow{true_error}]%
{figures/triangle/s130/curve.txt} node [pos=.9,pin=90:{$\eta_{\rm a}/\xi_{\rm a}$}] {};

\addplot[color=red,mark=square] table [x = nr_dofs, y expr = \thisrow{esti_error}/\thisrow{true_error}]%
{figures/triangle/s130/curve_uniform.txt} node [pos=.65,pin=0:{$\eta_{\rm u}/\xi_{\rm u}$}] {};

\end{axis}
\end{tikzpicture}
\subcaption{$\omega = 1.3\omegaP$}
\end{minipage}
\begin{minipage}{.45\linewidth}
\caption{Effectivity in the nanotip example}
\label{figure_triangle_effectivity}
\end{minipage}
\end{figure}
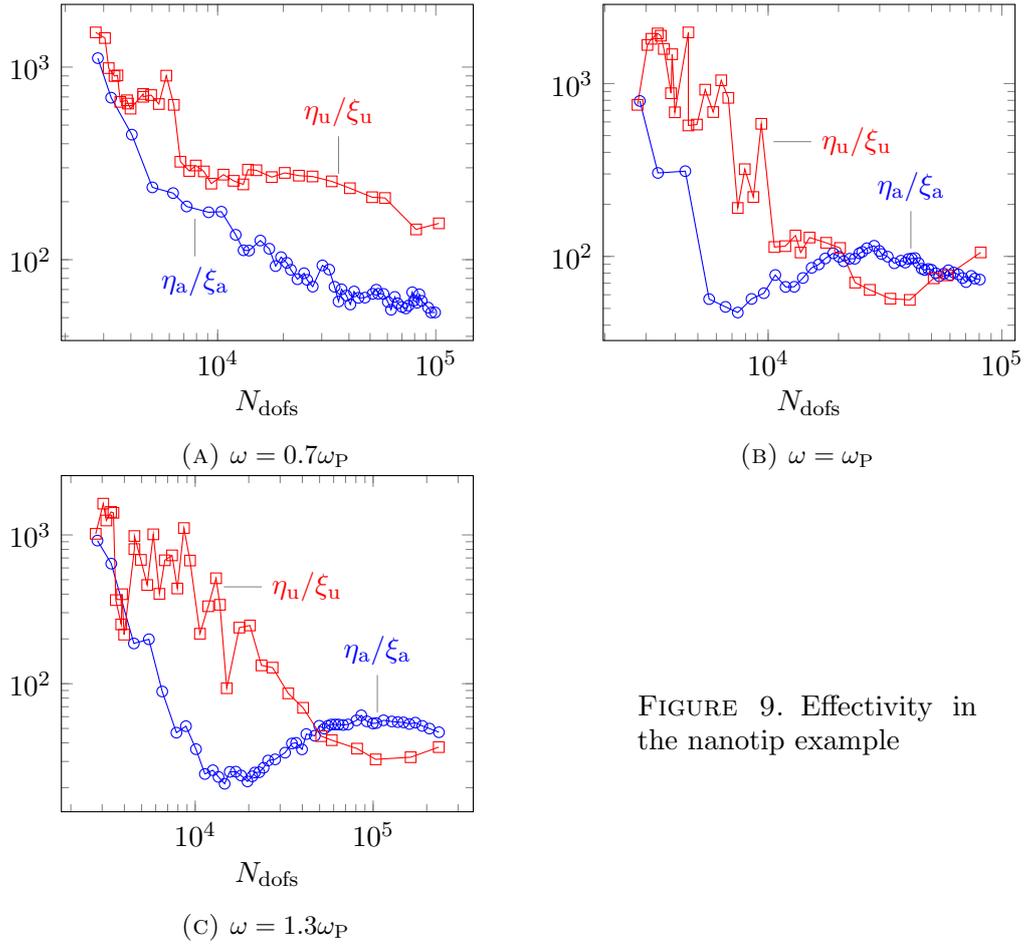

\begin{figure}
\centering
\includegraphics[width=0.75\linewidth]{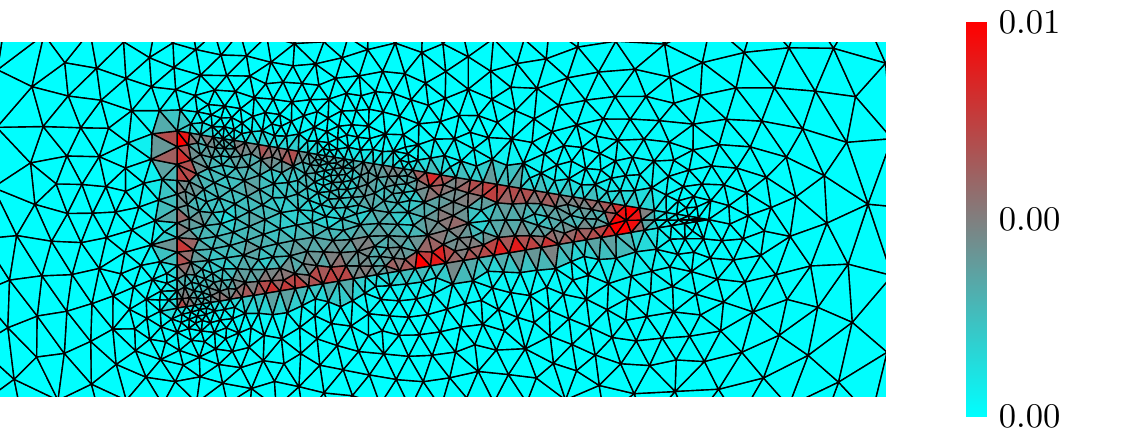}
\includegraphics[width=0.75\linewidth]{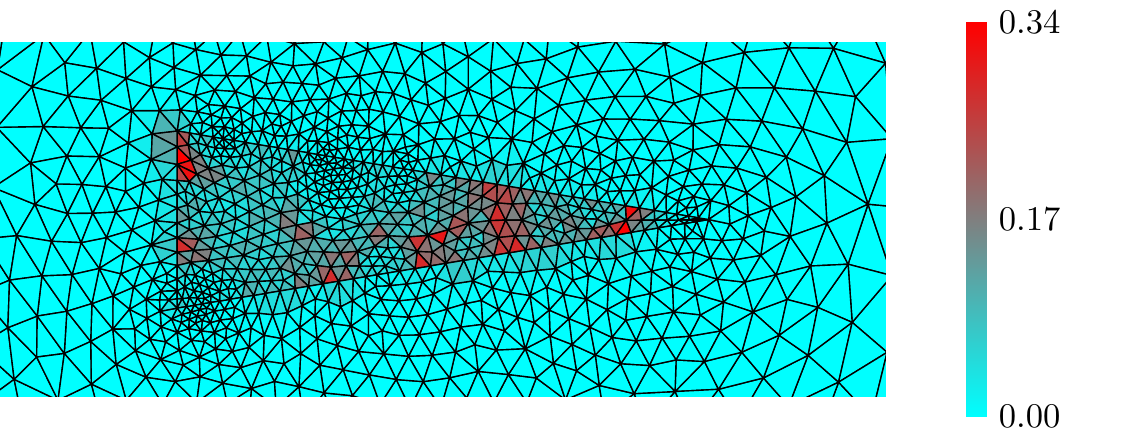}
\caption{Actual (top panel) and estimated (bottom panel) errors at the iteration
\#25 of the adaptive algorithm for the nanotip example with $\omega = 1.3\omegaP$.}
\label{fig_el_nanotip_s130}
\end{figure}

\begin{figure}
\centering
\begin{minipage}{.35\linewidth}
\includegraphics[width=\linewidth]{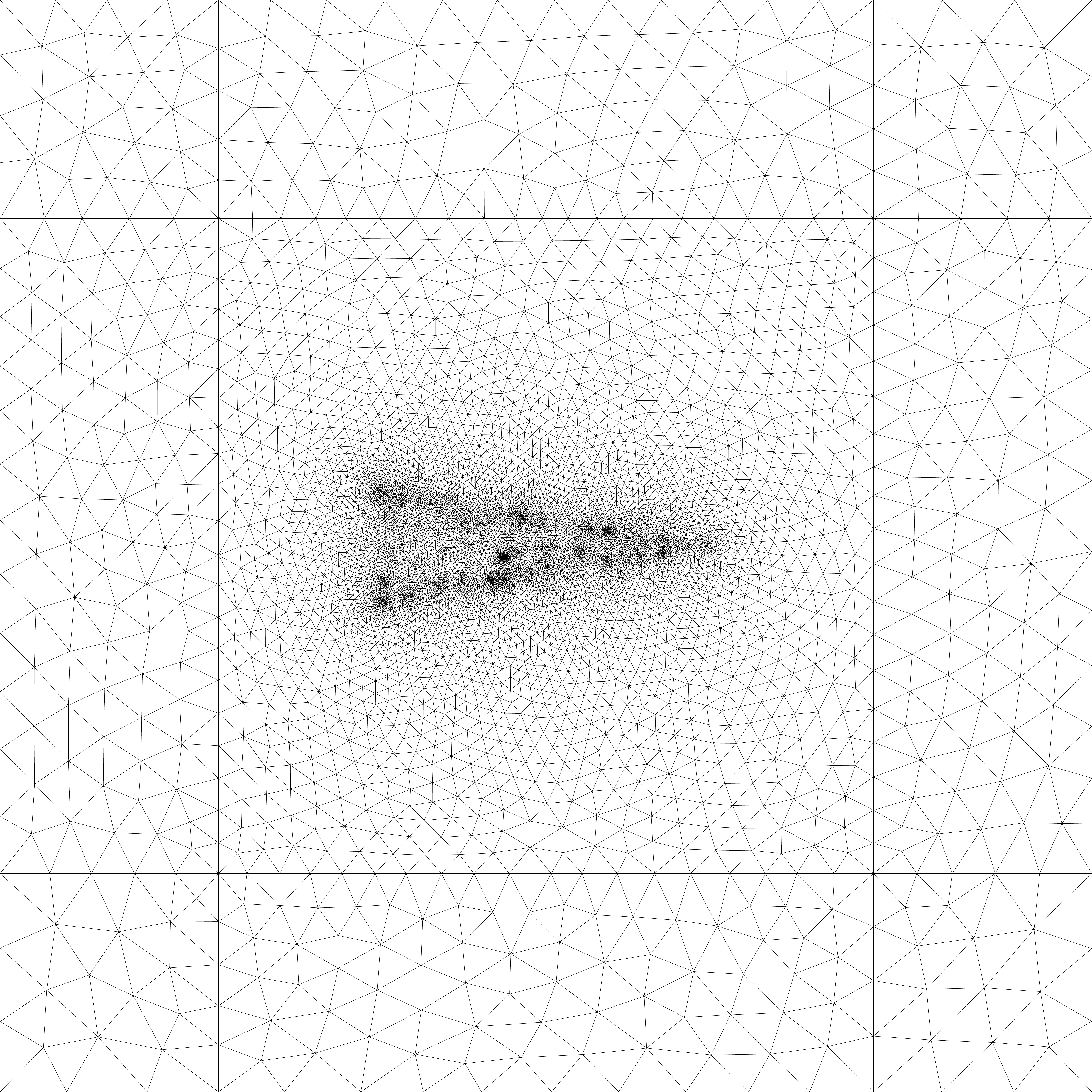}
\end{minipage}
\begin{minipage}{.55\linewidth}
\includegraphics[width=\linewidth]{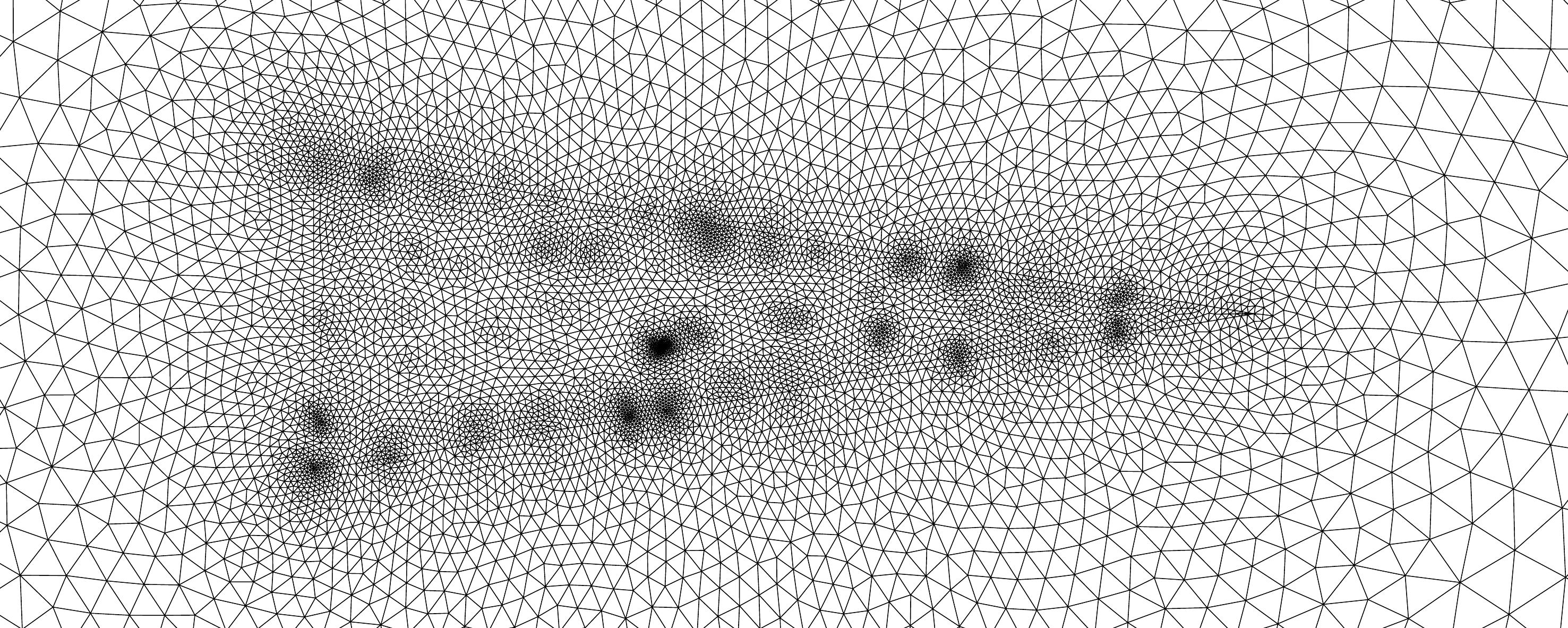}
\end{minipage}

\caption{Computational mesh at the \#50 iteration of the adaptive algorithm for
the nanotip example with $\omega = 1.3\omegaP$. The right panel presents a focus
on the inclusion.}
\label{fig_fm_nanotip_s130}
\end{figure}

\subsection{Gold V-groove channel}

The last example is a section of a ``V-groove'' channel depicted in Figure
\ref{figure_groove_settings}. The incidence angle is again $\theta = \pi/3$,
with $\bd = (\cos\theta,\sin\theta)$ and $\bp = (-\sin\theta,\cos\theta)$.
The reference solutions produced on the finest
meshes are presented in Figure \ref{figure_groove_sol} for $\omega=0.8\omegaP$,
$0.9\omegaP$ and $\omegaP$. The desired behaviour is observed in the case
$\omega=0.9\omegaP$ where the electric field is localized in the ``V'' cavity,
which can be used to design a waveguide along the transverse direction. We run the
adaptive loop for 50 iterations starting with the initial mesh of Figure
\ref{figure_groove_settings} and $p=3$.

As in the other experiments, Figure \ref{figure_groove_convergence} presents
the behaviour of the actual error against the number of degrees of freedom,
and we observe a large accuracy enhancement on adaptive meshes, together
with an optimal convergence rate. The effectivity indices are represented on
Figure \ref{figure_groove_effectivity}. They exhibit
a nicer behaviour than in the previous experiments. This is linked to the
fact that a higher polynomial degree is employed with similar starting
mesh sizes, which shorten the ``pre-asymptotic regime'' where the reliability
constant may depend on the mesh size. Figure \ref{figure_el_groove_s090} shows
the actual and estimated error distributions are very similar, again
illustrating the quality of the proposed estimator.

\input{figures/groove/settings}
\begin{figure}

\begin{minipage}{.45\linewidth}
\includegraphics[width=\linewidth]{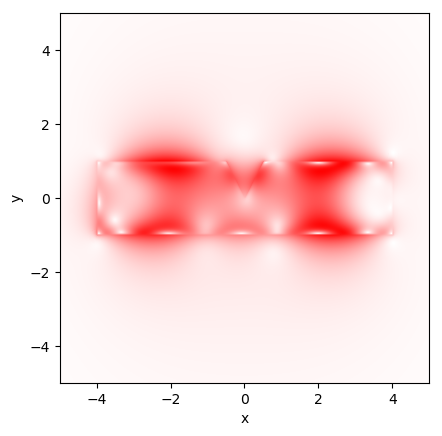}
\subcaption{$\omega = 0.8\omegaP$}
\end{minipage}
\begin{minipage}{.45\linewidth}
\includegraphics[width=\linewidth]{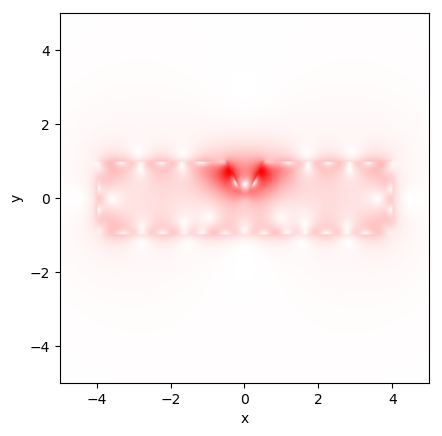}
\subcaption{$\omega = 0.9\omegaP$}
\end{minipage}

\begin{minipage}{.45\linewidth}
\includegraphics[width=\linewidth]{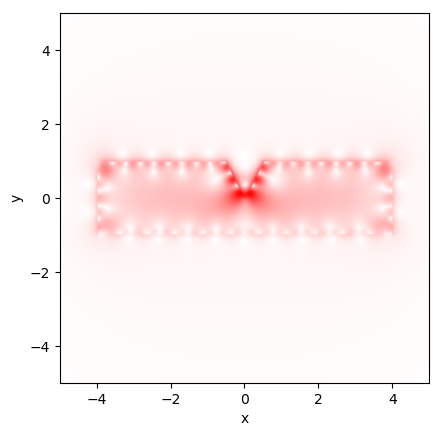}
\subcaption{$\omega = \omegaP$}
\end{minipage}
\begin{minipage}{.45\linewidth}
\caption{Electric field intensities $|\BE|$ in the V-groove experiment}
\label{figure_groove_sol}
\end{minipage}
\end{figure}

\begin{figure}
\begin{minipage}{.45\linewidth}
\begin{tikzpicture}
\begin{axis}
[
	xlabel = {$N_{\rm dofs}$},
	width=\linewidth,
	xmode  = log,
	ymode  = log
]

\addplot[color=blue,mark=o     ] table [x = nr_dofs, y = true_error]%
{figures/groove/s080/curve.txt} node [pos=.8,pin=-90:{$\xi_{\rm a}/\omega$}] {};

\addplot[color=red,mark=square] table [x = nr_dofs, y = true_error]%
{figures/groove/s080/curve_uniform.txt} node [pos=.8,pin=90:{$\xi_{\rm u}/\omega$}] {};


\plot[domain=1.8e4:1.2e5] {3.e7*x^(-2.)};
\SlopeTriangle{.1}{-.15}{.2}{-2.}{$N_{\rm dofs}^{-2}$}{}

\end{axis}
\end{tikzpicture}
\subcaption{$\omega = 0.8\omegaP$}
\end{minipage}
\begin{minipage}{.45\linewidth}
\begin{tikzpicture}
\begin{axis}
[
	xlabel = {$N_{\rm dofs}$},
	width=\linewidth,
	xmode  = log,
	ymode  = log
]

\addplot[color=blue,mark=o     ] table [x = nr_dofs, y = true_error]%
{figures/groove/s090/curve.txt} node [pos=.7,pin=-90:{$\xi_{\rm a}/\omega$}] {};

\addplot[color=red,mark=square] table [x = nr_dofs, y = true_error]%
{figures/groove/s090/curve_uniform.txt} node [pos=.9,pin=90:{$\xi_{\rm u}/\omega$}] {};


\plot[domain=1.8e4:1.e5] {5.e7*x^(-2.)};
\SlopeTriangle{.1}{-.15}{.2}{-2.}{$N_{\rm dofs}^{-2}$}{}

\end{axis}
\end{tikzpicture}
\subcaption{$\omega = 0.9\omegaP$}
\end{minipage}

\begin{minipage}{.45\linewidth}
\begin{tikzpicture}
\begin{axis}
[
	width=\linewidth,
	xlabel = {$N_{\rm dofs}$},
	xmode  = log,
	ymode  = log
]

\addplot[color=blue,mark=o     ] table [x = nr_dofs, y = true_error]%
{figures/groove/s100/curve.txt} node [pos=.8,pin=-90:{$\xi_{\rm a}/\omega$}] {};

\addplot[color=red,mark=square] table [x = nr_dofs, y = true_error]%
{figures/groove/s100/curve_uniform.txt} node [pos=.8,pin=-90:{$\xi_{\rm u}/\omega$}] {};


\plot[domain=1.8e4:1.e5] {5.e7*x^(-2.)};
\SlopeTriangle{.1}{-.15}{.2}{-2.}{$N_{\rm dofs}^{-2}$}{}

\end{axis}
\end{tikzpicture}
\subcaption{$\omega = \omegaP$}
\end{minipage}
\begin{minipage}{.45\linewidth}
\caption{Convergence history of the adaptive algorithm for the V-groove example}
\label{figure_groove_convergence}
\end{minipage}
\end{figure}

\begin{figure}
\begin{minipage}{.45\linewidth}
\begin{tikzpicture}
\begin{axis}
[
	xlabel = {$N_{\rm dofs}$},
	width=\linewidth,
	xmode  = log,
	ymode  = log,
	ymax   = 1e3,
	ymin   = 10
]

\addplot[color=blue,mark=o     ] table [x = nr_dofs, y expr = \thisrow{esti_error}/\thisrow{true_error}]%
{figures/groove/s080/curve.txt} node [pos=.6,pin=90:{$\eta_{\rm a}/\xi_{\rm a}$}] {};

\addplot[color=red,mark=square ] table [x = nr_dofs, y expr = \thisrow{esti_error}/\thisrow{true_error}]%
{figures/groove/s080/curve_uniform.txt} node [pos=.6,pin=-90:{$\eta_{\rm u}/\xi_{\rm u}$}] {};

\end{axis}
\end{tikzpicture}
\subcaption{$\omega = 0.8\omegaP$}
\end{minipage}
\begin{minipage}{.45\linewidth}
\begin{tikzpicture}
\begin{axis}
[
	xlabel = {$N_{\rm dofs}$},
	width=\linewidth,
	xmode  = log,
	ymode  = log,
	ymax   = 1e3,
	ymin   = 10
]

\addplot[color=blue,mark=o     ] table [x = nr_dofs, y expr = \thisrow{esti_error}/\thisrow{true_error}]%
{figures/groove/s090/curve.txt} node [pos=.6,pin=90:{$\eta_{\rm a}/\xi_{\rm a}$}] {};

\addplot[color=red,mark=square ] table [x = nr_dofs, y expr = \thisrow{esti_error}/\thisrow{true_error}]%
{figures/groove/s090/curve_uniform.txt} node [pos=.6,pin=-90:{$\eta_{\rm u}/\xi_{\rm u}$}] {};

\end{axis}
\end{tikzpicture}
\subcaption{$\omega = 0.9\omegaP$}
\end{minipage}

\begin{minipage}{.45\linewidth}
\begin{tikzpicture}
\begin{axis}
[
	width=\linewidth,
	xlabel = {$N_{\rm dofs}$},
	xmode  = log,
	ymode  = log,
	ymax   = 1e3,
	ymin   = 10
]

\addplot[color=blue,mark=o     ] table [x = nr_dofs, y expr = \thisrow{esti_error}/\thisrow{true_error}]%
{figures/groove/s100/curve.txt} node [pos=.8,pin=90:{$\eta_{\rm a}/\xi_{\rm a}$}] {};

\addplot[color=red,mark=square ] table [x = nr_dofs, y expr = \thisrow{esti_error}/\thisrow{true_error}]%
{figures/groove/s100/curve_uniform.txt} node [pos=.9,pin=-90:{$\eta_{\rm u}/\xi_{\rm u}$}] {};

\end{axis}
\end{tikzpicture}
\subcaption{$\omega = \omegaP$}
\end{minipage}
\begin{minipage}{.45\linewidth}
\caption{Convergence history of the adaptive algorithm for the V-groove example}
\label{figure_groove_effectivity}
\end{minipage}
\end{figure}
\begin{figure}
\centering
\includegraphics[width=0.7\linewidth]{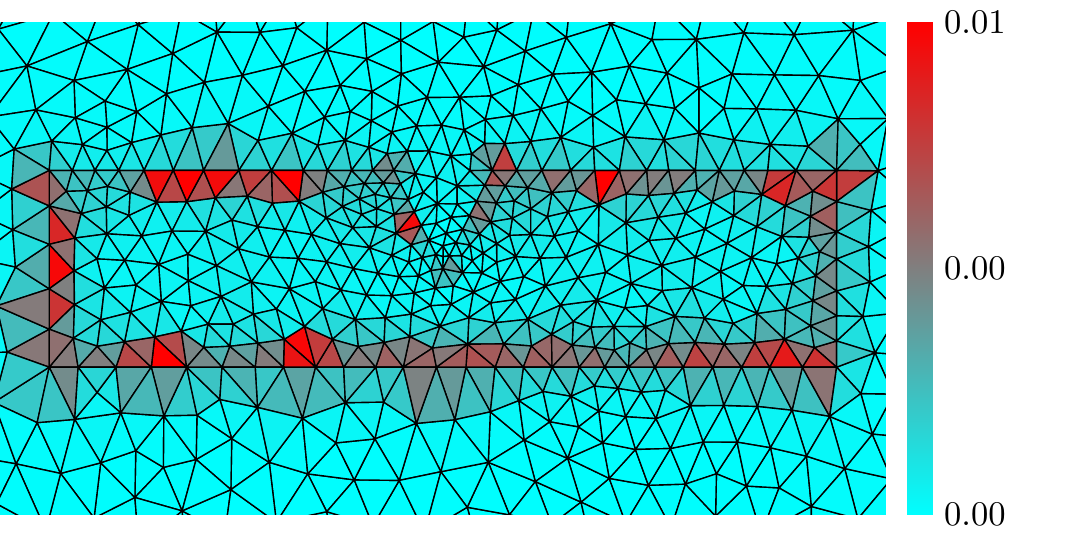}
\includegraphics[width=0.7\linewidth]{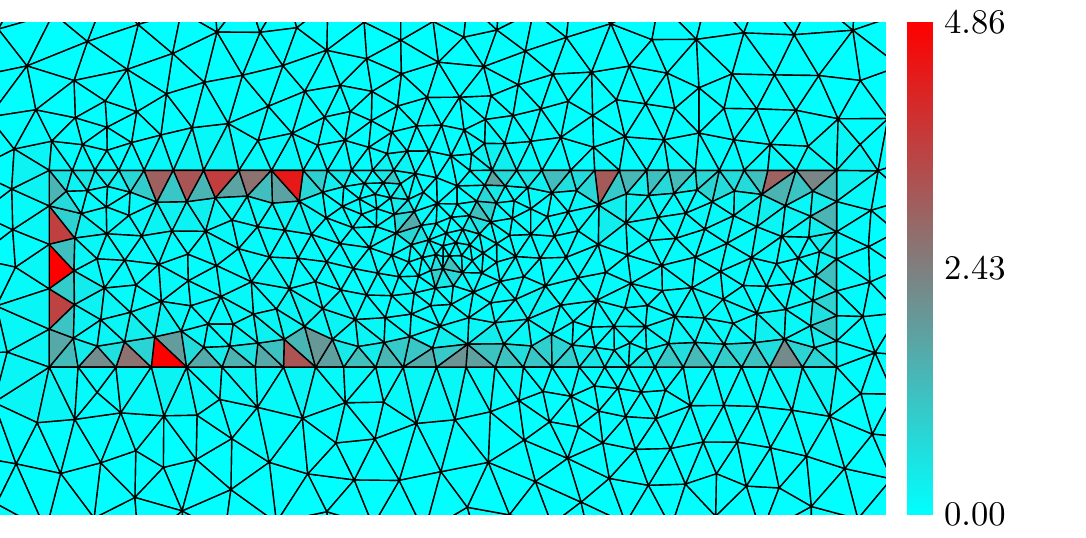}
\caption{Actual (top panel) and estimated (bottom panel) errors at the iteration
\#80 of the adaptive algorithm for the V-groove example with $\omega = 0.9\omegaP$.}
\label{figure_el_groove_s090}
\end{figure}

\section{Conclusion}
\label{section_conclusions}

We propose a novel residual-based {\it a posteriori} error
estimator for finite element discretizations of Maxwell's equations
coupled with a non-local hydrodynamic Drude model taking into account
spatial dispersion effects in metallic nanostructures. At the theoretical
level, we establish reliability  and efficiency of the estimator. We also
propose a number of relevant two-dimensional examples where the error estimator
drives an adaptive procedure. We observe the expected optimal convergence
rate meaning that the estimator correctly steers the adaptive process. Besides,
the adaptive algorithm enables substantial computational savings, as compared to
the use of uniform meshes. These preliminary results are very promising, and future
work will focus on more realistic three-dimensional benchmarks.

\bibliographystyle{amsplain}
\bibliography{bibliography.bib}

\end{document}